\definecolor{MyGreen}{rgb}{ 0.1667 ,   0.6  ,  0.0235}
\undefined \usepackage[hypertex]{hyperref} \else
\numberwithin{equation}{section}
\renewcommand\setminus{\smallsetminus}
\newcommand\IGNORE[1]{}
\def\legendre#1#2{\displaystyle\left(\!\frac{\, #1\, }{\, #2\, }\!\right)}
\newcommand{\tr}{\mathrm{tr}}
\newcommand{\modu}{\mathrm{mod}\ }
\newcommand{\Cl}{\mathrm{Cl}}
\newcommand{\Ht}{\mathrm{ht}}
\newcommand{\so}{\mathrm{SO}}
\newcommand{\mR}{\mathrm{R}}
\newcommand{\SO}{\mathrm{SO}}
\newcommand{\mscS}{\mathscr{S}}
\newcommand{\transp}[1]{{\vphantom{#1}}^{\mathit t}{#1}}
\newcommand {\absolute}[1] {\left| {#1} \right|}
\newcommand{\bash}{\backslash}
\def\peter#1{\langle #1\rangle}
\def\ov#1{\overline{#1}}
\newcommand{\refs}{\eqref}
\newcommand{\ra}{\rightarrow}
\newcommand{\ndiv}{\nmid}
\newcommand{\eps}{\varepsilon}
\newcommand{\Af}{\adele_{f}}
\newcommand{\sym}{\mathrm{Sym}}
\newcommand{\diag}{\mathrm{Diag}}
\newcommand{\x}{\mathbf{x}}
\newcommand{\Pcr}{\mathscr{P}}
\newcommand{\sign}{\mathrm{sign}}
\newcommand{\T}{\mathbf{T}}
\newcommand{\mcL}{\mathcal{L}}
\newcommand{\LduR}{\mathcal{L}^{(1)}_{2}(\Rr)}
\newcommand{\LdR}{\mathcal{L}_{2}(\Rr)}
\newcommand{\End}{\mathrm{End}}
\newcommand{\Zz}{\mathbb{Z}}
\newcommand{\F}{\mathbb{F}}
\newcommand{\Fp}{\F_{p}}
\newcommand{\disc}{\mathrm{disc}}
\newcommand{\Hh}{\mathbb{H}}
\newcommand{\C}{\mathbb{C}}
\newcommand{\vol}{\mathrm{vol}}
\newcommand{\height}{\mathrm{ht}}
\newcommand{\GL}{\mathrm{GL}}
\newcommand{\PGL}{\mathrm{PGL}}
\newcommand{\SL}{\mathrm{SL}}
\newcommand{\N}{\mathbf{N}}
\newcommand{\R}{\mathbb{R}}
\newcommand{\Rr}{\mathbb{R}}
\renewcommand{\H}{\mathbb{H}}
\newcommand{\Z}{\mathbb{Z}}
\newcommand{\Q}{\mathbb{Q}}
\newcommand{\Qq}{\mathbb{Q}}
\newcommand{\Qp}{\mathbb{Q}_{p}}
\newcommand{\Zp}{\Z_{p}}
\newcommand{\Qt}{\mathbb{Q}^\times}
\newcommand{\adele}{\mathbb{A}}
\newcommand{\Pic}{\mathrm{Pic}}
\newcommand{\Reg}{\mathrm{Reg}}
\newcommand{\stab}{\mathrm{stab}}
\newcommand{\Id}{\mathrm{Id}}
\DeclareFontFamily{OT1}{rsfs}{}
\DeclareFontShape{OT1}{rsfs}{n}{it}{<-> rsfs10}{}
\DeclareMathAlphabet{\mathscr}{OT1}{rsfs}{n}{it}
\newcommand{\Cco}{\mathscr{C}_{c}}
\newcommand{\GaG}{{\Gamma\bash G}}
\def\stacksum#1#2{{\stackrel{{\scriptstyle #1}}{{\scriptstyle #2}}}}
\newcommand{\Or}{\mathscr{O}}
\newtheorem*{prop*}{Proposition}
\newtheorem*{fact*}{Fact}
\newtheorem{Theorem}{Theorem}[section]
\newtheorem{prop}[Theorem]{Proposition}
\newtheorem{thm}[Theorem]{Theorem}
\newtheorem*{thm*}{Theorem}
\newtheorem{Lemma}[Theorem]{Lemma}
\newtheorem{Proposition}[Theorem]{Proposition}
\newtheorem{Corollary}[Theorem]{Corollary}
\theoremstyle{definition}
\newtheorem{Remark}[Theorem]{Remark}
\newtheorem{rem}[Theorem]{Remark}
\newtheorem*{rem*}{Remark}
\newcommand{\be}{\begin{equation}}
\newcommand{\ee}{\end{equation}}
\newcommand{\bea}{\begin{eqnarray*}}
\newcommand{\eea}{\end{eqnarray*}}
\newcommand{\Gd}{{\mathscr{G}_{d}}}
\newcommand{\bfx}{\mathbf{x}}
\newcommand{\bfz}{\mathbf{z}}
\newcommand{\GLdZ}{\GL_{2}(\Zz)}
\newcommand{\GLdR}{\GL_{2}(\Rr)}
\newcommand{\Md}{M_{2}}
\newcommand{\MdZ}{M_{2}(\Zz)}
\newcommand{\MdQ}{M_{2}(\Qq)}
\newcommand{\MdR}{M_{2}(\Rr)}
\newcommand{\SLdZ}{\SL_{2}(\Zz)}
\newcommand{\SLdR}{\SL_{2}(\Rr)}
\newcommand{\SOdR}{\mathrm{SO}_{2}(\Rr)}
\newcommand{\PGLd}{\mathrm{PGL}_{2}}
\newcommand{\PSL}{\mathrm{PSL}}
\newcommand{\PGLdZ}{\mathrm{PGL}_{2}(\Zz)}
\newcommand{\PSLdZ}{\mathrm{PSL}_{2}(\Zz)}
\newcommand{\PGLdR}{\mathrm{PGL}_{2}(\Rr)}
\newcommand{\PSOdR}{\mathrm{PSO}_{2}(\Rr)}
\newcommand{\PSLdR}{\mathrm{PSL}_{2}(\Rr)}
\newcommand{\mcR}{\mathcal{R}}
\newcommand{\mcT}{\mathcal{T}}
\newcommand{\bfv}{\mathbf{v}}
\newcommand{\bfw}{\mathbf{w}}
\newcommand{\obfv}{{\ov\bfv}}
\begin{document}

\title[Distribution of closed geodesics]{The distribution of closed geodesics on the modular surface, and Duke's theorem}
\author[M. Einsiedler]{Manfred Einsiedler}
\address[M. E.]{Department of Mathematics, ETH Z\"urich, R\"amistrasse 101
CH-8092 Z\"urich
Switzerland}
\email{manfred.einsiedler@math.ethz.ch}
\author[E. Lindenstrauss ]{Elon Lindenstrauss}
\address[E. L.]{The Einstein Institute of Mathematics\\
Edmond J. Safra Campus, Givat Ram, The Hebrew University of Jerusalem
Jerusalem, 91904, Israel}
\email{elon@math.huji.ac.il}
\author[Ph. Michel]{Philippe Michel}
\address[Ph. M.]{EPF Lausanne, SB-IMB-TAN, Station 8, CH-1015 Lausanne, Switzerland}
\email{philippe.michel@epfl.ch}
\author[A. Venkatesh]{Akshay Venkatesh}
\address[A. V.]{Department of Mathematics, building 380, Stanford, CA 94305, USA}
\email{akshay@math.stanford.edu}
\thanks{M.~E.~acknowledges the support by the Clay Mathematics Institute as a Research
Scholar, by the NSF (grant 0554373) and the SNF (grant 200021-127145).
E.~L.~acknowledges the support of NSF (grants
DMS-0554345 and 0800345), the ISF (grant 983/09) and the European Research Council (Advanced Research Grant 267259). Ph.~M.~was partially supported by the SNF (grant 200021-125291) and the the European Research Council (Advanced Research Grant 228304). A.~V.~was supported by the Clay Mathematics Institute and by NSF Grant DMS-0903110.}
\date{February 2011}
\begin{abstract}
We give an ergodic theoretic proof of a theorem of Duke about equidistribution of closed geodesics
on the modular surface. The proof is closely related to the work of Yu.\ Linnik and B.\ Skubenko,
who in particular proved this equidistribution under an additional congruence assumption on the discriminant.
We give a more conceptual treatment using entropy theory, and show how to use positivity of the discriminant as
a substitute for Linnik's congruence condition.
\end{abstract}
\maketitle
\setcounter{tocdepth}{1}
\tableofcontents

\section{Introduction}
A non-zero integer $d$ is called a {\em discriminant} if it can be represented in the form
$$d=b^2-4ac,\ a,b,c\in\Zz,$$
or equivalently if $d$ is the discriminant
% (i.e.\ the determinant of the Hessian matrix)    %ELME
of the binary quadratic form with integral entries
\be\label{binaryquad}q(x,y)=ax^2+bxy+cy^2.
\ee
It is easy to see that $d$ is a discriminant if and only if $d\equiv 0,1(\modu 4)$.
A discriminant $d$ is {\em fundamental} if $d$ is either square-free (in which case $d$ is congruent to $1$ modulo $4$) or $d/4$ is a square-free integer congruent to $2,3(\modu 4)$. Equivalently: $d$ is fundamental if it is the discriminant of the ring of integers of a quadratic field.

The study of integral binary quadratic forms goes back at least to the Greeks.  Significant breakthroughs were accomplished by Gauss. In his Disquitiones arithmeticae he studied the set of
$\GLdZ$-orbits of such forms, where $\GLdZ$ acts via the linear change of variables: for $\gamma=\left(\begin{array}{cc}u & v \\w & z\end{array}\right)\in\GLdZ$
\be\label{SLaction}\gamma.q(x,y)=\frac{1}{\det(\gamma)}q((x,y)\gamma)=\frac{1}{\det(\gamma)}q(ux+wy,vx+zy).
\ee
This action preserves the discriminant and Gauss proved that the set of $\GLdZ$-orbits of integral binary quadratic forms of a given discriminant is {finite},
%, which is known as Gauss reduction theory,  %ELME
see \cite[pg.\ 128]{Cassels} for an accessible and more general treatment. Let
\begin{align*}
\mR_{\disc}(d)&=\{q(x,y)=ax^2+bxy+cy^2,\ a,b,c\in\Z,\ \disc(q)=d,\ \gcd(a,b,c)=1\}\\
&\simeq \{(a,b,c)\in\Zz^3,\ \disc(a,b,c)=b^2-4ac=d,\ \gcd(a,b,c)=1\}
\end{align*}
denote the set of forms of discriminant $d$ with coprime coefficients, and let
$$[\mR_{\disc}(d)]=\GLdZ\bash\mR_{\disc}(d)$$
be the set of orbits; its cardinality is the {\em class number} and is noted $h(d)$. Gauss also showed that the set $[\mR_{\disc}(d)]$ could be given an additional structure of an abelian group (the law of composition of quadratic forms), leading to the notion of {\em class group} of quadratic forms of discriminant $d$. Nowadays these venerable and beautiful results are usually interpreted in terms of the theory of quadratic fields and ideal class groups.
% and are at the origin of some of the most striking development of modern arithmetic. %ELME
We will recall this connection below.

\subsection{Linnik and Skubenko equidistribution theorems}
In the late 50's, Linnik studied more refined properties of the set of representations $\mR_{\disc}(d)$, in particular their distribution properties.

 Let
$$V_{\disc,\pm 1}(\R)=\{(a,b,c)\in\R^3,\ b^2-4ac=\pm 1\};$$
this is a one-sheeted hyperboloid in the $+1$ case and a two-sheeted hyperboloid in the $-1$ case, and is identified with the set of real binary quadratic form with discriminant
 $\pm 1$. In both cases $V_{\disc,\pm 1}(\R)$ is invariant under the natural action of $\GLdR$ extending \refs{SLaction} and has
one  orbit.

% Let
%$$V_{\disc,\pm 1}(\R)=\{(a,b,c)\in\R^3,\ b^2-4ac=\pm 1\};$$
%this is a one sheeted hyperboloid in the $+1$ case and a two
% sheeted hyperboloid in the $-1$ case, and is identified with the set of real binary quadratic form with discriminant
% $\pm 1$. $V_{\disc,\pm 1}(\R)$ is invariant under the natural action of $\GLdR$ extending \refs{SLaction} and has
%one  $\SLdR$-orbits (equal to its connected components).

The set of representation $\mR_{\disc}(d)$ projects on $V_{\disc,\pm 1}(\Rr)$ (with $\pm1=\sign(d)$) by a homothety
$$|d|^{-1/2}\mR_{\disc}(d)\subset V_{\disc,\pm 1}(\Rr),$$
and Linnik studied how this set is distributed when $d\ra\infty$. These hyperboloids carry a natural $\GLdR$-invariant measure $\mu_{\disc,\pm1}$ defined, for any open set $\Omega\subset V_{\disc,\pm 1}(\Rr)$, as the Lebesgue measure in $\Rr^3$ of the solid cone emanating from the origin and ending at $\Omega$, i.e.\
$$\mu_{\disc,\pm}(\Omega)=\mu_{\Rr^3}(\mathcal{C}(\Omega))$$
where
$$\mathcal{C}(\Omega)=\{ r.\bfx,\ \bfx\in\Omega,\ r\in[0,1]\}.$$

%Building on his {ergodic method},
Using an original argument of ergodic theoretic flavor, Linnik \cite[Chap. V]{Linnikbook} established the following equidistribution statement for {\em negative discriminants}.

\begin{thm}[Linnik] \label{linnik-thm}
	Let $p>2$ be a fixed prime. As $d\ra-\infty$ amongst the negative discriminants such that
%$d\equiv 1(\modu p)$ (or more generally )
$\legendre{d}{p}=1$, the set  $$|d|^{-1/2}\mR_{\disc}(d)\subset V_{\disc,-1}(\Rr),$$
becomes equidistributed with respect to $\mu_{\disc,-1}$, in the following sense:
 for any two continuous compactly supported functions $\varphi_{1},\varphi_{2}$ on $V_{\disc,-1}(\Rr)$ such that
 the integral $\mu_{\disc,-1}(\varphi_{2})\not=0$ we have
$$\frac{\sum_{x\in\mR_{\disc}(d)}\varphi_{1}(|d|^{-1/2}x)}{\sum_{x\in\mR_{\disc}(d)}\varphi_{2}(|d|^{-1/2}x)}\ra \frac{\mu_{\disc,-1}(\varphi_{1})}{\mu_{\disc,-1}(\varphi_{2})}\mbox{ as } d\ra-\infty.$$
In particular, $\sum_{x\in\mR_{\disc}(d)}\varphi_{2}(|d|^{-1/2}x)\not=0$ if $d$ as above is large enough.
\end{thm}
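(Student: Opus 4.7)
The plan is to reformulate the statement as an equidistribution of CM points on the modular surface and attack it via homogeneous dynamics and measure rigidity. First, because $d<0$, every primitive triple $(a,b,c)\in\mR_{\disc}(d)$ is (up to an overall sign) positive definite, and its normalization $|d|^{-1/2}(a,b,c)$ lies on $V_{\disc,-1}(\Rr)$, which decomposes into two $\SLdR$-orbits, each identified with the hyperbolic plane $\Hh\simeq\SLdR/\SOdR$. Under this identification $\mu_{\disc,-1}$ becomes the hyperbolic area, and the $\GLdZ$-orbits in $\mR_{\disc}(d)$ correspond, via the classical bijection with the class group of the order of discriminant $d$, to a finite packet $\heeg_d$ of CM points on the modular surface $Y=\SLdZ\bash\Hh$. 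Unfolding test functions and dividing by total mass reduces the desired ratio convergence to the weak-$\star$ convergence of the probability measures $\mu_d=|\heeg_d|^{-1}\sum_{x\in\heeg_d}\delta_x$ to the normalized hyperbolic probability $\mu_Y$ on $Y$, as $d\to-\infty$ through discriminants with $\legendre{d}{p}=1$.

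Next, I would adelize in order to exploit the splitting hypothesis. Classically $\heeg_d$ is a torsor for $\Pic(\order_d)$; equivalently, its preimage in the adelic double coset $\PGL_2(\Qq)\bash\PGL_2(\adele)$ is a single orbit of the anisotropic torus $T_d(\adele)$ attached to $\Qq(\sqrt d)$. Because $\legendre{d}{p}=1$ forces $p$ to split in $\Qq(\sqrt d)$, the local torus $T_d(\Qq_p)$ is conjugate inside $\PGLdQp$ to the diagonal split torus $A_p$. Passing to the $S$-arithmetic quotient
\[
Y_S=\PGL_2(\Zz[1/p])\bash\bigl(\PGLdR\times\PGLdQp\bigr),
\]
which fibres over $Y$ with fibres the Bruhat--Tits tree of $\PGLdQp$, the Heegner packets lift to probability measures $\widetilde\mu_d$ that are invariant under $A_p$.

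The ergodic heart of the argument then has three ingredients. First, tightness: reduction theory on $Y_S$, together with the fact that a CM point of large discriminant cannot lie arbitrarily high in the cusp, shows that no mass of $\widetilde\mu_d$ escapes to infinity. Second, a lower bound on the measure-theoretic entropy of any weak-$\star$ limit $\mu_\infty$ with respect to the $A_p$-flow: this is the statement that few Heegner points can concentrate in any small $p$-adic ball, and is the content of Linnik's \emph{basic lemma}, where the hypothesis $\legendre{d}{p}=1$ enters to rule out configurations that would otherwise produce short vectors in an associated definite binary form. Third, measure rigidity: an $A_p$-invariant probability measure on $Y_S$ with positive entropy must coincide with the Haar measure, as follows from Lindenstrauss' classification of positive-entropy diagonally invariant measures on such $S$-arithmetic quotients. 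Projecting back down gives $\mu_d\to\mu_Y$, which unfolds to the desired equidistribution on $V_{\disc,-1}(\Rr)$.

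The main obstacle, I expect, is the entropy lower bound: Linnik's congruence hypothesis $\legendre{d}{p}=1$ enters the proof essentially there and nowhere else, and producing a quantitatively useful $p$-adic non-concentration statement is what allows measure rigidity to take over. Tightness and the measure classification step are, by comparison, machinery that is available once the arithmetic input on $p$-adic separation is in hand.
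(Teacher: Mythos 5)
First, for context: the paper does not reprove Theorem~\ref{linnik-thm} at all --- it is quoted from Linnik, and the paper's own ergodic argument treats the positive-discriminant case at the real place, using positivity of $d$ as a substitute for the condition $\legendre{d}{p}=1$. Your sketch is the Linnik-style $p$-adic route that the paper only alludes to in Remark~\ref{exp-1/2}, and its skeleton (Heegner packets, the $S$-arithmetic extension on which the splitting of $p$ yields invariance under the $p$-adic diagonal $A_p$, the basic lemma, an entropy argument) is the right one. The genuine gap is in your third ingredient. A weak$^*$ limit of the lifted packet measures is invariant under a \emph{compact} real torus (conjugate to $\SO_2(\R)$, since for $d<0$ the torus is anisotropic over $\R$) and under the $p$-adic flow $a_p$; it is \emph{not} invariant under the real geodesic flow. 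Lindenstrauss' classification of positive-entropy measures requires exactly that missing invariance (real diagonal invariance plus recurrence under $\PGL_2(\Q_p)$), so it does not apply here. Moreover, positive entropy for the single flow $a_p$ cannot possibly characterize Haar measure: the $a_p$-action is, up to a compact extension, a shift on bi-infinite non-backtracking paths in the $(p+1)$-regular tree (see Remark~\ref{exp-1/2}) and carries a profusion of invariant measures of every intermediate entropy. What the argument actually needs, and what the basic lemma is strong enough to deliver, is the \emph{maximal} entropy bound $h\geq\log p-o(1)$ for weak$^*$ limits, followed by uniqueness of the measure of maximal entropy for $a_p$ on the noncompact quotient --- the $p$-adic analogue of Theorem~\ref{max-entropy-main} together with the escape-of-mass/entropy-in-the-cusp analysis of Theorem~\ref{Measure}. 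As stated, ``positive entropy plus Lindenstrauss'' is a step that fails.

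A secondary gap is the tightness claim. It is not true that a CM point of large discriminant cannot lie high in the cusp: the point attached to the principal form has imaginary part $\asymp|d|^{1/2}$, so over the family the packets reach arbitrarily high. The correct statement is proportional: the number of packet points above height $H$ is controlled by the number of ideals of $\order_d$ of norm $\leq H^{-2}|d|^{1/2}$, hence is $\ll_\varepsilon |d|^{1/2+\varepsilon}H^{-2}$, while the packet has $h(d)=|d|^{1/2+o(1)}$ points by Siegel's (ineffective) lower bound; this is the analogue of Propositions~\ref{class-group} and~\ref{all-info-1} and is what gives non-escape of mass. You should also note that the congruence condition enters not only in the non-concentration estimate but, more fundamentally, in producing the $A_p$-invariance in the first place (your own adelization paragraph shows this), and that when the unique-maximal-entropy route is set up correctly the escape-of-mass control and the entropy lower bound have to be run together, as in Sections 4--5 of the paper, rather than as independent black boxes.
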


Building on Linnik's ergodic method
%, and surmounting additional difficulties, %ELME
Skubenko \cite{skubenko} (see also \cite[Chap.\ VI.]{Linnikbook}) proved the analogous statement for {\em positive discriminants}:

\begin{thm}[Skubenko] \label{skubenko-thm}\label{skubenkopage}
Let $p>2$ be a fixed prime. As $d\ra+\infty$ amongst the positive discriminants such that
%$d\equiv 1(\modu p)$ (or more generally)
 $\legendre{d}{p}=1$, the set  $$|d|^{-1/2}\mR_{\disc}(d)\subset V_{\disc,+1}(\Rr),$$
becomes equidistributed with respect to $\mu_{\disc,+1}$, in the following sense:
 for any two continuous compactly supported functions $\varphi_{1},\varphi_{2}$ on $V_{\disc,+1}(\Rr)$ such that the integral $\mu_{\disc,+1}(\varphi_{2})\not=0$ we have
$$\frac{\sum_{x\in\mR_{\disc}(d)}\varphi_{1}(|d|^{-1/2}x)}{\sum_{x\in\mR_{\disc}(d)}\varphi_{2}(|d|^{-1/2}x)}\ra \frac{\mu_{\disc,+1}(\varphi_{1})}{\mu_{\disc,+1}(\varphi_{2})}\mbox{ as } d\ra-\infty.$$
In particular, $\sum_{x\in\mR_{\disc}(d)}\varphi_{2}(|d|^{-1/2}x)\not=0$ if $d$ as above is large enough.
\end{thm}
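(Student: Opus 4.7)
The plan is to follow Linnik's ergodic approach, using entropy theory to give a conceptual treatment and exploiting positivity of $d$ to control non-compactness. First I would re-interpret the set $|d|^{-1/2}\mR_{\disc}(d)$ homogeneously: the group $\PGLdR$ acts transitively on $V_{\disc,+1}(\R)$ and the stabilizer of a point is (a finite extension of) the split diagonal Cartan subgroup $A\subset\PGLdR$, so $V_{\disc,+1}(\R)$ is essentially $\PGLdR/A$. Under this identification, the orbits $[\mR_{\disc}(d)]$ correspond to a finite collection of closed $A$-orbits on the modular orbifold $X=\PGLdZ\backslash\PGLdR$—the primitive closed geodesics of discriminant $d$. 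Let $\mu_d$ denote the associated probability measure on $X$ (the sum of arc-length measures on these geodesics, suitably normalized). Equidistribution of $|d|^{-1/2}\mR_{\disc}(d)$ with respect to $\mu_{\disc,+1}$ then translates into $\mu_d \to m_X$ weakly, where $m_X$ is the $\PGLdR$-invariant probability measure on $X$.

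Let $\mu_\infty$ be a subsequential weak-$*$ limit of $\mu_d$; a priori $\mu_\infty(X)\leq 1$. The proof would establish three properties of $\mu_\infty$:
\begin{enumerate}
\item \emph{$A$-invariance}: automatic from $A$-invariance of each $\mu_d$.
\item \emph{Hecke recurrence at $p$}: the condition $\legendre{d}{p}=1$ means the torus associated to $\Q(\sqrt d)$ splits at $p$. Consequently the (wide) class group of the order of discriminant $d$ acts simply transitively on $[\mR_{\disc}(d)]$, and translation by a generator of the local torus at $p$ corresponds to the Hecke correspondence $T_p$ on $X$. Invariance of $[\mR_{\disc}(d)]$ under this action passes, in the limit, to $T_p$-recurrence of $\mu_\infty$.
\item \emph{Positive entropy}: using $T_p$-recurrence and pigeonholing in the Bruhat--Tits tree of $\PGLdQp$, one produces, for $\mu_\infty$-almost every $x$, many distinct nearby points lying in the unstable horocycle of $A$ through $x$. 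Standard arguments then force the $A$-entropy of $\mu_\infty$ to be bounded below by an absolute positive constant.
\end{enumerate}
With these three properties in hand, I would invoke a measure classification theorem for $A$-invariant, $T_p$-recurrent measures of positive $A$-entropy on $X$ (in the spirit of Einsiedler--Katok--Lindenstrauss) to conclude that $\mu_\infty$ is a multiple of $m_X$.

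The main obstacle, and where the positive discriminant case genuinely differs from Linnik's negative discriminant case, is non-escape of mass to the cusp. Since $X$ is non-compact and closed geodesics of discriminant $d$ can travel arbitrarily deep into the cusp, it is not a priori clear that $\mu_\infty$ is a probability measure. This is precisely where positivity of $d$ enters as a substitute for the splitting hypothesis at an auxiliary place used in Linnik's original treatment: a point on the geodesic associated to a form $(a,b,c)$ sits at height $\approx |d|^{1/2}/|a'|$ in the cusp, where $a'$ is the leading coefficient of a suitable $\PGLdZ$-translate, so high-cusp excursions correspond to forms of discriminant $d$ with unusually small leading coefficient. Counting integer solutions of $b^2-4ac=d$ with $|a|<T$ yields an estimate for the $\mu_d$-mass of the region of height $>H$ that tends to $0$ as $H\to\infty$, uniformly in $d$; hence no mass is lost in the limit. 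Combining this non-escape estimate with the measure rigidity input above gives $\mu_\infty = m_X$, and hence the desired equidistribution.
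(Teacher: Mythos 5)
Your reduction of the statement to the behaviour of the $A$-invariant measures $\mu_d$ on $\Gamma\backslash G$ is the same duality the paper sets up in \S\ref{subsec:duality}, and using the split prime from Linnik's condition is a legitimate alternative to the paper's route; but as written the proposal has two genuine gaps. First, positive entropy does not follow from $A$-invariance, $T_p$-recurrence and ``pigeonholing in the Bruhat--Tits tree'': for any \emph{fixed} $d$ with $\legendre{d}{p}=1$ the normalized measure on the single packet $\Gd$ is $A$-invariant and recurrent under the Hecke correspondence, yet it is supported on finitely many closed geodesics and has zero entropy. Any entropy lower bound for a limit $\mu_\infty$ must therefore come from an arithmetic counting input along the sequence $d\to\infty$ --- this is exactly Linnik's Basic Lemma, i.e.\ the bound on representations of binary by ternary quadratic forms (Proposition \ref{2-3reps}, Proposition \ref{all-info-2}) --- which your outline never supplies. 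Moreover, even granting $h_{\mu_\infty}(A)\geq c>0$, the classification you invoke (Lindenstrauss-type: $A$-invariant plus $p$-Hecke recurrent) requires positive entropy of \emph{almost every ergodic component}; from a bound on the total entropy alone one only gets $\mu_\infty=c\,\mu_X+\nu$ with $\nu$ carried by zero-entropy components, and such a $\nu$ (e.g.\ a limit of sub-packets concentrating on short closed geodesics) is not excluded and would already destroy the ratio statement. The paper avoids this entirely: the Basic Lemma is proved with the sharp exponent $\delta^3$, which forces the limit to have \emph{maximal} entropy, and then only the soft uniqueness of the measure of maximal entropy (Theorem \ref{max-entropy-main}) is needed --- no Hecke recurrence, and indeed no Linnik condition.

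Second, the non-escape step is not correct as stated. Counting forms with small leading coefficient (equivalently ideals of small norm, Proposition \ref{class-group}) yields only $\mu_d(X_{\geq H})\ll_\varepsilon d^{\varepsilon}H^{-2}$ (Proposition \ref{all-info-1}): the number of cusp excursions is bounded via the divisor function, while the normalization $\vol(\Gd)=|\Pic(\Or_d)|\Reg(\Or_d)$ is bounded below only through Siegel's ineffective theorem, so a $d^{\varepsilon}$ loss is unavoidable and for a \emph{fixed} height $H$ the bound is useless as $d\to\infty$; the paper says explicitly that this estimate does not control escape of mass. In particular your argument does not even rule out that all of the mass escapes, in which case both the limit of the ratios and the nonvanishing of the denominator fail; and while partial escape would be harmless for the ratio formulation \emph{if} one knew $\mu_\infty$ were exactly proportional to $\mu_X$, you have established neither $\mu_\infty\neq 0$ nor that proportionality (see the first gap). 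In the paper, non-escape is obtained by a genuinely different mechanism: orbits spending a large fraction of time above height $M$ can be covered by exponentially fewer Bowen balls (Proposition \ref{cover-lemma}), and combining this with the Basic Lemma bounds the mass high in the cusp (Lemma \ref{non-escape}, inside the proof of Theorem \ref{Measure}). If you wish to keep the Hecke-recurrence route, you still need this entropy-in-the-cusp argument (or some comparable new input) at both of these points.
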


We refer to Figure \ref{fig-skubenko} for an illustration of the case $d=377$.
\begin{figure}
\includegraphics[scale=0.4]{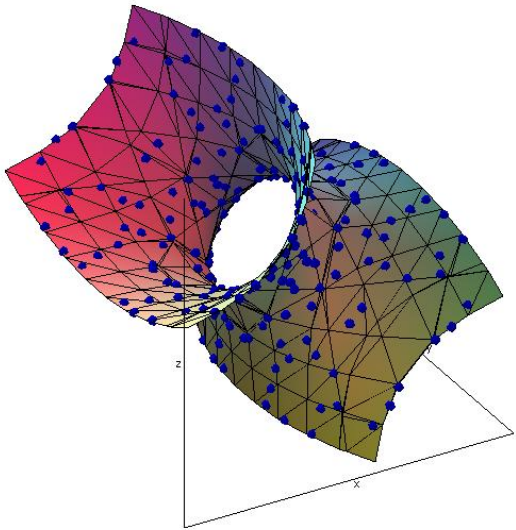}
\caption{\label{fig-skubenko}The distribution of $377^{-1/2}\mR_{\disc}(377)$ viewed on the one-sheeted hyperboloid, note that $h(377)=1$.
}
\end{figure}

The condition  $\legendre{d}{p}=1$ for some fixed prime $p$ is equivalent to the condition that
\begin{center}\em the fixed prime $p$ splits in the quadratic field $\Q(\sqrt d)$.\end{center}
This condition (which we shall refer to as {\em Linnik's condition}) was an essential input for Linnik's ergodic method but, as was pointed out by Linnik himself, it should not be necessary for the equidistribution theorem to hold. It is only thirty years later that this condition was removed in the beautiful work of Duke \cite{Duke}.

\subsection{Duke's theorem}
A key point of Duke's approach is to reformulate the prior theorems
in a dual form:  in terms of equidistribution of ``Heegner points'' (for negative $d$) or of closed geodesics (for positive $d$) on the modular surface $Y_{0}(1):=\SLdZ\bash\H$.

Assuming that $d$ is not a square, one associates to any $(a,b,c)\in\mR_{\disc}(d)$ the geodesic corresponding to the geodesic semi-circle in the upper half plane whose end points are
\begin{equation}\label{xabc}
x_{a,b,c,\pm}=\frac{-b\pm\sqrt d}{2a}.
\end{equation}
We lift this geodesic in the obvious way to the unit tangent bundle of $\Hh$ and then project it to a geodesic orbit on the unit tangent bundle $\T^1(Y_{0}(1))$. This geodesic orbit, which we denote by $\gamma_{[a,b,c]}$, is compact and depends only on the  $\SLdZ$-orbit of $(a,b,c)$. We obtain in this way a collection of $h(d)$ closed geodesics
$$\Gd=\bigcup_{[a,b,c]}\gamma_{[a,b,c]}\subset\T^1(Y_0(1)),$$
see Figure \ref{fig-Duke} for the case $d=377$.
\begin{figure}
\includegraphics[scale=0.4]{image377_cl1_reg22,47.jpg}
\caption{\label{fig-Duke}The distribution of $\mathcal{G}_{377}$ projected on the fundamental domain of $\SLdZ\bash\H$, note that $h(377)=1$.}
\end{figure}
This collection of compact orbits of the geodesic flow then carries a natural probability measure invariant under the geodesic flow
%and giving each orbit the same weight,
which we denote by $\mu_{d}$. Let $\mu_{\mathrm{L}}$ be the Liouville (Haar) probability measure on $\T^1(Y_{0}(1))$, then Duke's theorem (as extended by Chelluri \cite{Chelluri} to the unit tangent bundle)
gives the following:

\begin{thm}[Duke]\label{duke-thm} As $d\ra+\infty$ amongst the positive fundamental discriminants, the set  $\Gd$
becomes equidistributed on the unit tangent bundle $\T^1(Y_{0}(1))$ with respect to the measure $\mu_{\mathrm{L}}$: for any
continuous compactly supported function $\varphi$ on $\T^1(Y_{0}(1))$
$$\int_{\Gd}\varphi(t)d\mu_{d}(t)\ra \int_{\T^1(Y_{0}(1))}\varphi(u)d\mu_{\mathrm{L}}(u).$$
\end{thm}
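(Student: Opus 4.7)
The plan is to follow the Linnik--Skubenko ergodic strategy: translate Duke's theorem into equidistribution of periodic orbits of the diagonal flow on a homogeneous space, extract a weak-$*$ limit $\mu_\infty$ of the measures $\mu_d$, and classify $\mu_\infty$ by a combination of entropy lower bounds and extra dynamical invariance coming from the adelic Hecke structure. The novelty relative to Linnik and Skubenko is to replace the congruence condition $\legendre{d}{p}=1$ at a fixed prime $p$ by an argument that uses only positivity of $d$.

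First, I would identify $\T^1(Y_0(1))$ with $\SLdZ\bash\SLdR$, on which the geodesic flow is right translation by $a_t=\diag(e^{t/2},e^{-t/2})$, so that the geodesic attached to $[a,b,c]$ becomes a periodic $a_t$-orbit of length a multiple of the regulator $\log\varepsilon_d$. To bring adelic tools into play, I would lift each $\mu_d$ to a measure $\tilde\mu_d$ on the adelic quotient $X=\PGL_2(\Q)\bash\PGL_2(\adele)/K^f$, where $\tilde\mu_d$ is the natural measure on a single $T_d(\adele)$-orbit and $T_d\subset\PGL_2$ is the non-split torus associated to $\Q(\sqrt d)$. This lift produces at every finite place a Hecke-type action commuting with $a_t$, which is precisely what makes entropy arguments possible in what is otherwise a rank-one situation.

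Next I would pass to a weak-$*$ subsequential limit $\tilde\mu_\infty$ and establish three properties. \emph{Non-escape of mass}: a closed geodesic enters the cusp only when the form $ax^2+bxy+cy^2$ takes small values on primitive integer vectors, so a Minkowski-type count that uses the primitivity of the coefficients and the fact that $d$ is fundamental gives a uniform tail bound $\mu_d\{\height(y)\ge R\}\ll R^{-\kappa}$, whence $\mu_\infty$ is a probability measure. \emph{Positive entropy}: translates of $\tilde\mu_d$ by small Hecke correspondences $T_{p^k}$ remain quantitatively close to $\tilde\mu_d$ as long as $p^k\ll d^{1/2-\delta}$, via an orthogonality or Brauer--Siegel input on toric characters, and in the limit this produces many near-invariances that force positive entropy for $a_t$ on transverse leaves. \emph{Extra invariance}: by the same Hecke averaging, $\tilde\mu_\infty$ is invariant under (or non-trivially joined with) a local torus action at some auxiliary place. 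Positivity of $d$ enters critically here because $T_d$ is split over $\R$, so the real torus orbit carries a genuine $\R^\times$-action whose interaction with the Hecke smoothing converts that smoothing into entropy for the geodesic flow itself --- the role played in the Linnik--Skubenko setting by the auxiliary split prime $p$.

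Finally I would invoke a measure-classification theorem of Einsiedler--Katok--Lindenstrauss type: the only probability measure on $X$ invariant under $a_t$, of positive entropy, and additionally invariant under a non-trivial commuting Hecke action is the Haar measure $\mu_L$. Every subsequential weak-$*$ limit then coincides with $\mu_L$, proving the theorem. The main obstacle will be producing both the positive-entropy lower bound and the extra invariance for $\tilde\mu_\infty$ without Linnik's condition: in the congruence setting the local torus at $p$ is split and yields a free $\Qp^\times$-action almost for free, whereas here one must build the same structure by a pigeonhole/averaging argument over auxiliary primes, combined with a Mahler-compactness argument at the real place and a careful ruling-out of fine-scale concentration along the $a_t$-direction.
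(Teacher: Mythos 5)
Your proposal goes through an adelic lift, Hecke correspondences, and an Einsiedler--Katok--Lindenstrauss/Lindenstrauss-type classification (positive entropy plus a commuting finite-place action implies Haar). The central gap is that the extra structure this classification requires is exactly what disappears when you drop Linnik's condition $\legendre{d}{p}=1$: for a fixed prime $p$ the torus $T_d$ is non-split at $p$ for infinitely many $d$ in the sequence, the local orbit at $p$ is then compact and furnishes no $\Qp^\times$-action or Hecke recurrence, and a ``pigeonhole over auxiliary primes'' makes the auxiliary place vary with $d$, so it does not produce any fixed action under which the weak-$*$ limit is invariant or recurrent. The entropy step is likewise unsubstantiated: the assertion that closeness of $T_{p^k}$-translates of $\tilde\mu_d$ to $\tilde\mu_d$ ``forces positive entropy'' is not an argument, and, more importantly, positive entropy alone is far too weak here --- $A$-invariant measures on $\SLdZ\bash\SLdR$ of positive entropy are plentiful, so without the (unavailable) finite-place invariance your classification step has nothing to bite on. The arithmetic input that actually produces an entropy bound in this problem is a count of $\delta$-close pairs of points of $\Gd$ (Linnik's basic lemma, reduced to counting representations of binary quadratic forms by a ternary form), which yields \emph{maximal} entropy for any limit; your outline contains no such input. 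In the paper, positivity of $d$ is used quite differently from what you propose: it makes $T_d$ split over $\R$, so each $\mu_d$ is already invariant under the full geodesic flow, and then maximal entropy plus the uniqueness of the measure of maximal entropy (Bowen's theorem, in a finitary form adapted to the noncompact quotient) identifies the limit as Haar --- no adelic lift, Hecke action, or rigidity theorem with extra invariance is needed.

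The non-escape-of-mass step is also a genuine gap as written. The Minkowski/ideal-counting argument does not give a bound $\mu_d\{\Ht\geq R\}\ll R^{-\kappa}$ uniform in $d$: what it gives is $\mu_d(X_{\geq H})\ll_\varepsilon d^{\varepsilon}H^{-2}$, because the cusp excursions of $\Gd$ correspond to ideals of norm $\leq\tfrac12 H^{-2}d^{1/2}$, each excursion has length up to $\log d$, and the normalization by $\vol(\Gd)=|d|^{1/2+o(1)}$ carries an ineffective $d^{o(1)}$ coming from Siegel's lower bound for $L\bigl(\bigl(\frac{d}\cdot\bigr),1\bigr)$. For fixed height this bound is vacuous as $d\to\infty$, so it does not by itself rule out escape of mass; the paper states this explicitly and handles the cusp by a much more delicate mechanism, showing that trajectories spending a large fraction of their time above height $M$ can be covered by comparatively few Bowen balls, so that (via the close-pair bound) they carry little mass --- i.e.\ non-escape of mass is established jointly with the entropy argument, not as a preliminary tail estimate. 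Without repairing both of these points (a substitute for the finite-place invariance, or better a maximal-entropy bound, and a genuine control of mass in the cusp), the proposed route does not yield the theorem.
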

The equivalence of the equidistribution statement in Theorem~\ref{skubenko-thm} and Theorem~\ref{duke-thm} will be explained in  \S \ref{subsec:duality}.

The restriction to fundamental discriminants is not essential; indeed all the proofs extend to the general case, including the one we present here.   Duke's proof is fundamentally different from Linnik's; it does not rely on ergodic theory but on
harmonic analysis of the modular surface $\SLdZ\bash\H$, that is on the theory of automorphic forms supplemented by deep arguments from analytic number theory and in particular a breakthrough of Iwaniec \cite{Iwaniec}.

\medskip
In this paper we give a new proof of Duke's theorem in the case of positive discriminant. Our proof is strongly influenced by Linnik's ergodic method, and may be seen as a modern incarnation of Linnik's original ideas, and we use the positivity of the discriminant as a substitute to Linnik's condition that Skubenko relied on in his work.

  There are two main ingredients in the proof:
\begin{enumerate}
\item \emph{Linnik's Basic Lemma} --- An upper bound on the number of nearby pairs of points in the projection of $\mR_{\disc}(d)$ to $V_{\disc,-1}(\Rr)$ (as this set is infinite, the quantity to be bounded needs some additional interpretation), which eventually reduces to an upper bound on the number of ways a given binary quadratic form can be represented by a ternary quadratic form.
\item The \emph{uniqueness of measure of maximal entropy} for the flow corresponding to the one parameter group $a _ t = \begin{pmatrix} e ^ t& \\& e ^ {- t} \end{pmatrix}$ on $\SL_2( \Z) \backslash \SL_2 (\R)$.
\end{enumerate}

\noindent
We have made an effort to present both of these main ingredients in a self-contained way, as each relies on some well-known results that are unfortunately well-known in essentially disjoint circles of mathematicians.

\medskip

The second of these two ingredients replaces a more explicit but less conceptual argument of Linnik and Skubenko. The uniqueness of the measure of maximal entropy for this action is well-known (both in the cocompact and finite volume case) and in the cocompact case dates back to work of R.~Bowen \cite{Bowen}. However the version we give here is new in that it allows us to control how much weight $\Gd$ gives to small neighborhoods of the cusp in $\SL_2 ( \Z) \backslash \H$: essentially, we give a finitary version of the uniqueness of measure of maximal entropy in the \emph{noncompact} quotient $\SL _ 2 (\Z) \backslash \SL _ 2 (\R)$.
 This finitary version is the content of Theorem~\ref{Measure}, and involves a careful analysis of how much entropy can be carried by $a_t$-invariant measures that give disproportionately high weight to the cusp.  A cleaner version of the relationship between entropy and mass in the cusp (although not directly applicable for our main purposes)
 is given in Theorem \ref{entropy-cusp}. We believe these results are of independent interest, and will likely have other applications; it also raises some interesting new questions (see e.g.~\cite{EKadirov}).

We mention that another modern exposition of Linnik's method in a similar context (distribution of integer points on spheres) by J. Ellenberg and two of us (Ph.M. and A.V.) has appeared already in \cite{EMV}.  In that work Linnik's Basic Lemma is again a central ingredient, complemented by a different argument to convert the upper bounds provided by the Basic Lemma to equidistribution (i.e.\ both upper and lower bounds on number of points in specified regions). The reader may wish to compare these two complementary approaches.

\subsection{Notation.} \label{sec:notation}
We collect here some notation that is used throughout the paper:

The group $\SLdR$ acts transitively on the upper-half plane model $\Hh$ of the hyperbolic plane by fractional linear transformations and the stabilizer of the point $i$ is the compact subgroup $\SOdR$. The resulting identification
$$\Hh\simeq \SLdR/\SOdR$$
descends to an identification of $\Hh$ with $\PSLdR/\PSOdR$;
moreover the action of $\PSLdR$ on the unit tangent bundle $\Hh$ is simply transitive.
If we let $p \in T^1(\Hh)$ be the tangent vector pointing up at $i$,
then $g \mapsto gp$ gives an identification $\PSLdR\simeq T^{1}\Hh$.
Taking the quotient by $\PSLdZ$ we obtain an identification with the unit tangent bundle of the modular curve\footnote{Actually the modular curve has singularities at the points $i$ and $j=\frac{1+\sqrt{-3}}{2}$ owing to the fact that these points have non-trivial stabilizers in $\PSLdZ$, we will ignore this minor point.}
$\PSLdZ\bash\PSLdR\simeq T^{1}(\PSLdZ\bash\Hh).$

We shall make use of another identification of the quotient $\PSLdZ\bash\PSLdR$, namely with the space of lattices in $\R^2$ up to homothety.
Indeed, the space of lattices $\LdR$ is identified with $\GLdZ\bash\GLdR$ via $g\mapsto \Zz^2.g$; the same map also identifies  the space $[\LdR]$ of  lattices up to homothety with $\PGLdZ\bash \PGLdR$ and  the set  $\mcL^{(1)}_{2}(\Rr)=X$ of lattices of covolume one with $\SLdZ\bash\SLdR =\PSLdZ \bash \PSLdR$. Finally, the  sets $[\LdR]$ and $\LduR$ are also identified via the map $[L]\mapsto \vol(L)^{-1/2}.L$.

Thus the following spaces are identified: $$X \simeq  \PSLdZ\bash\PSLdR\simeq T^{1}(\PSLdZ\bash\Hh) \simeq [\LdR] \simeq \LduR.$$
When we speak of ``the lattice corresponding to $x \in X$,''
we have in mind always the image of $x$ under the isomorphism $X \simeq \LduR$.

We take the following fundamental domain
$$\mscS=\{(z,v)\in \H\times S^1,\ |\Re z|\leq 1/2,\ |z|\geq 1\} \subset T^1(\H) \simeq \PSLdR$$
for $\PSLdZ=\Gamma$.

 Fix an arbitrary left-invariant Riemannian metric $d$ on $\PSL_2(\R)$.
 It descends to a metric on $X$, denoted $d_X$ or simply $d$ for short.
Explicitly we have
\begin{equation} \label{distancelemma}
 d_{X}(\PSLdZ g_1,\PSLdZ g_2)=\min_{\gamma\in\PSLdZ}d(g_1,\gamma g_2)
\end{equation}

The geodesic curves on $T^{1}(\Hh)$ --- which in the upper half-plane are circles and lines intersecting the real axis in a normal angle --- correspond to the orbits of the right $A$-orbits in $\PSLdR$ where $A=\{a_t\}$ is the diagonal subgroup of $\PSLdR$. By a slight abuse, we shall use $A$ to refer to the diagonal subgroup
of all three groups: $\GLdR, \PGLdR$ and $\SLdR$.

\IGNORE{
\subsection{Plan of the paper}
The paper is organized as follows:
\begin{itemize}
\item[-] In \S \ref{orbitsection}, we discuss the relationships between Skubenko equidistribution theorem (of representations of an integer as a discriminant) and Duke's  equidistribution theorem in terms of closed geodesic. We also interpret everything in group theoretic terms. We also discuss the well know relations between these questions and real quadratic fields, orders and ideal class groups.
\item[-] In \S \ref{spacingsection}, we study the spacing properties of the compact torus orbits associated with some discriminant and establish that to some extend such orbits tend to be separated from each other (Linnik's basic lemma). This separation property is a consequence of the arithmetic nature of the problem considered and \S \ref{spacingsection} represents the ``arithmetic part'' of our proof.
\item[-] In \S \ref{ergodicsection} we relate the problem to ergodic theory and especially to the important notion of entropy. The spacing results of the previous section imply that the entropy (of any weak$^*$ limit) of the measures $\mu_{d}$ is maximal. For a compact quotient of $\SL(2,\R)$ this characterizes the Liouville measure, completing the proof. However, the case of a non-compact quotient (which is the context of Duke's theorem as stated) requires some further arguments (to control the possibility of escape of mass) which are discussed in \S \ref{Appendix-B}.
\item[-] In \S \ref{Appendix-A}, we provide a self-contained proof of the Siegel mass formula which is crucial to establish the {\em basic lemma}.
\end{itemize}
}

\medskip
\noindent
{\bf Acknowledgements:} The authors would like to thank Peter Sarnak for encouragement and many helpful conversations. A.V. would also like to thank Jordan Ellenberg for many discussions on the topic of quadratic forms. The authors also thank Menny Aka, Asaf Katz, Ilya Khayutin, Lior Rosenzweig for carefully going over a preliminary version of this paper.

\newcommand{\tran}[1]{{\vphantom{#1}}^{\mathit t}{#1}}

\section{Representations by the discriminant, orbits and quadratic fields}\label{orbitsection}
In this section we explain in greater detail the relationship between Skubenko's equidistribution theorem and Duke's and connect these questions to the arithmetic of real quadratic fields. Along the way we will find a few equivalent ways in which to describe compact $A$-orbits in $\mathscr{G}_d$.
Building on that we prove in \S \ref{subsec:duality} the equivalence between Skubenko's and
Duke's formulations.

%with $[\mR_{\disc}(d)]$,
%and give the corresponding description of the associated geodesics, considered as $A$-orbits on $X$:

\subsection{Overview of the bijections}\label{subsec:summary}

Recall that we have previously associated to any element of $[\mR_{\disc}(d)]$ --
i.e.\ to any $\GL_2(\Z)$ orbits in $\mR_{\disc}(d)$ --
 a closed geodesic on $\SL_2(\Z) \backslash \H$.
On the other hand, as discussed in \S \ref{sec:notation},
a closed geodesic in $\mathscr{G}_d$  corresponds to a closed $A$-orbit on
the space $X$.

Write $\Or_d := \Z[\frac{d+\sqrt{d}}{2}]$ for the order of discriminant $d$.

We shall show below that the following sets are in natural bijection to each other:
\begin{enumerate}
\item[i.]  $[\mR_{\disc}(d)]$, the set of  $\GLdZ$-orbits of primitive representations
in $\mR_{\disc}(d)$.
% [The associated geodesic
%to  a quadratic form $q$ corresponds to the set of homothety classes of lattices $L$, such that the restriction of the quadratic form $q_{0}(x,y)=xy$ to $L$
%is equivalent to $\vol(L) \cdot \frac{q}{\sqrt{d}}$. ]

\item[ii.] The set of $\GLdZ$-conjugacy classes of ring embeddings $\iota:\Or_d\hookrightarrow \MdZ$ which are {\em optimal}, i.e.\ for which the embedding cannot be extended to an embedding of a strictly bigger order $\Or \gneq \Or_d$ with image in $\MdZ$.

\item[iii.] $\Cl(\Or_{d})=$ the set of $K^\times$-homothety classes of proper $\Or_{d}$-ideals.
%
%[The associated geodesic corresponds to the $A$-orbits of
%$\theta_0(I)$, for any proper $\Or_d$-ideal $I$;
%here $ \theta_{0}(u+v\sqrt d)=(u+v|d|^{1/2},u-v|d|^{1/2})$.]

\end{enumerate}

In the case of a fundamental discriminant the above objects and their bijections are a bit easier to explain. In fact, if $d$ is a fundamental discriminant, then every representation is primitive, every embedding is optimal, and every $\Or_d$-ideal is proper. In reading the remainder of the section the reader may first specialize to this case, or even continue reading with Section \ref{spacingsection} and only refer to the portions of this section as needed for the remainder of the paper.

%  is that associated to the projection to
% $\PSLdZ \bash \Hh$  of the geodesics whose endpoints (i.e.\ intersections with the real axis) are of the form  $\frac{-b\pm\sqrt d}{2a}$ as in \eqref{xabc}.
%  Note first that the isometry
%associated to  maps
%the real numbers in \eqref{xabc} to the numbers $\frac{b\mp\sqrt{d}}{2c}$. Hence the associated two geodesics have the same image in
%$\PSLdZ\bash\PSLdR$. Next note that the matrix
% maps the geodesic with endpoints $\infty$ and $0$ (the imaginary axis which corresponds to the orbit $\PSLdZ A$)
%to the geodesic with endpoints $\frac{b\pm\sqrt{d}}{2c}$. Hence this geodesic corresponds to the orbit $\PSLdZ h_{a,b,c} A$.
%

\subsection{Discriminant and quadratic fields}
We establish the bijections of \S \ref{subsec:summary}.

Before beginning, we note that the sequence of maps
\be
\label{SymtoMo}
ax^2+bxy+cy^2\mapsto \left(\begin{array}{cc} a & b/2 \\ b/2 & c\end{array}\right)\mapsto \left(\begin{array}{cc} b & -2a \\ 2c & -b\end{array}\right)
\ee
defines an isometry between the spaces of (real) binary quadratic forms, symmetric $2\times 2$ real matrices and trace zero $2\times 2$ real  matrices, where each of those is equipped with a quadratic form:
$$(\mathcal{Q}(\Rr^2),\disc)\simeq(\sym_{2}(\Rr),-4\det)\simeq (M^0_{2}(\Rr),-\det).$$
The action of $\GLdZ$ in \refs{SLaction} is the restriction of the following action of $\GLdR$ on $\mathcal{Q}(\Rr^2)$:
$$g.q(x,y)=\frac{1}{\det (g)}q((x,y)g)=\frac{1}{\det (g)}q(ux+wy,vx+zy),\ g=\left(\begin{array}{cc}
u & v \\
w & z
\end{array}\right),
$$
which intertwines with the actions
$$g.(ax^2+bxy+cy^2)\longleftrightarrow \frac{1}{\det(g)}g\left(\begin{array}{cc} a & b/2 \\ b/2 & c\end{array}\right)\tran{g}
\longleftrightarrow g\left(\begin{array}{cc} b & -2a \\ 2c & -b\end{array}\right)g^{-1} .$$
Observe that these actions factor through $\PGLdR$.
They also induce an isomorphism  between $\PGLdZ$
and the group  of orthogonal transformations of $(\mathcal{Q}(\Rr^2),\disc)$
preserving the integral quadratic forms.
%ELME  actually this  group is identified with the special orthogonal groups of these quadratic spaces (as follows for instance from the interpretation in terms   of trace zero matrices).

 Let $d$ be a discriminant which is not a perfect square; let $(a,b,c)\in\mR_{\disc}(d)$ be a representation, and let
\be
\label{mabcdef}m=m_{a,b,c}=\left(\begin{array}{cc}  b& -2a \\ 2c  & -b\end{array}\right)
\ee
 be the trace zero matrix associated to it via the map
\refs{SymtoMo}. Since $$m^2=d\cdot \Id$$
this defines an embedding of the quadratic field ($d$ is not a square) $K=\Qq(\sqrt d)$ into $\MdQ$
$$\iota_{m}:\left.\begin{array}{ccc} K & \mapsto & \MdQ \\
u+v\sqrt d & \mapsto & u\Id+v.m\end{array}\right.$$

\subsubsection{Representations and optimal embedding}
The integrality properties of this embedding are measured by considering
$$\Or_{m}:= \iota_{m}^{-1} ( \Md(\Z))$$ which is an order in $K$.
Let us identify which order:
Note that $\Or_{\lambda.m}=\Or_{m}$ for any
$\lambda\in\Qt$. Hence if $b^2-4ac=d$ for $a,b,c\in\Zz$
we may write $$(a,b,c)=f(a',b',c')$$ with $f\in\Zz$ and $a',b',c'\in\Zz$ coprime integers satisfying
$$\disc(a',b',c')=d'=d/f^2.$$ This reduces the discussion to the case where $(a,b,c)$ is a {\em primitive representation} of $d$ (a representation
with coprime entries).

Assuming that $(a,b,c)$ is primitive, one sees quickly that
\begin{equation}\label{order-d}
 \iota_{m}^{-1}(\Or_{m})=\Or_{d}=\Zz[\frac{d+\sqrt d}2]
\end{equation}
is the order of {\em discriminant} $d$. If \eqref{order-d} holds, we say that $\iota_{m}$ defines an optimal embedding of $\Or_{d}$ into $\MdZ$.
We obtain in that way a bijection between
$$\hbox{the set of  $\GLdZ$-orbits of primitive representations $[\mR_{\disc}(d)]$ }$$
and
$$\hbox{the set of $\GLdZ$-conjugacy classes of optimal embeddings $\iota:\Or_{d}\hookrightarrow \MdZ$.}$$

\subsubsection{Embeddings and ideal classes}
Let us recall, that a lattice $I\subset K$ is a proper $\Or_{d}$-ideal, iff
$$\Or_{I}:=\{\lambda\in K,\ \lambda.I\subset I\}=\Or_{d}.$$
Then there is a bijection between
$$\hbox{  the set of $\GLdZ$-conjugacy classes of optimal embeddings of $\Or_{d}$ }$$
and the set of {\em proper ideal classes} of $\Or_{d}$
$$\Cl(\Or_{d})=\hbox{ the set of $K^\times$-homothety classes of proper $\Or_{d}$-ideals}.$$

This bijection goes as follows \cite{McDLatimer}:
Given a proper $\Or_{d}$-ideal $I\subset K$, one may choose a $\Zz$-basis $I=\Zz.\alpha+\Zz.\beta$ which gives an identification
$$\theta:\left.\begin{array}{ccc} I& \mapsto & \Zz^2 \\
u\alpha+v\beta & \mapsto & (u,v)\end{array}\right.$$
This identification induces the embedding
$$\iota: K\hookrightarrow \MdQ$$
defined by $$\iota(\lambda)(u,v)=\theta(\lambda.(u\alpha+v\beta)),$$
(or in other terms, such that $\theta(\lambda.x)=\theta(x)\iota(\lambda)$).

Since $\Or_{d}.I\subset I$, one has $\iota(\Or_{d})\Zz^2\subset\Zz^2$, that is $\iota(\Or_{d})\subset\MdZ$ and the fact that $I$ is a proper $\Or_{d}$-ideal is equivalent to the fact that $\iota$ is an optimal embedding of $\Or_{d}$.
If we replace the $\Zz$-basis $(\alpha,\beta)$ by another basis $(\alpha',\beta')$ then $\iota$ is replaced by a $\GLdZ$-conjugate. Finally if $I$ is replaced by an ideal in the same class $I'=\lambda.I$ $\lambda\in K^\times$, then the corresponding $\GLdZ$-conjugacy classes coincide: $[\iota_{I'}]=[\iota_{I}]$.

The inverse of the map
$$[I]\mapsto [\iota_{I}]$$
is as follows: given an optimal embedding $\iota:K\mapsto \MdQ$ of $\Or_{d}$, let $e_{1}=(1,0)\in\Zz^2$ be the first vector of the standard basis\footnote{We could have chosen any primitive vector in $\Zz^2$.} of $\Zz^2$, then the map
$$\theta:\left.\begin{array}{ccc} K& \mapsto & \Qq^2 \\
\lambda & \mapsto &e_{1}.\iota(\lambda) \end{array}\right.$$
is an isomorphism of $\Qq$-vector spaces; next define the lattice $I=\theta^{-1}(\Zz^2)$ in $K$ which is invariant under multiplication by
 $\Or_{d}$. In other words, $I$ is an $\Or_{d}$-ideal and $I$ being proper is equivalent to $\iota$ being optimal.

\subsubsection{The Picard group of the order $\Or_d$}
We now recall the definition and basic properties of the Picard group for an order $\Or_d$ in a quadratic field.

The product of two $\Or _ d$-ideals $I$ and $J$ gives another $\Or _ d$-ideal \[I\cdot J = \left\{ \lambda  \lambda ': \lambda \in I, \lambda ' \in J \right\};\]
and clearly this operation respects the equivalence relation introduced above on $\Or _ d$-ideals.
An $\Or _ d$-ideal $I$ is \emph{invertible} if there is some $\Or _ d$-ideal $J$ so that $I\cdot J=\Or_d$. An $\Or _ d$-ideal $I$ is \emph{locally principal} if for any prime $p$,
$$I_{p}:=I \otimes_{\Zz}\Zz_{p}=\lambda_{p} (\Or_d) _ {p},$$
where $(\Or_d) _ {p}=\Or _ d \otimes_\Zz \Zz_p$ and $\lambda_{p}$ is an element of $(K \otimes_{\Qq}\Qp)^\times$. Both properties depend only on the ideal class $[I]$ and not on $I$ itself.

For general orders $\Or$ in number fields and $\Or$-ideals $I$, one has the following implications
\begin{equation*}
\text{$I$ is locally principal} \Longrightarrow \text{$I$ is invertible} \Longrightarrow \text{$I$ is proper}
.\end{equation*}
We shall make use of the following property of orders in quadratic number fields:

\begin{prop}\label{prop-picard} For the orders $\Or_d$ in quadratic number fields the inverse implication \[
\textup{$I$ is proper} \Longrightarrow \textup{$I$ is locally principal}\] holds for $\Or_d$-ideals $I$. In particular, the set of proper ideal classes $\Cl(\Or_d)$, endowed with the composition law induced by forming the product of two lattices, has the structure of an abelian group.
\end{prop}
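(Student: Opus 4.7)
The plan is to prove the main implication (proper implies locally principal) by a prime-by-prime local analysis, and then to deduce the group structure on $\Cl(\Or_d)$ as a formality. Both properties are local on $\mathrm{Spec}\,\Z$: local principality by definition, and properness because the identity $(\Or_I)_p = \Or_{I_p}$---which holds since the formation of endomorphism rings commutes with localization on finitely generated $\Z$-modules---reduces $\Or_I = \Or_d$ to $\Or_{I_p} = (\Or_d)_p$ at every prime $p$. It therefore suffices to show that every proper $(\Or_d)_p$-ideal is principal over $(\Or_d)_p$.

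Writing $d = f^2 d_K$ with $d_K$ fundamental gives $\Or_d = \Z + f\Or_K$. For primes $p \nmid f$, the completion $(\Or_d)_p = (\Or_K)_p$ is the maximal order of $K \otimes_\Q \Q_p$---either a discrete valuation ring (if $p$ is inert or ramified in $K$) or $\Z_p \times \Z_p$ (if $p$ splits)---and in both cases it is a principal ideal ring, so the claim is trivial. The substantive case is $p \mid f$, where $(\Or_d)_p = \Z_p + p^{\ord_p f}(\Or_K)_p = \Z_p[\omega_d]$ with $\omega_d := p^{\ord_p f}\omega$ (for $\omega$ a generator of $(\Or_K)_p$) is a monogenic but non-maximal $\Z_p$-algebra. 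Given a proper $(\Or_d)_p$-ideal $J$, choose a free $\Z_p$-basis (available because $\Z_p$ is a PID), and after scaling by a suitable element of $K_p^\times$---an operation that preserves both properness and principality---arrange that $J = \Z_p + \tau\Z_p$ for some $\tau \in K_p \setminus \Q_p$. An elementary computation using the minimal polynomial $X^2 + BX + C$ of $\tau$ over $\Q_p$ yields
\[
\Or_J \;=\; \{a + b\tau : a,b \in \Z_p,\ bB \in \Z_p,\ bC \in \Z_p\} \;=\; \Z_p + p^{e}\tau\,\Z_p, \qquad e = \max(0,\, -\ord_p B,\, -\ord_p C).
\]
Imposing $\Or_J = (\Or_d)_p$ and matching this presentation with $\Z_p + \omega_d\Z_p$ forces $p^e\tau = a_0 + b_0\omega_d$ with $a_0 \in \Z_p$ and $b_0 \in \Z_p^\times$; a direct manipulation then shows $J$ is a free $(\Or_d)_p$-module of rank one, i.e., principal. (Conceptually this reflects the fact that the monogenic local $\Z_p$-algebra $(\Or_d)_p$ is a complete intersection, hence Gorenstein, and for Gorenstein local rings of dimension one, proper ideals are automatically invertible and thus principal.)

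For the group structure: locally principal $\Or_d$-ideals are closed under product (trivially in each completion), and every locally principal $I$ has an inverse given by the complementary lattice $I^{-1} := \{\lambda \in K : \lambda I \subset \Or_d\}$, for which the identity $I I^{-1} = \Or_d$ can be verified prime by prime (where $I$ is genuinely principal). This equips $\Cl(\Or_d)$ with an abelian group structure with identity $[\Or_d]$ and inverses $[I] \mapsto [I^{-1}]$. The main obstacle in the plan is the explicit local calculation at primes dividing the conductor; everything before it is reduction to the local case, and everything after it is formal.
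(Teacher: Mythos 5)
Your proposal is correct, but it takes a genuinely different route from the paper. The paper also works one prime at a time, yet instead of normalizing the local lattice and computing with a minimal polynomial it runs a short Nakayama-type argument, uniform in $p$: properness makes the map $(\Or_d)_p/p(\Or_d)_p\to\End_{\F_p}(I_p/pI_p)$ injective (this is where the compatibility $\Or_{I_p}=(\Or_I)_p$ that you invoke also enters, silently), monogenicity $\Or_d=\Z[x]$ then forces the image of $x$ to have a degree-two minimal polynomial, hence $I_p/pI_p$ is cyclic over $\F_p[\bar x]$, and iterating $I_p=\lambda_p(\Or_d)_p+p^kI_p$ yields $I_p=\lambda_p(\Or_d)_p$ --- no case distinction at conductor primes and no choice of basis. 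Your route buys more explicit information: after reducing to $J=\Z_p+\tau\Z_p$ with $\Or_J=\Z_p+p^e\tau\Z_p$ and $e=\max(0,-\ord_p B,-\ord_p C)$, one can actually name a generator, since $\lambda=a+b\tau$ generates exactly when $\ord_p N(\lambda)=-e$; indeed $\tau$ works when $\ord_p C=-e$, and $1+\tau$ works otherwise (then $\ord_p B<\min(0,\ord_p C)$, so $\ord_p(1-B+C)=\ord_p B=-e$), and the Gorenstein mechanism behind the fact becomes visible. The price is the case analysis and the one thin spot in your write-up: the sentence ``a direct manipulation then shows $J$ is free of rank one'' is asserted rather than performed, and the matching $p^e\tau=a_0+b_0\omega_d$ with $b_0\in\Z_p^\times$ does not by itself hand you a generator --- you still need the small valuation check just indicated (or the quoted Gorenstein fact). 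Since that check does go through, this is a presentational rather than mathematical gap; your handling of the primes $p\nmid f$ and of the group structure is correct and in fact more detailed than the paper's, which leaves the verification of the group axioms to the reader.
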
 This nice special feature of quadratic orders comes from the fact that in the quadratic case, orders are always monogenic (i.e. of the form $\Or = \Z [x]$).

\begin{proof}
Recall that $\Or _ d = \Zz[x]$ for $x=\frac{d+\sqrt d}2$.
Assume now that $I$ is a proper $\Or_d$-ideal and consider the $2$-dimensional $\Fp$-vector space $I_{p}/pI_{p}\simeq I/pI$. The natural map $$(\Or_d)_{p}/p (\Or_d)_{p} \mapsto \End_{\Fp}(I_{p}/pI_{p})$$
is injective. To see this, suppose that $\lambda \in (\Or_d)_{p}$ acts trivially on $I_{p}/pI_{p}$. Then $\lambda I_{p}\subset p I_{p}$ and $\frac{\lambda}p I_{p}\subset I_{p}$ and so $\frac{\lambda}p \in \Or_{p}$ as required. It follows that $\ov x$ the image of $x$ in $ \End_{\Fp}(I_{p}/pI_{p})$
has a minimal polynomial of degree $2$ and that $I_{p}/pI_{p}$ is a cyclic $\Fp[\ov x]$-module. So there exist $\lambda_{p}\in I_{p}$
such that $I_{p}=\lambda_{p}(\Or_d)_{p}+pI_{p}$ which implies that
\begin{multline*}
	 I_{p}=
\lambda_{p}(\Or_d)_{p}+p(\lambda_{p}(\Or_d)_{p}+pI_{p})=\\
=\lambda_{p}(\Or_d)_{p}+p^2I_{p}=\lambda_{p}(\Or_d)_{p}+p^3I_{p}=\ldots=\lambda_{p}(\Or_d)_{p}.
\end{multline*}
\end{proof}

\subsection{Interpretation in terms of lattices}
%{\bf AV: slightly modified here, I think I made it worse.}
Let us verify that the various descriptions of $\mathscr{G}_d$ are equivalent:

 Given $(a,b,c) \in R_{\disc}(d)$, put $h_{a,b,c}=\begin{pmatrix}b+\sqrt{d}&b-\sqrt{d}\\2c&2c\end{pmatrix}$ and $w = \begin{pmatrix}0&-1\\1&0\end{pmatrix}\in\SLdZ$.
 Then $w h_{a,b,c}$ maps $(\infty, 0)$ to $\frac{-b\pm\sqrt d}{2a}$.
 Therefore, the geodesic $\gamma_{[a,b,c]}$ on $\PSLdZ \bash \Hh$ associated to $(a,b,c)$
 after equation \eqref{xabc} is:
$$\gamma_{[a,b,c]} = w h_{a,b,c} . (0,\infty),$$ where $(0,\infty)$ is the geodesic on $\Hh$  joining $0$ and $\infty$.
Now $(0,\infty)$ corresponds, in the realization $T^1(\Hh)$,
to the $A$-orbit of the identity in $\SLdR$; therefore
$\gamma_{[a,b,c]}$ corresponds to $\SLdZ \cdot   w h_{a,b,c}A=\SLdZ \cdot h_{a,b,c} A$,
or equivalently the lattices of the form $\Z^2 \cdot h_{a,b,c} a_t \subset \mathcal{L}_2^{(1)} \ (a_t \in A)$.
 Now one calculates
\[
\frac1{\det (h_{a,b,c})} h_{a,b,c}\begin{pmatrix}0&\frac12\\\frac12&0\end{pmatrix} ^{t}h_{a,b,c}
=\frac1{\sqrt{d}}\begin{pmatrix}a&\frac{b}2\\\frac{b}2&c\end{pmatrix},
\]
which shows that in a particular basis of $\Z^2h_{a,b,c}$ the quadratic form $q_0(x,y) = xy$ takes the shape as in \eqref{special-shape}.

Since $A$ is the stabilizer subgroup of $q_0$, we have verified that $\gamma_{[a,b,c]}$ corresponds to:
\begin{quote}  The set of homothety classes of lattices $L$, such that the restriction of the quadratic form $q_{0}(x,y)=xy$ to $L$, expressed in terms of a basis $\alpha,\beta$ of $L$, take the form
\begin{equation}\label{special-shape}
 q_{0}(u\alpha+v\beta)=\vol(L)\frac{au^2+buv+cv^2}{d^{1/2}}.
\end{equation}
\end{quote}

Note that the  particular quadratic form $\frac{au^2+buv+cv^2}{\sqrt{d}}$ is not
canonically attached to the lattice $L$ because of the different choices of a basis.

Set $m_0=\begin{pmatrix}1&0\\0&-1\end{pmatrix}$ and $\iota_{0}$ to be the embedding $\iota_{0}:K\hookrightarrow \diag_{2}(\Rr)\subset\MdR$ obtained by mapping $\sqrt d$ to $d^{1/2}m_{0}$ and $\theta_{0}$ be the linear embedding $\theta_{0}: K\hookrightarrow \R^2$ given by $$\theta_{0}(\lambda)=(1,1)\iota_{0}(\lambda),\ i.e.. \ \theta_{0}(u+v\sqrt d)=(u+v|d|^{1/2},u-v|d|^{1/2}).$$

Now let us verify, as asserted in \S \ref{subsec:summary}, that the $A$-orbit of
$\theta_0(I)$ belongs to $\mathscr{G}_d$,  for any proper $\Or_d$-ideal $I$.
(We don't verify the more precise assertion that
this is exactly the element of $\mathscr{G}_d$ that corresponds to the class of $I$ under the bijection $\Cl(\Or_d) \leftrightarrow \mR_{\disc}$).
We need to verify (according to \eqref{special-shape})
that $\lambda \in I \mapsto \frac{q_0(\theta_0(\lambda))}{\vol(\theta_0(I))} \sqrt{d}$
is a quadratic form of discriminant $d$. But
$q_{0}(\theta_{0}(\lambda))=\N_{K/\Qq}(\lambda)$ is the norm;  and for any ideal $I\subset K$ that
$\vol(\theta_{0}(I))=|d|^{1/2}\N(I)$.
Here we have defined  {\em norm} $\N(I)$ of an ideal (relative to $\Or_{d}$) by the ratio of indexes
$$
 \N(I)=\frac{(\Or_{d}:\Or_{d}\cap I)}{(I:\Or_{d}\cap I)}.
$$
Now, for any ideal $I$, the map $ x \in I\mapsto \frac{\N_{K/\Qq}(x)}{\N(I)}$
is easily verified to be an integer quadratic form of discriminant $d$, as desired.

%{\bf AV: One thing lost in this discussion is the description of $\mathscr{G}_d$: ``The set of homothety classes of lattices $L$ such that
%$$\diag_{2}(\Rr)\cap\Or_{L}=\iota_{0}(\Or_{d})$$
%where $\Or_{L}$ denote the order of $L$
%$$\Or_{L}=\{m\in\MdR,\ L.m\subset L\}.$$ I can reinstate it if important.}

%Let $A$ be the diagonal torus in $\PGLdR$. One can check from the previous discussion that the following definitions
%give rise to the same set of $H$-orbits $\Gd^*$:
%\begin{enumerate}
%\item The set of $H$-orbits of the homothety classes of the lattices
%$\theta_{0}(I)$ for $I$ ranging over the proper $\Or_{d}$-ideals.

%\item The set of homothety classes of lattices $L$ such that
%$$\diag_{2}(\Rr)\cap\Or_{L}=\iota_{0}(\Or_{d})$$
%where $\Or_{L}$ denote the order of $L$
%$$\Or_{L}=\{m\in\MdR,\ L.m\subset L\}.$$

%
%\end{enumerate}

% --- However, the choice of the basis does not affect the required properties of the form.

 %ELME
%Alternatively, in view of the identification $[\LdR]\simeq\LduR$ we may also identify $\Gd^*$ with a set of $A$-orbits of lattices of volume $1$, where
 %$$A=\diag_{2}(\Rr)\cap\SLdR:$$
 %\begin{enumerate}
 %\item The set of $A$-orbits of the lattices
%$d^{-1/4}\N(I)^{-1/2}\theta_{0}(I)$ for $I$ ranging over the proper $\Or_{d}$-ideals.
%\item The set of lattices of volume $1$ such that the restriction of $q_{0}$ to $L$, expressed in a basis $\alpha,\beta$ of $L$ take the form
%$$q_{0}(u\alpha+v\beta)=\frac{au^2+buv+cv^2}{d^{1/2}} \hbox{ with $(a,b,c)\in\mR_{\disc}^*(d)$.}$$
%\end{enumerate}

\subsection{A duality principle}\label{subsec:duality}
Our goal now is to show that the equidistribution statements
of Skubenko's theorem and of Duke's theorem are equivalent.

The discussion which follows is valid in great generality; but we will consider only $G=\PGLdR$, $\Gamma=\PGLdZ$,
and the diagonal torus $A$ in $G$.

Since $\PGLdR$ is identified with $\SO_{\disc}(\Rr)$, it acts transitively on $V_{\disc,+1}(\Rr)$ (by Witt's theorem) and equals the $\PGLdR$-orbit of (say) $q_{0}(x,y)=xy$; equivalently $V_{\disc,+1}(\Rr)$ is identified with the $\PGLdR$-conjugacy class of
the matrix $m_{0}$ which has $A$ as its stabilizer subgroup in $G$. Hence
$$V_{\disc,+1}(\Rr)= \PGLdR.q_{0}\simeq \PGLdR.m_{0}\simeq \PGLdR/A.$$

\subsubsection{Duality between orbits} \label{subsub:duality}
It follows from the previous discussion that each representation $(a,b,c)\in\mR_{\disc}(d)$
%, or its projection $|d|^{-1/2}(a,b,c)\in V_{\disc,+ 1}$
is identified with some class $g_{a,b,c}A/A\in G/A$ or what is the same to an orbit $g_{a,b,c}A\subset G$ for some $g_{a,b,c}\in G$ such that $$g_{a,b,c}.q_{0}=|d|^{-1/2}(a,b,c),\ q_{0}=(0,1,0).$$

As we have seen $\Gamma$ acts on $\mR_{\disc}(d)$ and the latter decomposes into a finite disjoint union of $\Gamma$-orbits, setting
$$[a,b,c]=\Gamma\bash\Gamma(a,b,c)\in
[\mR_{\disc}(d)],$$ for the orbit of $(a,b,c)$, one has
  $$\mR_{\disc}(d)=\bigsqcup_{[a,b,c]\in [\mR_{\disc}(d)]}\Gamma.(a,b,c) $$
Hence $|d|^{-1/2}.\mR_{\disc}(d)$ is identified with the collection of $\Gamma$-orbits
  $$\bigsqcup_{[a,b,c]\in [\mR_{\disc}(d)]}\Gamma g_{a,b,c} A/A\subset G/A;$$
thus the problem of the distribution of $|d|^{-1/2}.\mR_{\disc}(d)$ inside $V_{\disc,+1}(\Rr)$ is a problem
 about the distribution of a collection of $\Gamma$-orbits inside the quotient space $G/A$.

There is an almost tautological equivalence between (left) $\Gamma$-orbits on $G/A$ and (right) $A$-orbits on $\Gamma\backslash G$ given by
  \begin{equation}
\label{correspondence}
\Gamma g A/A\longleftrightarrow \Gamma g A\longleftrightarrow\Gamma\backslash \Gamma g A.
\end{equation}
This duality induces a close relationship between
 the study of the distribution of $|d|^{-1/2}.\mR_{\disc}(d)$ inside $V_{\disc,+1}(\Rr)$
and the distribution of the collection of right-$A$ orbits
 $$\Gd=\bigcup_{[a,b,c]\in[\mR_{\disc}(d)]}x_{[a,b,c]} A\subset \Gamma\backslash G$$
  inside the  homogeneous space $\Gamma\backslash G$,
with \begin{equation}\label{xabcdef} x_{[a,b,c]}=\Gamma\backslash\Gamma g_{a,b,c}.\end{equation}
 This is the ``duality principle'' alluded to at the beginning of this section. Let us make this principle a bit more precise
 by identifying the orbits in question:

%It is sufficient restrict to orbits associated with primitive representations; indeed writing
%$$\Gd^*=\bigcup_{[a,b,c]\in[\mR*_{\disc}(d)]}x_{[a,b,c]} A$$
%one has
%$$\Gd=\bigsqcup_{f^2|d}\mathscr{G}^*_{d/f^2}.$$
Assuming that $(a,b,c)\in\mR_{\disc}(d)$ is primitive; one has
$$x_{[a,b,c]} A=\Gamma\bash \Gamma g_{a,b,c}A=\Gamma\bash \Gamma A_{a,b,c }g_{a,b,c}$$ where
$$A_{a,b,c}=g_{a,b,c}Hg_{a,b,c}^{-1}=\stab_{(a,b,c)}(G)$$
is the stabilizer of $(a,b,c)$ in $G$. That group is the group of real points of a $\Qq$-algebraic group, which we will denote by $\T_{a,b,c}$, namely the image in $\PGLd$ of the centralizer $Z_{m}$ of
$$m=m_{a,b,c}=\left(\begin{array}{cc} b & 2c \\ -2a & -b\end{array}\right).$$
 In terms of the embedding $\iota=\iota_{m_{a,b,c}}:K\hookrightarrow \MdQ$, one has
$$Z_{m}(\Qq)=\iota(K^{\times}),$$
and
$$\T(\Qq)=\iota(K^\times)/\Q^\times\Id,\ A_{a,b,c}=\T_{a,b,c}(\Rr)=\iota(K\otimes\Rr)^\times/\Rr^\times\Id,$$
and (since  $\MdZ\cap \iota(K)=\iota(\Or_{d})$),
$$\Gamma_{a,b,c}:=\Gamma\cap A_{a,b,c}=\iota(\Or_{d}^\times)/\{\pm \Id\}.$$
Alternatively, let $\iota_{0}$ denote the (real) embedding
$$\iota_{0}:\left.\begin{array}{ccc} K & \mapsto & \MdR \\
u+v\sqrt d & \mapsto & u\Id+v.d^{1/2}m_{0}\end{array}\right.$$
obtained by conjugating $\iota_{m}$ with $g_{a,b,c}^{-1}$, we have
$$\iota_{0}(K\otimes_{\Qq}\Rr)^\times\!/\Rr^\times\Id=A$$
and
$$\Gamma'_{a,b,c}:=g^{-1}_{a,b,c}\Gamma g_{a,b,c}\cap A=\iota_{0}(\Or_{d}^\times)/\{\pm \Id\}$$
so that we have homeomorphisms
\be\label{orbitabc}x_{[a,b,c]} A=\Gamma\bash g_{a,b,c} A\simeq g^{-1}_{a,b,c}\Gamma g_{a,b,c}\cap A\bash A=
 \iota_{0}(K\otimes\Rr)^\times/\Rr^\times\iota_{0}(\Or_{d}^\times).
 \ee
By Dirichlet's unit theorem, $\iota_{0}(K\otimes\Rr)^\times\!/\Rr^\times\iota_{0}(\Or_{d}^\times)$ is compact hence
  $x_{[a,b,c]}A$ is compact and since $[\mR_{\disc}(d)]$ is finite we obtain:

\begin{thm}\label{compactfact} The union of $A$-orbits $\Gd$  is compact.
 \end{thm}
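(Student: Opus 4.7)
The plan is to combine two classical ingredients --- finiteness of the class number (Gauss) and Dirichlet's unit theorem --- with the identification \eqref{orbitabc} already established in the excerpt. There is no substantive obstacle; the argument is essentially a concatenation of these two inputs.

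First, I would observe that
\[
\Gd \;=\; \bigcup_{[a,b,c] \in [\mR_{\disc}(d)]} x_{[a,b,c]} A
\]
is a \emph{finite} union of $A$-orbits, since by Gauss's theorem the index set $[\mR_{\disc}(d)]$ has cardinality $h(d) < \infty$. Because a finite union of compact sets is compact, the theorem reduces to showing that each individual orbit $x_{[a,b,c]} A$ is compact.

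For a single orbit I would invoke the homeomorphism \eqref{orbitabc}, namely
\[
x_{[a,b,c]} A \;\simeq\; \iota_{0}(K \otimes \R)^{\times} \big/ \R^{\times} \iota_{0}(\Or_{d}^{\times}).
\]
Since $d > 0$ and is not a perfect square, $K = \Q(\sqrt d)$ is a real quadratic field, so $K \otimes_\Q \R \cong \R \oplus \R$ via the two real embeddings, and the ambient group $\iota_{0}(K\otimes\R)^{\times}/\R^{\times}$ is isomorphic to $\R^{\times}$ (concretely, the map $(x,y) \mapsto x/y$). Dirichlet's unit theorem applied to the order $\Or_d$ --- of signature $(r_1,r_2) = (2,0)$, hence rank $r_1+r_2-1 = 1$ --- asserts that $\Or_{d}^{\times}/\{\pm 1\}$ is infinite cyclic, generated by a fundamental unit $\varepsilon_d$. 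Its image in $\R^{\times}$ is generated by $\varepsilon_d / \bar\varepsilon_d$.

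The main (and essentially only) point to verify is that this image is a cocompact discrete subgroup of $\R^{\times}$, which amounts to $\varepsilon_d/\bar\varepsilon_d \ne \pm 1$. This is automatic: $\varepsilon_d = \bar\varepsilon_d$ would force $\varepsilon_d \in \Q \cap \Or_d^\times = \{\pm 1\}$, contradicting that $\varepsilon_d$ is a fundamental unit, while $\varepsilon_d = -\bar\varepsilon_d$ would force $\varepsilon_d \in \Q \sqrt d$ with $\varepsilon_d \bar\varepsilon_d = -c^2 d = \pm 1$ for some $c \in \Q$, which is impossible since $d$ is a positive non-square integer. Consequently the quotient is a (possibly disconnected) circle, hence compact; and $\Gd$ is a finite union of compact orbits, completing the proof.
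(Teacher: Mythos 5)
Your proof is correct and is essentially the paper's own argument: the paper also deduces compactness of each orbit $x_{[a,b,c]}A$ from the identification \eqref{orbitabc} together with Dirichlet's unit theorem, and then concludes using the finiteness of $[\mR_{\disc}(d)]$. The only difference is that you spell out explicitly why the image of the fundamental unit is a nontrivial cocompact lattice in $\iota_{0}(K\otimes\R)^{\times}/\R^{\times}$, a verification the paper leaves implicit.
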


%\begin{rem*} In fact, it is possible and somewhat more consistent to deduce both the finiteness of the class number and Dirichlet's units theorem from the fact that $\Gd^*$ is compact. The main ingredient for this is the use of Mahler's compactness criterion and the fact that two distinct orbits $x_{[a,b,c]}A\not=x_{[a',b',c']}A$  have their points distant from each other by some positive constant depending only on $d$ (see \S \ref{spacingsection}); in fact this reasoning is valid for any order of a general number field (see \cite{ELMV} for instance); and as it turn out this is a very special case of the Borel-Harish-Chandra finiteness theorem \cite{Borelfinit}.
%\end{rem*}

\subsubsection{Duality between measures}
To consider equidistribution problems, one needs to refine %the identification \refs{firstident} and
the correspondence   \refs{correspondence} at the level of  measures.  Roughly speaking, the choice of  the counting measure
$\mu_{\Gamma}$ on $\Gamma$ and of  left-invariant Haar measure  $\mu_{A}$ on\footnote{Note that $A$ is unimodular.} $A$
 define a measure theoretic version of the correspondence \refs{correspondence}:

\begin{fact*} There exists homeomorphisms
 between the following spaces of Radon measures (relative to the weak-* topology):
   \begin{equation}
\label{correspondance2}
\begin{matrix}\hbox{left $\Gamma$-invariant}\\
\hbox{ Radon measures}\\
\hbox{$\lambda$ on $G/A$}
\end{matrix} \longleftrightarrow \begin{matrix}\hbox{left $\Gamma$, right $A$-invariant}\\
\hbox{ Radon measures}\\
\hbox{$\rho$  on $G$} \end{matrix}\longleftrightarrow \begin{matrix}\hbox{right $A$-invariant}\\
\hbox{ Radon measures}\\
\hbox{$\nu$  on $\Gamma\backslash  G$}.
 \end{matrix}
\end{equation}
These homeomorphisms are
characterized by the identities: for any $\varphi\in\Cco(G)$, one has
$$\lambda(\varphi_{A})=\rho(\varphi)=\nu(\varphi_{\Gamma})$$
where
$$\varphi_{A}(g):=\int_{A}\varphi(gh)d\mu_{A}(h),\ \varphi_{\Gamma}(g)=\sum_{\gamma\in \Gamma}\varphi(\gamma.g).$$
\end{fact*}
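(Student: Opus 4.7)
The plan is to verify both bijections by the standard averaging construction for Radon measures on homogeneous spaces. The main tools are the two linear operators
\[
\varphi \mapsto \varphi_A\colon \Cco(G) \to \Cco(G/A), \qquad \varphi \mapsto \varphi_\Gamma\colon \Cco(G)\to \Cco(\Gamma\backslash G).
\]
Both produce compactly supported continuous functions on the indicated quotients: the support of $\varphi_A$ descends to the image of $\supp(\varphi)$ in $G/A$, and $\varphi_\Gamma$ is a locally finite sum by proper discontinuity of $\Gamma\subset G$, with support contained in the image of $\supp(\varphi)$ in $\Gamma\backslash G$. Both are also \emph{surjective}: given $\psi \in \Cco(G/A)$, pick a nonnegative $\chi\in \Cco(G)$ with $\chi_A > 0$ on $\supp(\psi)$ (which exists by a standard partition-of-unity argument); then $\varphi := \chi \cdot (\psi/\chi_A)\circ\pi_A \in \Cco(G)$ satisfies $\varphi_A = \psi$, and the construction on the $\Gamma$-side is analogous.

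With these maps in hand, one direction of each bijection is routine: starting from a left-$\Gamma$-invariant Radon measure $\lambda$ on $G/A$, the formula $\rho(\varphi):= \lambda(\varphi_A)$ defines a positive linear functional on $\Cco(G)$, hence a Radon measure on $G$ by Riesz representation, and one then checks the required invariances. Left-$\Gamma$-invariance of $\rho$ comes for free from that of $\lambda$ together with the relation $(\varphi\circ L_{\gamma^{-1}})_A = \varphi_A \circ L_{\gamma^{-1}}$, while right-$A$-invariance of $\rho$ comes from $(\varphi\circ R_a)_A = \varphi_A$, itself a consequence of the unimodularity of $A$. The construction $\nu \mapsto \rho$ via $\rho(\varphi):=\nu(\varphi_\Gamma)$ is identical. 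The opposite directions define $\lambda(\psi) := \rho(\varphi)$ (and $\nu(\psi) := \rho(\varphi)$) for any lift $\varphi$ with $\varphi_A = \psi$ (resp.\ $\varphi_\Gamma = \psi$); for this to make sense one must verify independence of the lift.

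The key lemma is therefore: \emph{if $\rho$ is right-$A$-invariant and $\varphi\in \Cco(G)$ satisfies $\varphi_A\equiv 0$, then $\rho(\varphi)=0$}, with an analogous statement for the $\Gamma$-sum. I plan to fix a nonnegative cutoff $\chi \in \Cco(G)$ with $\chi_A \equiv 1$ on the projected support of $\varphi$ and compute, using two applications of Fubini together with the right-$A$-invariance of $\rho$ (applied via the change of variable $g\mapsto gh^{-1}$) and the unimodularity of $A$:
\[
\rho(\varphi) \;=\; \int_G \varphi\,\chi_A\,d\rho \;=\; \int_A \!\!\int_G \varphi(gh^{-1})\chi(g)\,d\rho(g)\,d\mu_A(h) \;=\; \int_G \chi(g)\,\varphi_A(g)\,d\rho(g) \;=\; 0.
\]
The $\Gamma$-analogue follows the same outline but is simpler, with the locally finite sum $\sum_{\gamma\in\Gamma}$ and proper discontinuity of $\Gamma$ replacing the $A$-integral and Fubini. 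Finally, the weak-$*$ bicontinuity of both bijections is automatic from the surjectivity of the averaging operators: convergence of a net $\lambda_n$ tested against $\varphi_A$ translates to convergence of $\rho_n$ tested against $\varphi$, and conversely. The principal obstacle in the whole argument is the displayed Fubini/invariance computation, as it is the only step that genuinely exploits the right-$A$-invariance of $\rho$; everything else is formal bookkeeping.
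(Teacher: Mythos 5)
Your proof is correct, and it is the standard unfolding (Weil quotient-measure) argument: surjectivity of the averaging maps $\varphi\mapsto\varphi_A$, $\varphi\mapsto\varphi_\Gamma$, plus the key lemma that $\varphi_A\equiv 0$ forces $\rho(\varphi)=0$ for right-$A$-invariant $\rho$, proved exactly by your Fubini/invariance/unimodularity computation. The paper does not prove this Fact itself but cites \cite[\S 8.1]{BeOh}, and your argument is essentially the one given there, so there is no substantive difference in approach.
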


\noindent
See for instance \cite[\S 8.1]{BeOh} for a proof of that fact. We work out this correspondence in specific cases:

\begin{itemize}[leftmargin=*]
\item $\rho$ is a Haar measure $\mu_{G}$ on $G$, which is $G$-biinvariant as $G$ is unimodular. The correspondence \refs{correspondance2} yield the quotient measures $\nu=\mu_{\Gamma \backslash G}$ on $\Gamma \backslash G$, and $\lambda=\mu_{G/A}\varpropto \mu_{\disc,\pm 1}$ on $G/A$.  The former measure $\nu$ is finite  (i.e.\ $\Gamma$ is a lattice in $G$) and we may adjust $\mu_{G}$ so that $\mu_{\Gamma \backslash G}$ is a probability measure.
\item The sum $\lambda_d$ of Dirac measures on $G/A$ given by
\begin{align*}
\lambda_{d}&=\sum_{(a,b,c)\in\mR_{\disc}(d)}\delta_{g_{a,b,c}A/A} =\sum_{[a,b,c]}\sum_{g \in \Gamma.g_{a,b,c}}\delta_{gA/A}\\
&=\sum_{[a,b,c]}\sum_{\gamma \in \Gamma/\Gamma_{a,b,c}}\delta_{\gamma g_{a,b,c}A/A}.
\end{align*}
\begin{prop*} The measure $\nu_{d}$ on $\GaG$ corresponding to $\lambda_{d}$ under \refs{correspondance2} is the sum of the push forwards of the Haar measure $\mu_{A}$ over the set of $A$-orbits $x_{[a,b,c]}A$, $[a,b,c]\in[\mR_{\disc}(d)]$.
\end{prop*}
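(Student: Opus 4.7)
The claim is most naturally verified by applying the characterizing identity $\lambda(\varphi_A) = \nu(\varphi_\Gamma)$ from the preceding Fact to an arbitrary test function $\varphi \in \Cco(G)$. First I would expand the left-hand side: since $\varphi_A$ is right-$A$-invariant and thus descends to $G/A$, and since $\lambda_d = \sum_{[a,b,c]}\sum_{\gamma \in \Gamma/\Gamma_{a,b,c}} \delta_{\gamma g_{a,b,c} A/A}$, we get
\begin{equation*}
\lambda_d(\varphi_A) = \sum_{[a,b,c]} \sum_{\gamma \in \Gamma/\Gamma_{a,b,c}} \int_A \varphi(\gamma g_{a,b,c} h)\, d\mu_A(h).
\end{equation*}

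Next I would make precise what the ``push-forward'' of $\mu_A$ on the orbit $x_{[a,b,c]} A$ means. By Theorem \ref{compactfact} together with \eqref{orbitabc}, the orbit map $h \mapsto \Gamma g_{a,b,c} h$ identifies $\Gamma'_{a,b,c} \backslash A$ with the compact orbit $x_{[a,b,c]}A$; the measure $\nu_{[a,b,c]}$ on $\Gamma\bash G$ intended in the statement is the pushforward along this quotient map of the natural Haar measure on $\Gamma'_{a,b,c} \backslash A$ compatible with $\mu_A$, and $\nu_d = \sum_{[a,b,c]} \nu_{[a,b,c]}$. The task is thus to show $\nu_d(\varphi_\Gamma)$ equals the quantity displayed above.

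For each class $[a,b,c]$, I would compute
\begin{align*}
\nu_{[a,b,c]}(\varphi_\Gamma)
&= \int_{\Gamma'_{a,b,c} \backslash A} \sum_{\gamma \in \Gamma} \varphi(\gamma g_{a,b,c} h)\, d\bar\mu_A(h) \\
&= \int_{\Gamma'_{a,b,c} \backslash A} \sum_{\gamma' \in \Gamma/\Gamma_{a,b,c}} \sum_{\delta \in \Gamma_{a,b,c}} \varphi(\gamma' \delta g_{a,b,c} h)\, d\bar\mu_A(h).
\end{align*}
The key step is the conjugation identity $\delta g_{a,b,c} = g_{a,b,c} \delta'$, where $\delta' := g_{a,b,c}^{-1} \delta g_{a,b,c}$: since $\Gamma_{a,b,c} = \Gamma \cap g_{a,b,c} A g_{a,b,c}^{-1}$, this sets up a bijection $\Gamma_{a,b,c} \leftrightarrow \Gamma'_{a,b,c}$. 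Using it, the inner double sum becomes $\sum_{\gamma'} \sum_{\delta' \in \Gamma'_{a,b,c}} \varphi(\gamma' g_{a,b,c} \delta' h)$; after swapping the $\delta'$-sum with the integral over a fundamental domain for $\Gamma'_{a,b,c}$ in $A$, the standard unfolding formula produces an integral over all of $A$ against $\mu_A$, yielding
\begin{equation*}
\nu_{[a,b,c]}(\varphi_\Gamma) = \sum_{\gamma' \in \Gamma/\Gamma_{a,b,c}} \int_A \varphi(\gamma' g_{a,b,c} h)\, d\mu_A(h).
\end{equation*}
Summing over $[a,b,c]$ gives exactly $\lambda_d(\varphi_A)$, and the uniqueness in the Fact concludes the proof.

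The only real subtlety, and hence the main potential obstacle, is the bookkeeping in the unfolding step: one must verify that the conjugation $\delta \mapsto g_{a,b,c}^{-1} \delta g_{a,b,c}$ carries $\Gamma_{a,b,c}$ bijectively onto $\Gamma'_{a,b,c}$ (which follows from $A_{a,b,c} = g_{a,b,c} A g_{a,b,c}^{-1}$), so that the change from summing inside $\Gamma$ over cosets mod $\Gamma_{a,b,c}$ to folding the sum over $\Gamma'_{a,b,c}$ against the quotient integral over $\Gamma'_{a,b,c}\backslash A$ is clean; the compactness of $\Gamma'_{a,b,c}\backslash A$ (Dirichlet, as in the derivation of Theorem \ref{compactfact}) ensures that the finite-sum $\sum_{\gamma\in\Gamma}\varphi(\gamma g_{a,b,c}h)$ is a legitimate integrand on the fundamental domain and that Fubini applies without convergence issues.
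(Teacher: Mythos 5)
Your proposal is correct and follows essentially the same route as the paper: both rest on the characterizing identity $\lambda(\varphi_A)=\nu(\varphi_\Gamma)$ and the same unfolding computation, using a fundamental domain for $\Gamma'_{a,b,c}$ in $A$ and the conjugation bijection $\Gamma_{a,b,c}\leftrightarrow\Gamma'_{a,b,c}$ by $g_{a,b,c}$. The only difference is cosmetic: you start from $\nu_{[a,b,c]}(\varphi_\Gamma)$ and unfold to $\lambda_{[a,b,c]}(\varphi_A)$, whereas the paper starts from $\lambda_{[a,b,c]}(\varphi_A)$ and folds, so the two arguments are the same computation read in opposite directions.
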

\end{itemize}
Indeed, set $\lambda_{[a,b,c]}=\sum_{\gamma \in \Gamma/\Gamma_{a,b,c}}\delta_{\gamma g_{a,b,c}A/A}$. Then if $S$ denotes a fundamental domain in $A$ for $\Gamma ' _ {a, b, c}$
\begin{align*}
\lambda_{[a,b,c]}(\varphi_{A})	 &=\sum_{\gamma \in \Gamma/\Gamma_{a,b,c}}\int_{A}\varphi(\gamma g_{a,b,c}h)dh = \sum_ { \gamma \in \Gamma} \int_ S \varphi(\gamma g_{a,b,c}h)dh \\
%=\int_{\Gamma_{a,b,c}\bash A_{a,b,c}}\varphi_{\Gamma}(hg_{a,b,c})dh\\
&=\int_{\Gamma'_{a,b,c}\bash A}\varphi_{\Gamma}(g_{a,b,c}h)dh=\int_{x_{[a,b,c]}A}\varphi_{\Gamma}(h)dh,
\end{align*}
hence the measure on $\Gamma \backslash G$ corresponding to $\lambda_{[a,b,c]}$ is given by the push forwards of the Haar measure $\mu_{A}$ to the periodic $A$-orbit $x_{[a,b,c]}A$, and the proposition follows.

 Let $$\vol(\Gd):=\nu_{d}(\Gd)=\sum_{[a,b,c]}\vol(x_{[a,b,c]}A),$$
 denote the total volume of this (finite) collection of (compact) $A$-orbits.
 From \refs{orbitabc} we see that the various orbits associated to primitive representations of $d$ have the same volume, namely with the correct normalization of the Haar measure of $A$
 $$\vol(x_{[a,b,c]}A)=\vol(\Rr^\times\iota_{0}(\Or_{d}^\times)\bash A)=\Reg(\Or_{d})$$
 where $\Reg(\Or_{d})$ is the {\em regulator} of $\Or_{d}$.
Therefore,
\[
\vol(\Gd)=|\Pic(\Or_{d})|\,\Reg(\Or_{d}).
\]
\IGNORE{ Decomposing $\Gd$ into orbits associated with primitive presentations,
 $$\Gd=\bigsqcup_{f^2|d}\sum_{[a,b,c]\in[\mR_{\disc}(d/f^2)]}x_{[a,b,c]}A=\bigsqcup_{f^2|d}
 \mathscr{G}^*_{d/f^2},$$
 one has
 \[\vol(\Gd)=\sum_{f^2|d}\vol(\mathscr{G}^*_{d/f^2})=\lambda\sum_{f^2|d}|\Pic(\Or_{d/f^2})|\,\Reg(\Or_{d/f^2}).\]}
If $d=\disc(\Or_{K})$ is a fundamental discriminant, the {\em Dirichlet class number formula} gives
$$\vol(\Gd)=|\Pic(\Or_{d})|\,\Reg(\Or_{d})=\lambda|d|^{1/2}L\bigl(\bigl(\frac{d}\cdot\bigr),1\bigr)$$
where $\lambda$ is some absolute constant, $(\frac{d}\cdot)$ is the Kronecker symbol and $L((\frac{d}\cdot),s)$ its associated $L$-function. Then by Siegel's theorem $L((\frac{d}\cdot),1)=|d|^{o(1)}$ as $d\ra\infty$ so that
\be\label{volestimate}\vol(\Gd)=|d|^{1/2+o(1)}.\ee
If $d=d'f^2$ with $d'$ a fundamental discriminant
\begin{equation*}
\frac {|\Pic(\Or_{d})|\,\Reg(\Or_{d}) }{ |\Pic(\Or_{d'})|\,\Reg(\Or_{d'})} = f \prod_ {p|f} \left (1- p^{-1}\left (\frac  {d '}p \right ) \right)
\end{equation*}
which
shows again that $|\Pic(\Or_{d})|\Reg(\Or_{d})=|d|^{1/2+o(1)}$ and hence \refs{volestimate}
holds in general (c.f.\ e.g.\ \cite[Sect.~9.6]{Edwards}).
  We let
 $$\mu_{d}:=\frac{1}{\vol(\Gd)}\nu_{d}.$$ This is an $A$-invariant probability measure on $\GaG$ and the above discussion shows that
Skubenko's Theorem on page \pageref{skubenkopage}
follows from the following:
 \begin{thm}\label{discgammaequid} As $d\ra\infty$ amongst the non-square discriminants, the sequence of measures $\mu_{d}$ weak-* converge to the probability measure $\mu_{\GaG}$, i.e.\ for any $\varphi_{\Gamma}\in\Cco(\GaG)$, one has
 $$\mu_{d}(\varphi_{\Gamma})=\frac{1}{\vol(\Gd)}\sum_{[a,b,c]}\int_{x_{[a,b,c]A}}\varphi_{\Gamma}(h)dh\ra \mu_{\GaG}(\varphi_{\Gamma}).$$
 \end{thm}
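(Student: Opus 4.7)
The plan is to show that every weak-$*$ limit $\mu$ of the sequence $(\mu_d)$ along non-square discriminants $d\to\infty$ equals the Haar probability measure $\mu_{\Gamma\backslash G}$. Since the $\mu_d$ are $A$-invariant probability measures on the non-compact space $\Gamma\backslash G$, a priori a weak-$*$ limit $\mu$ is only a positive $A$-invariant measure with $\mu(\Gamma\backslash G)\leq 1$, so the argument naturally splits into three parts: (i) no mass escapes to the cusp, so $\mu$ is a probability measure; (ii) $\mu$ has maximal entropy for the $a_t$-action; (iii) the unique $A$-invariant probability measure of maximal entropy is $\mu_{\Gamma\backslash G}$.

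For step (ii), which is the arithmetic heart of the argument, I would use Linnik's Basic Lemma (ingredient 1 from the introduction). The idea is to bound, for each $d$, the number of nearby pairs of compact orbits in $\Gd$ at a given small scale. Concretely, given a small radius $r>0$, one counts pairs $(x,y)$ with $x,y$ lying on (possibly different) orbits of $\Gd$ and $d_X(x,y)\leq r$; translating through the bijections of \S\ref{orbitsection}, this reduces to counting pairs of representations of the discriminant that become close after projection, hence to bounding how often a given binary quadratic form is represented by the ternary form $\disc$. Together with the volume estimate $\vol(\Gd)=|d|^{1/2+o(1)}$ from \eqref{volestimate}, this upper bound translates into the assertion that the periodic $A$-orbits constituting $\Gd$ equidistribute at logarithmic scales, which in turn forces any weak-$*$ limit $\mu$ to have entropy equal to the topological entropy of the geodesic flow, i.e., maximal. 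This is a standard Shannon--McMillan/Brin--Katok style conversion from a counting bound on nearby orbits to a lower bound on the entropy of the limit; the key point is that the Basic Lemma supplies exactly the input needed for the covering argument.

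Step (i) requires a finitary non-escape of mass argument, and this will be the main obstacle. The difficulty is that for the non-compact quotient $\Gamma\backslash G$ the uniqueness of measure of maximal entropy alone does not suffice: one must rule out that a positive proportion of the mass of the limit $\mu$ escapes into the cusp, producing a sub-probability measure whose renormalization could still carry maximal entropy but be strictly smaller than $\mu_{\Gamma\backslash G}$. I would invoke Theorem~\ref{Measure} (and its conceptual companion Theorem~\ref{entropy-cusp}), which quantifies the principle that an $a_t$-invariant measure giving disproportionate weight to neighborhoods of the cusp must have strictly smaller entropy than the Haar measure. Combined with step (ii), this forces $\mu(\Gamma\backslash G)=1$; alternatively, since the entropy bound from (ii) is already equal to the topological entropy of the full system, any escape of mass would leave the remaining (sub-probability) measure with entropy strictly less than maximal, contradicting (ii).

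For step (iii) I would simply cite the uniqueness of the measure of maximal entropy for the $A$-action on $\Gamma\backslash G$ (classically known, following R.~Bowen \cite{Bowen} in the cocompact setting and extended to finite volume as discussed in the introduction); the unique maximizer among $A$-invariant probability measures is the Haar measure. Combining (i)--(iii) shows that every weak-$*$ subsequential limit of $(\mu_d)$ equals $\mu_{\Gamma\backslash G}$, and by compactness of the space of sub-probability measures together with (i), the full sequence $\mu_d$ converges to $\mu_{\Gamma\backslash G}$ in the weak-$*$ topology on $\Cco(\Gamma\backslash G)$, which is precisely the statement of Theorem~\ref{discgammaequid}.
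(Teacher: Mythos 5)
Your proposal is correct and takes essentially the same route as the paper: the paper deduces Theorem~\ref{discgammaequid} by verifying the two hypotheses of the finitary Theorem~\ref{Measure} via Proposition~\ref{all-info-1} (mass near the cusp) and Proposition~\ref{all-info-2} (Linnik's Basic Lemma, with $\delta=d^{-1/4}$), and Theorem~\ref{Measure} itself is proved exactly along your three steps (non-escape of mass via the cusp/entropy covering estimates, maximal entropy of the limit, and the uniqueness of the measure of maximal entropy, Theorem~\ref{max-entropy-main}). The only cosmetic difference is that you present the entropy lower bound and the non-escape argument as separable, whereas in the noncompact setting the paper necessarily intertwines them inside the proof of Theorem~\ref{Measure}.
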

Indeed any continuous compactly supported function on $G/A$ is of the form $\varphi_{A}$ for $\varphi\in\Cco(G)$, hence by Theorem~\ref{discgammaequid}
\begin{multline*}
\lambda_{d}(\varphi_{A})=\nu_{d}(\varphi_{\Gamma})=\vol(\Gd)\mu_{d}(\varphi_{\Gamma})\\=
\vol(\Gd)(\mu_{\GaG}(\varphi_{\Gamma})+o(1))=
\vol(\Gd)(\mu_{G/A}(\varphi_{A})+o(1)).
\end{multline*}

\section{Spacing properties of torus orbits}\label{spacingsection}

In this section, we show that the various distinct orbits $x_{[a,b,c]}A\subset\Gd$ are in a suitable sense {\em well spaced} from each other; the main result is Proposition
\ref{all-info-2}.
 Recall that
$$\Gd=\bigsqcup_{[a,b,c]\in[\mR(d)]}x_{[a,b,c]}A.$$
where $x_{[a,b,c]}$ is defined in \eqref{xabcdef}.

%We begin by collecting some needed number theoretic information about the measures $\mu_d$, in particular two
%equivalent definitions, and then show
%how ergodic theory and in particular entropy can be used to turn this information into a proof of Duke's theorem.

\subsection{Ideal classes are controlling the time spent near the cusp.}
The space $X$ is not compact and this is measured through a height function (normalized to be invariant under scaling): given, for $L=\Zz^2.g\subset \Rr^2$ a lattice, by
$$\Ht(L)=\Bigl(\frac{\min_{x\in L-\{0\}}\|x\|}{\vol(L)^{1/2}}\Bigr)^{-1}=\Bigl(\frac{\min_{x\in\Zz^2-\{0\}}\|xg\|}{|\det(g)|^{1/2}}\Bigr)^{-1}.$$
where $\|.\|$ denote the Euclidean norm.
This continuous function is proper.  Indeed, if $x \in X$
and $(z,v) \in \mscS$ any representative, then the height $\Ht(x)$ and the imaginary part $\Im(z)$ satisfy $\Im(z)=\Ht(x)^2$.
For any $H>1$ let $X_{\geq H}$
 denote the set of all $x\in X$ with $\Ht(x)\geq H$.

In this section we evaluate explicitly how big the height of a lattice in $\Gd$ could be.

\begin{Proposition}\label{class-group}  Suppose the proper integral ideal
$J \subset \Or_d$ corresponds to $[a,b,c] \in \mR_{\disc}(d)$ under the bijection of \S \ref{subsec:summary}.
Then $x_{[a,b,c]}A \cap X_{\geq H} $ is nonempty if and only if $J^{-1}$ is equivalent to an ideal $I\subset\Or_{d}$ of norm
 $\leq\frac12 H^{-2}d^{1/2}$. Moreover, this defines a bijection between
 connected component $\Gd\cap X_{\geq H}$ and  proper $\Or_{d}$-ideal $I\subset \Or_d$ of norm $\leq\frac12 H^{-2}d^{1/2}$.
\end{Proposition}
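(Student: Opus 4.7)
The plan is to pass to the lattice description of \S \ref{subsec:duality}: the orbit $x_{[a,b,c]}A$ is identified with the $A$-orbit (of the homothety class) of $\theta_0(J) \subset \R^2$, where $J$ is the proper $\Or_d$-ideal attached to $[a,b,c]$. In this picture an excursion of the curve $t \mapsto \theta_0(J) a_t$ into the cusp $X_{\geq H}$ corresponds to the appearance of a short vector in $\theta_0(J) a_t$, and this short vector, being the image under $\theta_0$ of some $\lambda \in J$ of small field norm, translates into an integral ideal $I = \lambda J^{-1}$ of small norm in the class $[J^{-1}]$.

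For the first step, the height computation, I would use that $\Ht(L) \geq H$ means $L$ admits a nonzero vector of Euclidean norm at most $H^{-1}\vol(L)^{1/2}$. Writing the real embeddings of $\lambda \in K$ as $\lambda_1, \lambda_2$, the vector $\theta_0(\lambda) a_t = (e^t \lambda_1, e^{-t} \lambda_2)$ has squared norm $e^{2t}\lambda_1^2 + e^{-2t}\lambda_2^2 \geq 2|\lambda_1\lambda_2| = 2|\N_{K/\Q}(\lambda)|$ by AM-GM, with equality at $t = \tfrac12\log|\lambda_2/\lambda_1|$. Combined with $\vol(\theta_0(J)) = |d|^{1/2}\N(J)$ from \S \ref{subsec:duality}, this yields that $x_{[a,b,c]}A \cap X_{\geq H}$ is nonempty iff some $\lambda \in J \setminus \{0\}$ satisfies $|\N_{K/\Q}(\lambda)| \leq \tfrac12 H^{-2} d^{1/2} \N(J)$.

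For the second step, I would set $I := \lambda J^{-1}$; invertibility of $J$ (Proposition \ref{prop-picard}) implies that $I$ is an integral proper $\Or_d$-ideal, and one computes $\N(I) = |\N_{K/\Q}(\lambda)|/\N(J) \leq \tfrac12 H^{-2} d^{1/2}$ with $[I] = [J^{-1}]$. The inverse direction sends an integral ideal $I$ in $[J^{-1}]$ to the unique class mod $\Or_d^\times$ of $\lambda \in J$ with $\lambda \Or_d = IJ$; this sets up the element--ideal dictionary.

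For the bijection on connected components, the geometric input I need is that when $H > 1$, the shortest nonzero vector of a unimodular lattice sitting in $X_{\geq H}$ is unique up to sign: two nonparallel vectors of length $\leq H^{-1}$ would span a sublattice of covolume $< 1$. Hence throughout a maximal excursion of $\theta_0(J) a_t$ into $X_{\geq H}$ a single pair $\pm\theta_0(\lambda)$ remains the short vector. Passing to the compact orbit $x_{[a,b,c]}A \simeq \Gamma'_{a,b,c} \bash A$ as in \eqref{orbitabc} identifies two such short vectors when they differ by an element of $\pm\Or_d^\times$, so connected components of $\Gd \cap X_{\geq H}$ match $(J \setminus \{0\})/\Or_d^\times$ subject to the norm bound, which under $\lambda \mapsto \lambda J^{-1}$ is precisely the set of integral proper ideals in $[J^{-1}]$ of norm $\leq \tfrac12 H^{-2} d^{1/2}$. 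I expect this last step to be the main obstacle: rigorously identifying a single connected component with exactly one $\Or_d^\times$-class of $\lambda$, and matching the $A$-periodicity of $x_{[a,b,c]}A$ against the unit-group action on $J$, whereas the height computation and the ideal/element dictionary are essentially formal once the framework of \S \ref{subsec:duality} is unwound.
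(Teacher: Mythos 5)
Your proposal is correct and takes essentially the same route as the paper: the AM--GM minimization of $\|\theta_0(\lambda)a_t\|$ over $t$ together with $\vol(\theta_0(J))=d^{1/2}\N(J)$ and $q_0\circ\theta_0=\N_{K/\Qq}$ gives the criterion $|\N_{K/\Qq}(\lambda)|\leq\frac12H^{-2}\N(J)d^{1/2}$, the dictionary $I=\lambda J^{-1}$ turns this into an integral proper ideal of norm $\leq\frac12H^{-2}d^{1/2}$ equivalent to $J^{-1}$, and the covolume argument showing that a lattice in $X_{\geq H}$ with $H>1$ has only one short primitive vector up to sign identifies the connected components. The one small deviation is that the paper attaches to each component the unique \emph{primitive} short element $\lambda$ (up to sign), whereas you quotient all of $J\setminus\{0\}$ by $\pm\Or_d^\times$, which conflates $\lambda$ with its integer multiples that may also satisfy the norm bound; since $\lambda\mapsto\lambda J^{-1}$ is unit-invariant and the subsequent use (Proposition \ref{all-info-1}) needs only an upper bound on the number of components, this is harmless.
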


Even though the above does not control escape of mass for $\mu_d$ as $d\rightarrow\infty$ it does give an upper bound
for $\mu_d(X_{\geq H})$, see Proposition \ref{all-info-1}, which we will use in our proof of Duke's theorem.
Note that Proposition \ref{prop-picard} guarantees that there is an inverse $J^{-1}$ to the proper ideal $J$.

\begin{Remark}\label{rem1/4} Applying this result to $H=d^{1/4}$ we see that $\Gd\cap X_{\geq d^{1/4}}$ is empty (as there are no ideals of norm $< 1$). This implies that $\Gd$ is pre-compact.
\end{Remark}

\begin{proof}
Note that, if we identify $x \in X$ with a lattice $L$ of covolume $1$, then
$x A \cap X_{\geq H}$ is nonempty  if and only if there is some nonzero vector $(u,v)\in L$ with $|uv|\leq \frac12H^{-2}$.

%Indeed, suppose $(u,v)\in L$ with $0\neq |uv|\leq \frac12H^{-2}$, then after applying   the diagonal matrix
%$$a=\begin{pmatrix} |v/u|^{1/2}&0\\ 0&|u/v|^{1/2}\end{pmatrix}\in A$$
 %we obtain a covolume one lattice $L'=L.a$, containing a vector  of the shape $(\pm|uv|^{1/2},\pm |uv|^{1/2})$ which is of size $\leq H^{-1}$. Conversely, if $L.A\cap X_{\geq H}\not=\emptyset$, there is $a\in A$ and $0\not=(u,v)\in L$ such that $(u',v')=(u,v).a\in L'$ has length $\leq H^{-1}$ then (since $q_{0}$ is invariant under the action of $A$)
 %$$|uv|=|q_{0}(u,v)|=|u'v'|\leq \frac{1}2H^2.$$

 Therefore (using the explicit bijection of \S \ref{subsec:summary}) the $A$-orbit defined by $J$ intersects $X_{\geq H}$, if and only if $J$ contains an element $\lambda$ with $$|\N(\lambda)|\leq \frac12H^{-2}\N(J)d^{\frac12}.$$  Recall that $\N(J^{-1})=\N(J)^{-1}$ by standard properties of the norm. It follows that the $A$-orbit defined by $J$ intersects $X_{\geq H}$ if and only if $\N(\lambda J^{-1})\leq \frac12H^{-2}d^{\frac12}$ for some $\lambda\in J$ (so that $\lambda J^{-1}\subset\Or_{d}$).

Finally, notice that for $H>1$ there is, in a lattice $L'\in X_{\geq H}$, up to sign, only one primitive nonzero vector of length $\leq H^{-1}\vol(L')^{1/2}$ (which is a simple volume computation). Therefore, fixing $J$, in the above argument, a connected component of $\theta_{0}(J).A\cap X_{\geq H}$ corresponds to a unique primitive element $\lambda\in J$ with $|\N(\lambda)|\leq \frac12H^{-2}\N(J)d^{\frac12}$ (up to sign) and we can associate to this connected component the ideal $I=\lambda J^{-1}\subset\Or_{d}$ of norm $\leq \frac{1}2H^{-2}d^{\frac12}$.
% To make the correspondence formal one can fix representatives $J_1,\ldots,J_h\subset\Or_{d}$ of all ideal classes, then any ideal $I\subset\Or_{d}$ is of the form $I=\lambda J^{-1}_k$ for some $k\leq h(d)$ and $\lambda\in J_k$ where $k$ is unique and $\lambda$ is unique up to the units in $\Or_{d}$.
\end{proof}

\begin{Proposition}\label{all-info-1}
There is ``not too much mass high in the cusp'' in the sense that
$$\mu_{d}(X_{\geq H}) \ll_{\varepsilon} d^{\varepsilon} H^{-2}$$ for all $\varepsilon>0$
and $H\geq 1$.
\end{Proposition}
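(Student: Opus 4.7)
The plan is to combine Proposition~\ref{class-group} (which puts the connected components of $\Gd \cap X_{\geq H}$ in bijection with small-norm proper $\Or_d$-ideals) with a length estimate for each component in $A$, and to normalize using the volume estimate $\vol(\Gd) = d^{1/2+o(1)}$ from \eqref{volestimate}. Since $\mu_d = \vol(\Gd)^{-1}\nu_d$, the target reduces to proving $\nu_d(X_{\geq H}) \ll_\varepsilon d^{1/2+\varepsilon} H^{-2}$.

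First I would estimate the $A$-length of a component corresponding to an ideal $I$ of norm $N := \N(I) \leq \tfrac12 d^{1/2} H^{-2}$. Unraveling the identifications of \S\ref{orbitsection}, on this component the (unique up to sign) short primitive vector of the underlying covolume-$1$ lattice $L$ has the form $(u,v) = (|d|^{1/2}\N(J))^{-1/2}\theta_0(\lambda)$ with $\lambda J^{-1} = I$, and using $q_0(\theta_0(\lambda)) = \N_{K/\Q}(\lambda)$ together with $\vol(\theta_0(J)) = |d|^{1/2}\N(J)$ one finds $|uv| = N/d^{1/2}$. Under $a_t = \diag(e^t,e^{-t})$ this vector becomes $(ue^t,ve^{-t})$, so the height condition $\Ht(L\cdot a_t) \geq H$ reads $u^2 e^{2t} + v^2 e^{-2t} \leq H^{-2}$. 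Setting $\xi := u^2 e^{2t}$ this is a quadratic inequality $\xi^2 - H^{-2}\xi + (uv)^2 \leq 0$, solved on an interval $[\xi_-,\xi_+]$ with $\xi_\pm = \tfrac12(H^{-2} \pm \sqrt{H^{-4} - 4(uv)^2})$. Since $dt = d\xi/(2\xi)$, the $t$-length of the component is $\tfrac12\log(\xi_+/\xi_-)$, which an elementary computation bounds by $\log(d^{1/2}/(H^2 N)) + O(1)$.

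Next I would sum over ideals. Let $r_d(Y) := \#\{I \subset \Or_d \text{ proper}: \N(I) \leq Y\}$; a standard divisor argument---writing $d = d'f^2$ with $d'$ fundamental, relating proper $\Or_d$-ideals to $\Or_{d'}$-ideals coprime to the conductor (a local issue handled by Proposition~\ref{prop-picard}), and applying $\sum_{n \leq Y} r_{d'}(n) \ll Y\log Y$---yields the uniform bound $r_d(Y) \ll_\varepsilon Y d^\varepsilon$. Setting $Y = \tfrac12 d^{1/2}H^{-2}$, Abel summation combined with the length bound then gives
\[ \nu_d(X_{\geq H}) \leq \sum_{\N(I) \leq Y}\bigl(\log(Y/\N(I)) + O(1)\bigr) \ll_\varepsilon d^\varepsilon Y = d^{1/2+\varepsilon}H^{-2}, \]
and dividing by $\vol(\Gd) = d^{1/2+o(1)}$ produces the desired $\mu_d(X_{\geq H}) \ll_\varepsilon d^\varepsilon H^{-2}$. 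The main technical point will be the uniform ideal count for non-fundamental $d$, which is routine but must be verified with some care in the conductor $f$; once this and the length estimate above are in place, the rest is bookkeeping.
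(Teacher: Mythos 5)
Your argument is correct and takes essentially the same route as the paper: both rest on Proposition~\ref{class-group}, a length bound for each connected component of $\Gd\cap X_{\geq H}$, a divisor-type count of the proper $\Or_d$-ideals of norm $\leq\tfrac12 H^{-2}d^{1/2}$, and normalization by $\vol(\Gd)=d^{1/2+o(1)}$. The only differences are cosmetic: the paper uses the cruder uniform component-length bound $\ll\log d$ (via the maximal height $d^{1/4}$ of Remark~\ref{rem1/4}) together with the standard per-norm bound that the number of proper $\Or_d$-ideals of norm $n$ is $\ll_{\eps}n^{\eps}$, so your sharper length estimate and Abel summation, while fine, are not needed (and your reduction of the ideal count to the maximal order should be phrased via that per-norm divisor bound, since proper ideals need not be coprime to the conductor).
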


Note that to make this estimate useful, we will set later $H=d^{\varepsilon}$ for some $\varepsilon>0$.

\proof
 We note first that in any orbit in $\Gd$ the maximal height achieved is $\leq d^{\frac14}$ (see  Remark \ref{rem1/4}). This implies that for $H>1$ any connected component of $\Gd\cap X_{\geq H}$
 has length $\ll\log ({d})$. Indeed such a component corresponds (in the upper-half plane model)
   to the segment of some oriented geodesic circle (i.e.\ a half circle centered on the real line) made of whose points which have imaginary part between $H$ and $d^{1/4}$: the hyperbolic length of such a segment is bounded by $\ll\log({d^{\frac14}}/H)$.

   Therefore, by Proposition \ref{class-group} $$\vol(\Gd\cap X_{\geq H})\ll  \log(d) N_{\leq H}(d)$$ where $N_{\leq H}(d)$ is the number of proper ideals $I\subset \Or_{d}$ of norm $\N(I)\leq \frac12 H^{-2}d^{\frac12}$. Recall that for any $n\in\mathbb{N}$ the number of
 proper ideals in $\Or_{d}$ of norm equal to $n$ is bounded by the number of divisors of $n$ and so by $\ll_\eps n^\eps$. By summing over all $1\leq n\leq\frac12 H^{-2}d^{\frac12}$ we get that $N_{\leq H}(d)\ll_\eps (H^{-2}d^{\frac12})^{1+\eps}$. Together with \eqref{volestimate} this proves the proposition.
\qed

\subsection{Linnik's basic lemma and representing binary quadratic forms by ternary forms}

 Following Linnik we will derive the ``basic lemma'' from representation numbers of quadratic forms:
Let $q,Q$ be two integral non-degenerate quadratic forms on $\Zz^m$ and $\Zz^n$ respectively. Assuming that $m\leq n$, {\em a representation} of $q$ by $Q$ is an isometric embedding of quadratic lattices
$$\iota:(\Zz^m,q)\hookrightarrow (\Zz^n,Q)$$
in other terms a $\Zz$-linear map $\iota:\Zz^m\ra\Zz^n$ such that for $\bfx\in\Zz^m$
$$Q(\iota(\bfx))=q(\bfx).$$
For instance a representation $\bfx\in\Zz^n$ of an integer $d\in\Zz$ by a quadratic form $Q$ on $\Zz^n$ may be viewed
as the isometric embedding
$$\iota_{\bfx}:\left.\begin{array}{ccc}
(\Zz,dx^2)&\hookrightarrow&(\Zz^n,Q)\\
n &\ra  & n\bfx
\end{array}\right..$$
Let $\mR_{Q}(q)$ be the set of such representations: The group $\Gamma=\SO_{Q}(\Zz)$ acts on $\mR_{Q}(q)$ (for $\gamma\in\Gamma$, $\gamma.\iota=\gamma\circ\iota$) and the quotient
$\Gamma\bash\mR_{Q}(q)$ is finite.

We are interested here in evaluating $|\Gamma\bash\mR_{Q}(q)|$ in the codimension one case (i.e.. when $n-m=1$). More precisely, we will need to show that, in this case, $|\Gamma\bash\mR_{Q}(q)|$
is rather small. The simplest evidence come from the case $m=1,\ n=2$ : the representations of an integer by a binary quadratic form. For instance it is well know that for $d\not=0$ the number of integral solutions to $xy = d$
(i.e. the number of divisors of $d$) is bounded by $O_{\eps}(d^{\eps})$. Similarly the number of representations of an integer as a sum of two squares satisfies the same bound; indeed, for  any binary integral quadratic form $Q$ one has $|\Gamma\bash\mR_{Q}(d)|\ll_{q}|d|^\eps$ for any $\eps>0$. The following is a version of this claim for $m=2,\ n=3$, where in the case of non-fundamental discriminants the estimate is not as strong.

\begin{Proposition}\label{2-3reps}
Let $Q$ be an integral ternary quadratic form, and let $$q(x,y) = a x^2+b  xy+c y^2$$ an integral binary quadratic form, both non-degenerate.  Assume that $f^2|\gcd(a,b,c)$ is the greatest common square divisor of $a,b,c$. Then the number $N$ of embeddings
of $(\Z^2, q)$ into $(\Z^3, Q)$, modulo the action of $\so_Q(\Z)$, is $\ll_{Q,\eps} f\max(|a|,|b|,|c|)^{\eps}$.
\end{Proposition}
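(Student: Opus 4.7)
The plan is to parametrize each embedding $\iota:(\Z^2,q)\hookrightarrow(\Z^3,Q)$ by two pieces of data: its primitive closure $L:=\iota(\Z^2)^{\mathrm{prim}}\subset\Z^3$, and a $2\times 2$ integer matrix $\phi$ satisfying $\phi^{t}M_{Q|_L}\phi=M_q$; the integer $k:=|\det\phi|$ is then the index $[L:\iota(\Z^2)]$. Setting $v_i:=\iota(e_i)$, $V=(v_1\mid v_2)$, and $n:=v_1\times v_2=kn_0$ with $n_0\in\Z^3$ primitive, a Cauchy--Binet computation for $\det(V^{t}M_QV)$ yields the key identity
\begin{equation*}
Q^{\vee}(n_0)\cdot k^{2}\;=\;-\tfrac{1}{4}\disc(q),
\end{equation*}
where $Q^{\vee}$ is the adjoint ternary form of $Q$ (matrix $\det(M_Q)\,M_Q^{-1}$). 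The primitive sublattice $L$ is equivalently encoded by $n_0$ up to sign via $L=n_0^{\perp}\cap\Z^{3}$ (Euclidean perpendicular).

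The count then splits into two levels. The outer level sums over $\SO_Q(\Z)$-orbits of admissible $n_0$ (with $Q^{\vee}(n_0)=-\disc(q)/(4k^{2})$ for some $k$). The inner level, for fixed $L$, counts $\phi\in M_{2}(\Z)$ with $\phi^{t}M_{Q|_L}\phi=M_q$ modulo the isometry group $\mathrm{O}_{Q|_L}(\Z)$; this is precisely the count of representations of the binary form $q$ by the binary form $Q|_L$. The inner count is $\ll_{Q,\eps}\max(|a|,|b|,|c|)^{\eps}$ by the classical divisor bound for representations by binary quadratic forms, with uniformity in $L$ following from $\disc(Q|_L)\mid\disc(q)$ together with reduction theory (only finitely many $\GL_{2}(\Z)$-classes of primitive binary forms per discriminant).

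The factor $f$ arises from the \emph{content} $e$ of $\iota$, namely the largest integer with $\iota(\Z^{2})\subseteq e\,\Z^{3}$. Writing $\iota=e\cdot\iota'$, the relation $M_q=\iota^{t}M_Q\iota=e^{2}\,(\iota')^{t}M_Q\iota'$ forces $e^{2}\mid\gcd(a,b,c)$, hence $e\mid f$ by the maximality defining $f$. Stratifying embeddings by $e\mid f$ and rescaling $\iota\mapsto\iota/e$ (an embedding of $q/e^{2}$) reduces the problem to content-$1$ embeddings; summing a content-$1$ bound of $\ll_{Q,\eps}\max^{\eps}$ over the $\tau(f)\leq f$ divisors of $f$ yields the claimed $f\cdot\max^{\eps}$.

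The main obstacle, which is the heart of Linnik's argument, is the content-$1$ bound $\ll_{Q,\eps}\max^{\eps}$. Naive class-number estimates for the outer orbit count of primitive $n_0$ give only $\ll|\disc(q)|^{1/2+\eps}\sim\max^{1+\eps}$, which is too weak; the sharper bound requires carefully coordinating the outer and inner levels, exploiting that content-$1$ forces primitivity constraints on $(v_1,v_2)$ which, combined with the binary-form divisor bound on $\phi$, yield sufficient cancellation after summation over the admissible values of $k$.
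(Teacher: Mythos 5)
Your parametrization of an embedding by its primitive closure $L$ (equivalently by the primitive normal vector $n_0$, via $Q^{\vee}(n_0)k^2=-\tfrac14\disc(q)$) together with an inner count of representations of $q$ by the binary form $Q|_L$ is a correct reformulation, but it does not prove the bound, and you essentially concede this in your last paragraph. The difficulty is not the inner multiplicity (where the divisor bound indeed gives $\ll_{\eps}\max^{\eps}$ per class) but the outer count: the number of $\SO_Q(\Z)$-orbits of admissible $n_0$ is genuinely of size up to $|\disc(q)|^{1/2+o(1)}\approx\max(|a|,|b|,|c|)^{1+o(1)}$, and each class for which $Q|_L$ represents $q$ with the right index contributes at least one embedding. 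So the entire content of Proposition \ref{2-3reps} is to show that all but $\ll_{\eps}\max^{\eps}$ of these classes contribute nothing, and for this you offer only the assertion that ``carefully coordinating the outer and inner levels'' and ``cancellation after summation over $k$'' suffices --- no mechanism is given, and none is visible: primitivity of $(v_1,v_2)$ does not by itself rule out any class of $n_0$. This missing step is exactly what the paper supplies by a different route: Witt's theorem plus the genus/adelic decomposition reduces $N$ to a product of local counts $N(L_p)$, and each $N(L_p)$ is bounded by counting maximal lattices containing $L_p$ in the Bruhat--Tits tree of $\SO_Q(\Q_p)$ (Lemmas \ref{padic} and \ref{2adic}), giving $\ll (b_p+1)^2p^{\lfloor a_p/2\rfloor}$, whose product over $p$ is $\ll_{Q,\eps} f\max^{\eps}$.

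Your accounting for the factor $f$ is also incorrect. You claim content-$1$ embeddings satisfy the bound $\ll_{Q,\eps}\max^{\eps}$ and that $f$ enters only through rescaling $\iota\mapsto\iota/e$; if that were true, summing over the divisors of $f$ would give $\ll_{\eps}\max^{2\eps}$ with no factor $f$ at all, which is false. Concretely, take $Q=x^2+y^2+z^2$ and $q=f^2(x^2+y^2)$: an embedding is a pair of orthogonal vectors of Euclidean length $f$, most of which generate a sublattice contained in no $e\Z^3$ with $e>1$, and for suitable $f$ the number of such pairs modulo the finite group $\SO_Q(\Z)$ is of order $f^{1+o(1)}$. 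Thus the factor $f$ is already needed for embeddings of trivial content; in the correct argument it arises locally at primes $p$ with $p^2\mid\gcd(a,b,c)$, through the invariant $a_p$ of $q$ over $\Z_p$ and the term $p^{\lfloor a_p/2\rfloor}$ in the local lemmas, not from the content of $\iota$.
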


When $Q=x^2+y^2+z^2$ is the "sum of three squares" quadratic form such a bound is a consequence of an explicit formula on the number of representations due to Venkov \cite{Venkov} (assuming $a$ square-free). This bound was later generalized by Pall  \cite[Thm. 5]{Pall}. We provide a self-contained treatment in Appendix \ref{Appendix-A}.

Let
\begin{align*}
\peter{(a,b,c),(a',b',c')}_{\disc}&=\disc(a+a',b+b',c+c')-\disc(a,b,c)-\disc(a',b',c')\\
&=2bb'-4ac'-4a'c
\end{align*}
be the polarization inner product associated with the quadratic form $\disc$.
We will apply Proposition \ref{2-3reps} to the pair
$$Q=\disc,\ q(x,y)=dx^2+\ell xy+dy^2,$$
and note that $q(x,y)$ is non-degenerate if an only if $\ell\neq \pm 2d$. Hence we obtain:

\begin{Corollary} \label{QF} Let $\Gamma = \mathrm{SO}_{\disc}(\Z)$.
Then for any two integers $d,\ell$ with  $\ell\not=\pm 2d$, the number of $\Gamma$-orbits on pairs
 \begin{multline*}  \bigl\{((a,b,c), (a',b',c'))\in\Z^3\times\Zz^3:\\ \disc(a,b,c)=\disc(a',b',c')
= d, \langle (a,b,c), (a',b',c') \rangle_\disc = \ell \bigr\}
\end{multline*}
is $\ll_{\eps} f ( \max(|d|,|\ell|) )^{\eps} $, where $f^2$ is the largest square factor of $\gcd(d, \ell)$.
\end{Corollary}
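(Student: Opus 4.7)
The plan is to deduce this directly from Proposition~\ref{2-3reps} by identifying pairs with embeddings. A pair $(v, v') = ((a,b,c),(a',b',c'))$ determines a $\Z$-linear map
\[
\iota_{v,v'} : \Z^2 \to \Z^3, \qquad (x,y) \mapsto x\,v + y\,v',
\]
and conversely any such map is determined by its values on the standard basis. For $Q = \disc$ viewed as a ternary quadratic form on $\Z^3$, the identity
\[
Q(x v + y v') = x^2 Q(v) + xy\, \langle v, v'\rangle_{\disc} + y^2 Q(v')
\]
shows that $\iota_{v,v'}$ is an isometric embedding $(\Z^2, q) \hookrightarrow (\Z^3, Q)$ for $q(x,y) = d x^2 + \ell xy + d y^2$ precisely under the three conditions defining the pairs we wish to count. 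Moreover $\Gamma = \SO_{\disc}(\Z)$ acts by postcomposition on embeddings, matching its diagonal action on pairs, so counting $\Gamma$-orbits on pairs is the same as counting $\Gamma$-orbits on embeddings.

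Next I would verify the non-degeneracy hypothesis needed to invoke Proposition~\ref{2-3reps}: the discriminant of $q$ is
\[
\ell^2 - 4 d^2 = (\ell - 2d)(\ell + 2d),
\]
which is nonzero exactly when $\ell \neq \pm 2d$, i.e.\ precisely the hypothesis of the corollary. Thus $q$ is non-degenerate and Proposition~\ref{2-3reps} applies.

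Finally, I would read off the numerical bound. The coefficients of $q$ are $(d, \ell, d)$, so $\max(|a|,|b|,|c|) = \max(|d|, |\ell|)$ and the gcd of the coefficients of $q$ is $\gcd(d, \ell, d) = \gcd(d, \ell)$, whose largest square divisor is by hypothesis $f^2$. Proposition~\ref{2-3reps} then gives
\[
N \ll_{\eps} f \, \max(|d|, |\ell|)^{\eps},
\]
which is exactly the asserted bound. There is no genuine obstacle here beyond bookkeeping; the only thing to check carefully is that the notion of ``primitivity up to squares'' used in the parameter $f$ in Proposition~\ref{2-3reps} matches the corollary's convention, i.e.\ that $f^2$ is the \emph{largest} square dividing the gcd of all coefficients of $q$, which is indeed how $f$ was introduced in that proposition.
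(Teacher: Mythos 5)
Your proposal is correct and is essentially the paper's own derivation: the paper likewise applies Proposition~\ref{2-3reps} to $Q=\disc$ and $q(x,y)=dx^2+\ell xy+dy^2$, noting that $q$ is non-degenerate precisely when $\ell\neq\pm 2d$, with the pair-to-embedding dictionary you describe implicit in the definition of the polarization form. The only difference is that you spell out the bookkeeping (equivariance of the $\Gamma$-actions and matching of $f$ and $\max(|d|,|\ell|)$) which the paper leaves tacit.
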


We now translate the information obtained about quadratic forms above to Linnik's basic lemma, which we phrase in the geometric context.  This falls short from equidistribution but will suffice
as the arithmetic input to the ergodic arguments later.
%Roughly speaking the following says that the measure of most small balls decays at the same rate as Haar measure.

\begin{Proposition}[Basic lemma]\label{all-info-2}
We have
$$\mu_{d} \times \mu_{d} \{ (x,y) \in X_{\leq H}^2: d_{X}(x,y) \leq \delta\} \ll_\varepsilon H^4 \delta^3 d^{\varepsilon}$$
whenever $d^{-\frac{1}4}\leq\delta\leq \frac13H^{-2}$ and $\varepsilon>0$.
\end{Proposition}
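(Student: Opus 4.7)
The plan is to bound the product measure by reducing, via Corollary~\ref{QF}, to counting integer pairs of ternary quadratic forms of discriminant $d$ with prescribed inner product $\ell$, and then checking that each such pair contributes only $O(\delta^2)$ to the corresponding Haar mass on pairs of $A$-orbits. Unpacking $\mu_d=\vol(\Gd)^{-1}\sum_{[a,b,c]}\mu_{x_{[a,b,c]}A}$ together with $\vol(\Gd)=d^{1/2+o(1)}$ from \eqref{volestimate} reduces the proposition to producing the unnormalized mass bound $\ll_\eps d^{1+\eps}H^4\delta^3$.

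Given a close pair $(x,y)\in X_{\leq H}^2$, I pick Siegel lifts $g_x,g_y\in G$ of operator norm $\ll H$ (using $\Ht\leq H$) and a $\gamma\in\Gamma$ with $d_G(g_x,\gamma g_y)\leq\delta$. The (degree-$2$) adjoint representation sends $q_0$ to the scaled forms $v=d^{-1/2}(a,b,c)$ and $\gamma v'=d^{-1/2}\gamma(a',b',c')$ in $V_{\disc,+1}$ at Euclidean distance $\ll H^2\delta$; polarizing $\disc$ and rescaling by $d$ yields
\[
|B((a,b,c),\gamma(a',b',c'))-2d|\ll d\,H^4\delta.
\]
Setting $\ell=B((a,b,c),\gamma(a',b',c'))$, the hypothesis $\delta\leq H^{-2}/3$ keeps $|\ell|\leq 3d$ and confines $\ell$ to a range of size $\ll dH^4\delta$; the degenerate stratum $\ell=\pm 2d$, which essentially forces $x$ and $y$ onto the same $A$-orbit, is handled separately. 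For each admissible $\ell$ Corollary~\ref{QF} produces $\ll_\eps f\,d^\eps$ diagonal $\Gamma$-orbits of pairs with inner product $\ell$, where $f^2\mid\gcd(d,\ell)$; absorbing the conductor divisor-sum into $d^\eps$ and summing over $\ell$ yields $\ll_\eps d^{1+\eps}H^4\delta$ diagonal $\Gamma$-orbits of integer pairs in total.

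For each such orbit, which pins down both $A$-orbits $x_{[a,b,c]}A$, $x_{[a',b',c']}A$ and a specific $\gamma$-alignment between them, the Haar mass of close $(x,y)$ realizing that alignment is $\ll\delta^2$: parametrizing $x=x_1a_t$, $y=x_2a_s$, the condition $d_G(x_1a_t,\gamma x_2a_s)\leq\delta$ cuts the $2$-dimensional $A\times A$-orbit surface by a $\delta$-ball in the $3$-dimensional group $G$, with the transversal Jacobian bounded below thanks to $X_{\leq H}$ and the non-degeneracy $\ell\neq\pm 2d$. Combining everything,
\[
(\mu_d\times\mu_d)(\text{close pairs})\ll_\eps\frac{d^{1+\eps}H^4\delta\cdot\delta^2}{d^{1+o(1)}}\ll_\eps d^\eps H^4\delta^3,
\]
as required. \emph{The main technical obstacle} is the uniform $O(\delta^2)$ bound per alignment: Anosov stretching under $a_t$ would naively introduce an extra $\log(1/\delta_0)$ factor, where $\delta_0$ is the precise closest approach of the two orbits; the combined assumptions $X_{\leq H}$, $\delta\leq H^{-2}/3$, and $\ell\neq\pm 2d$ are exactly what is needed to absorb these logarithms into $d^\eps$ and close the count.
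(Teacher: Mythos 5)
Your reduction to Corollary~\ref{QF} follows the paper's strategy, but the heart of your argument — the claim that each diagonal $\Gamma$-orbit of pairs (each ``alignment'') contributes Haar mass $\ll\delta^2$ — is false, and this is a genuine gap, not a technicality. For two \emph{distinct} compact $A$-orbits the condition $d_G(g_1a_t,\gamma g_2a_s)\le\delta$ does not cut the $(t,s)$-plane transversally: the flow direction is a common tangent of the surface $(t,s)\mapsto(g_1a_t,\gamma g_2a_s)$ and of the $\delta$-neighbourhood of the diagonal, so the intersection is a strip of width $\sim\delta$ whose \emph{length} is the shadowing time of the two geodesics, of order $\log(1/\sigma)$ where $\sigma$ is their transverse separation. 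Neither $X_{\leq H}$, nor $\delta\le\frac13H^{-2}$, nor $\ell\ne\pm2d$ makes this length $O(\delta)$; all one gets (and all the paper proves, via the lower bound $d(g_1^{(j)}a_t,g_2^{(j)}A)\gg d^{-1}$ of \eqref{new-lower-bound}) is $\sigma\gg d^{-1}$, hence per-alignment mass $\ll\delta\log d\ll_\eps\delta\,d^{\eps}$ — a full factor of $\delta$ weaker than you assert. Your closing remark acknowledges the Anosov stretching but the logarithm is not the issue: absorbing $\log d$ into $d^\eps$ is fine, while the missing power of $\delta$ is not.

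Your final exponent $\delta^3$ only comes out right because this error is compensated by a second, weaker-than-necessary estimate: you bound $|\ell-2d|\ll dH^4\delta$ by a bilinear (polarization) estimate, whereas the paper observes that $2d-\ell=\disc\bigl((a_1,b_1,c_1)-(a_2,b_2,c_2)\bigr)$ is the \emph{quadratic} form evaluated at the difference vector, whose entries are $\ll d^{1/2}H^2\delta$ by \eqref{diff-equation}, giving $|2d-\ell|\ll dH^4\delta^{2}$ and hence $\ll_\eps d^{1+\eps}H^4\delta^{2}$ alignments. The correct bookkeeping is thus $\delta^2$ from the range of $\ell$ times $\delta\,d^\eps$ per alignment, not $\delta$ times $\delta^2$; with your distribution of the powers the argument cannot be closed. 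Two smaller points: the diagonal case $q_1=q_2$ is where the hypothesis $\delta\ge d^{-1/4}$ is actually used (unnormalized mass $\ll_\eps d^{1/2+\eps}\delta$, hence $\ll\delta^3d^\eps$ after dividing by $\vol(\Gd)^2$), and you should carry this out rather than defer it; and the exclusion of $\ell=\pm2d$ is not because such pairs lie on one $A$-orbit but because $\ell=\pm2d$ would force $d(a_2\mp a_1)^2$ to be a perfect square, contradicting that $d$ is not a square.
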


Note that the exponent $3$ of $\delta^3$ is optimal, and suggests that $\mu_d$ is $3$-dimensional in the appropriate scale. The trivial exponent is $1$, which follows from $A$-invariance of $\mu_d$.

\proof
We start by indicating the relationship between $\delta$-close tuples in $(\Gd\cap X_{\leq H})^2$ and the representation of the binary quadratic form
$ q(x,y)=dx^2+\ell xy+dy^2$  by the ternary quadratic form $\disc$.

From \eqref{distancelemma}, $g_1,g_2\in\PSLdR$ are such that $x_i=\Gamma g_i\in \Gd\cap X_{\leq H}$ for $i=1,2$ and $d_{X}(x_1,x_2)<\delta$, then we may
assume
\be\label{long-list-of-prop}
g_1\in\mscS, \quad g_2 \in \mscS', \quad \Gamma g_1 \in X_{\leq H} \quad \text{and} \quad d(g_1,g_2)<\delta,
\ee
where
$\mscS'$ is some slightly bigger set containing the fundamental domain $\mscS$ in its interior. For concreteness we take
$$\mscS'=\{(z,v)\in \H\times S^1,\ |\Re z|\leq 1,\ \Im z\geq 1/2\}.$$
This clearly shows that the matrix entries of both $g_i$ are controlled, i.e.\ $\|g_i\|\ll H$ where
$$
\|g\|=\tr(g\transp{g})^{1/2}.
$$
Moreover,  we may associate to $g_i$ the primitive integral quadratic form,
$$
 q_i(x,y)=\sqrt{d}[g_i. q_0](x,y)={a_ix^2+b_ixy+c_iy^2},\ b_i^2-4a_ic_i=d,\ \gcd(a_i,b_i,c_i)=1.
$$

We have to consider two different possible cases. Either $q_1=q_2$ (i.e.\ $g_2\in g_1A$) or $q_1\neq q_2$.

The total mass for the first case is easy to estimate by $\ll_\epsilon d^{1/2+\epsilon}\delta$ before normalization by the total volume, which gives after the normalization
that
$$
 \mu_{d} \times \mu_{d} \{ (\Gamma g_1,\Gamma g_1 h) \in X_{\leq H}^2: h\in A,d(\Id,h) \leq \delta\} \ll_\varepsilon \delta d^{-1/2} d^{\varepsilon}\leq \delta^3 d^{\varepsilon}
$$
since $ d^{-1/4}\leq \delta$.

Henceforth we assume $q_1\neq q_2$. Since $\|g_i\| \ll H$, we have
\be\label{coeffquadratic}
  \max(|a_i|,|b_i|,|c_i|) \ll d^{1/2} H^2.
\ee
Also by assumption $g_2=g_1h$ with $d(h,\Id)<\delta$. This shows that $q_2=\sqrt{d}g_1. (h.q_0)$ where
$\|h.q_0-q_0\| \ll \delta$.
Therefore,
\be\label{diff-equation}
 \max(|a_1-a_2|,|b_1-b_2|,|c_1-c_2|)\ll d^{1/2}H^2\delta.
\ee
We now define $$q(u,v)=\disc(u(a_1,b_1,c_1)+v(a_2,b_2,c_2))=du^2+\ell uv+dv^2.$$
From the bound \eqref{diff-equation} on the difference of the vectors we know
$$
|q(1,-1)|=|2d-\ell | \ll d H^4\delta^2.$$

In order to apply Corollary \ref{QF} on $q$, we need to check that $q$ is not degenerate, i.e.\ that $\ell\neq\pm 2d$.
Indeed, if $\ell=\pm 2d$ then
$$
 d(a_2\mp a_1)^2=q(a_2,-a_1)=\disc\bigl(a_2(a_1,b_1,c_1)-a_1(a_2,b_2,c_2)\bigr)=(a_2b_1-a_1b_2)^2,
$$
which contradicts the assumption that $d$ is not a perfect square.
 Therefore $\ell\not=\pm 2d$. In this case we may apply Corollary  \ref{QF} to obtain
the bound
\[
N_{\ell,d}=\left|\SO_{\disc}(\Zz)\bash\{(\Zz^2,dx^2+\ell xy+dy^2)\hookrightarrow (\Zz^3,\disc)\}\right|\ll f\max(d,\ell)^\eps
\]
on the number $N_{\ell,d}$ of inequivalent ways in which the quadratic form $dx^2+\ell xy+dy^2$ can be represented, where $f^2|\gcd(d,\ell)$ is the greatest square divisor.
Note that the group $\operatorname{SO}_{\disc}$ is rationally equivalent to $\operatorname{PGL}_2$, and so up to isogeny rationally equivalent to $\SL_2$. Therefore, $\operatorname{SO}_\disc(\Zz)$ is commensurable to the image of $\Gamma=\SL_2(\Zz)$ and we may also use $\Gamma$ instead of $\SO_\disc(\Zz)$ in the above estimate.

% Here we wish to apply Corollary  \ref{QF}.
 Let
\[
 \Gamma (q_1^{(1)},q_2^{(1)}), \ldots, \Gamma (q_1^{(k)},q_2^{(k)})
\]
be a complete list of diagonal $\Gamma$-orbits of pairs of quadratic forms which can be written as
$$
 q_i^{(j)}(x,y)=\sqrt{d}g_i^{(j)}. q_0(x,y)
$$
with $g_1^{(j)},g_2^{(j)}$ satisfying \eqref{long-list-of-prop}
%\[
%g^{(j)}_1\in\mscS, \quad g^{(j)}_2 \in \mscS', \quad \Gamma g^{(j)}_1 \in X_{\leq H} \quad \text{and} \quad d(g^{(j)}_1,g^{(j)}_2)<\delta.\]

The number $k$ of these diagonal $\Gamma$-orbits of quadratic forms is bounded by
 \begin{align*}
 k&\leq\sum_{\ell= 2d-L}^{2d+L}N_{\ell,d}=\sum_{f^2|d}\sideset{}{'}\sum_\stacksum{|2d-\ell|\le L}{f^2|\ell,\ \ell\neq\pm 2d} N_{\ell,d}
 \\
&\ll_\eps \sum_{f^2|d}\sideset{}{'}\sum_\stacksum{|2d-\ell|\le L}{f^2|\ell,\ \ell\neq\pm 2d}fd ^\epsilon
\ll_\epsilon\sum_{f^2|d}f\frac{d^{1+\epsilon}H^4\delta^2}{f^2}\ll_{\eps}
 d^{1+2\eps}\delta^2 H^4.
\end{align*}
where $L  \ll d H^4\delta^2$ and $\sum'$ denotes a sum over $\ell$ for which $\frac{(d,\ell)}{f^2}$ is square-free.
\medskip

\IGNORE{Observe also that, if we choose $\delta \ll d^{-1/2} H^{-2}$, so that the upper bound in \eqref{diff-equation}
is less than $1$, then we must have $q_1=q_2$. Therefore, since $q_1^{(j)}\neq q_2^{(j)}$
this shows
$$d(g_1^{(j)},g_2^{(j)})\gg d^{-1/2}H^{-2}.$$ }
We claim that for  $q_1^{(j)}\neq q_2^{(j)}$ we have
\be\label{new-lower-bound}
d(g_1^{(j)}a_{t},g_2^{(j)}A)\gg d^{-1}.
\ee
Indeed suppose $d(g_1^{(j)}a_{t},g_2^{(j)}a_{t'})\leq c d^{-1}$ (for some constant $c$ determined in a moment). Then we may find some $\gamma\in\Gamma$ with $\gamma g_1^{(j)}a_t\in\mathscr{S}$, which also implies $\gamma g_2^{(j)}a_{t'}\in\mathscr{S}'$. By Remark \ref{rem1/4} we have $\Gd \subset X_{\leq H'}$ for $H'=  d^{1/4}$. Hence by choosing $c$ appropriately
the upper bound in \eqref{diff-equation} (applied for $H'=d^{1/4}$ and $\delta=cd^{-1}$) is less than one, which gives a contradiction.

Writing $g_2=g_1\exp v$ for some $v=v^-+v^++v_A\in\mathfrak{sl}_2(\R)$, with $v^-,v^+,v_A$ eigenvectors of $\operatorname{Ad}_{a_t}$ with eigenvalues $e^{-t},e^t,1$ respectively, the estimate \eqref{new-lower-bound} implies that both $\|v^-\|, \|v^+\|\gg d^{-1}$.
It follows that for any $j$ the inequality
\be\label{set-defined}
d(g_1^{(j)} a_t, g_2^{(j)}A)<1
\ee
can hold only for $t$ in some interval $I_j$ of length $\ll \log d$.%$\ll_\eps d^\eps$.

\medskip

\noindent{\bf Claim:} For each pair $(g^{(j)}_1, g^{(j)}_2)$ there is an interval
$I_j\subset \R$
of length $\ll_\eps d^\eps$ with the following property:

 If $(x_1,x_2)\in (\Gd\cap X_{\leq H})^2$ with $d(x_1,x_2)<\delta$
 have representatives $(g_1, g_2)$ satisfying
\eqref{long-list-of-prop}
  for which the associated forms $q_i = \sqrt{d} g_i. q_0$ are different, then $x_1=\Gamma g_1^{(j)} a_{t}$ for some $j$ and some $t\in I_j$.
\medskip

Indeed,  $(\gamma.q_1,\gamma.q_2)=(q_1^{(j)},q_2^{(j)})$ for some $\gamma\in\Gamma$ and some
 $j\in[1,k]$ and so $g_1=\gamma^{-1}g_i^{(j)}a_{t}$ resp.\ $g_2\in\gamma^{-1}g_2^{(j)}A$.
%by completeness of our list of diagonal $\Gamma$-orbits.  Applying $\gamma$ to
% $g_1,g_2$ we get $g_i'=\gamma g_i$ (which now may no longer belong to $\mscS'$). Since
%$$\sqrt{d}g_i'. q_0=q_i^{(j)}=\sqrt{d}g_i^{(j)}.q_0$$ for $i=1,2$, there exists
%some $t_i\in\R$ with $g_i'=g_i^{(j)}a_{t_i}$.
By assumption on $g_1,g_2$ we have $d(g_1^{(j)}a_{t},g_2^{(j)}A)<\delta$.

Using the claim and a fixed Haar measure of $A$ (i.e.\ before normalization) we get that the measure of the collection of points $(x_1,x_2)\in(\Gd\cap X_{\le H})^2$, which can be represented as $x_i=\Gamma g_i$ with $g_i$ as in \eqref{long-list-of-prop} and for which the associated quadratic forms are different, is
\[
   \ll \sum_{j=1}^k |I_j| \delta\ll_\epsilon d^\eps \delta k \ll_\eps  d^{1+2\eps} H^4 \delta^3.
\]
Therefore, by dividing the above by the total volume of $(\Gd)^2$, the claim (together with the analysis of the case $q_1=q_2$)   implies the proposition.
 \qed

\section{An ergodic theoretic proof of Duke's theorem}\label{ergodicsection}
\subsection{Entropy and the unique measure of maximal entropy.}

A basic underlying concept in our proof is that of entropy. We recall that if $\Pcr$ is a partition of the probability space $(X, \nu)$,
the entropy of $\Pcr$ is defined as $$H_{\nu}(\Pcr) := \sum_{S \in \Pcr} - \nu(S) \log \nu(S).$$
It is clear that $H_\nu(\Pcr)=H_\nu(T^{-1}\Pcr)$ if $T:X\rightarrow X$ preserves $\nu$ --- below we will use
this fact without explicit reference.
We note for future reference that entropy is controlled by an $L^2$-norm
\begin{equation} \label{Entropy-L2}
    H_{\nu}(\Pcr) \geq  -\log \left( \sum_{S \in \Pcr}  \nu(S)^2 \right)
\end{equation}
as one easily sees from convexity of the logarithm map.
Moreover, entropy has the following basic
subadditivity property: if $\Pcr_1, \Pcr_2$ are two partitions, then
\begin{equation}\label{subadditivity}
 H_{\nu}(\Pcr_1 \vee \Pcr_2) \leq H_{\nu}(\Pcr_1) + H_{\nu}(\Pcr_2),
\end{equation}
where $\vee$ denotes common refinement.

If $T$ is a measure-preserving transformation of $(X,\nu)$, then the measure theoretic entropy
of $T$ is defined as:
\begin{equation} \label{entropydef}
 h_\nu(T) = \sup_{\Pcr} \lim_{n\rightarrow \infty} \frac{H_{\nu}(\Pcr \vee T^{-1} \Pcr \vee \dots \vee T^{-(n-1)} \Pcr)}{n}
\end{equation}
where the supremum
%\footnote{These definitions may seem at first difficult to work with, which can also be said for the definition
%of the general Lebesgue integral. However, the general theory developed from these definitions helps significantly. E.g.\ the Kolmogorov-Sinai theorem gives sufficient conditions for a partition to achieve the maximum in $h_\nu(T)$. }
is taken over all finite partitions of $X$.
We also note that the limit in the definition exists and is equal to the infimum because the sequence
\[
a_n =  H_{\nu}(\Pcr \vee T^{-1} \Pcr \vee \dots \vee T^{-(n-1)} \Pcr)
\]
is subadditive (i.e.\ $a_{n+m}\leq a_n + a_m$).

A key role in our argument is played by the fact that the uniform measure on $\Gamma \backslash \SL _ 2 (\R)$ for any lattice $\Gamma$ can be distinguished using entropy, as it is the \emph{unique} measure of maximal entropy:

\begin{Theorem} \label{max-entropy-main}
 Let $X=\Gamma\backslash\SL_2(\R)$ be a quotient by a lattice $\Gamma<\SL_2(\R)$, and let $T$
 denote the time-one-map of the geodesic flow, i.e.\ right translation \[T(x)=x\begin{pmatrix}e^{1/2}&0\\0&e^{-1/2}\end{pmatrix}.\] Then for any invariant measure $\nu$ the entropy satisfies
 $h_\nu(T)\leq 1$ where equality holds if and only if $\nu=\mu_X$ is the $\SL_2(\R)$-invariant
 probability measure on $X$.
\end{Theorem}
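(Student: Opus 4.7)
The plan is to prove both bounds via the standard Anosov/Pesin philosophy adapted to our setting. Write $U=\{u_s=(\begin{smallmatrix}1&s\\0&1\end{smallmatrix})\}$ and $U^-=\{u^-_s=(\begin{smallmatrix}1&0\\s&1\end{smallmatrix})\}$ for the unipotent subgroups. Conjugation by $a_t=(\begin{smallmatrix}e^{t/2}&0\\0&e^{-t/2}\end{smallmatrix})$ satisfies $a_t^{-1}u_sa_t=u_{e^{-t}s}$, so right-multiplication by $a_1$ expands the $U$-direction by $e$ and contracts the $U^-$-direction by $e^{-1}$, while preserving the $A$-direction.

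For the upper bound, I would fix a small $\eta>0$ and choose a finite measurable partition $\Pcr$ of $X$ whose atoms all have diameter $<\eta$ (in a fixed compact set containing most of the mass of $\nu$, handling the cusp by a tail argument using e.g.\ Proposition~\ref{all-info-1}). Because $T$ expands $U$ by $e$, any atom of the refined partition $\Pcr_n=\bigvee_{k=0}^{n-1}T^{-k}\Pcr$ has diameter $\leq\eta$ in the $A$- and $U^-$-directions but diameter $\lesssim e^{-n}\eta$ in the $U$-direction. Subdividing each atom of $\Pcr$ along its $U$-direction into $\sim e^n$ cells of length $e^{-n}\eta$ produces a partition $\Qcr_n$ refining $\Pcr_n$ with $|\Qcr_n|\leq C\,e^n$, hence $H_\nu(\Pcr_n)\leq H_\nu(\Qcr_n)\leq \log|\Qcr_n|\leq n+O(1)$. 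Dividing by $n$ and letting $n\to\infty$, then taking the supremum over $\Pcr$, yields $h_\nu(T)\leq 1$.

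For the equality case, suppose $h_\nu(T)=1$. Choose a measurable partition $\Acr$ subordinate to the $U$-foliation (i.e.\ $\Acr(x)\subset xU$ for $\nu$-a.e.\ $x$, and $T^{-1}\Acr\leq\Acr$), and consider the conditional measures $\nu_x^U$ induced on $U$-orbits. The standard Abramov--Rokhlin computation expresses the $U$-leafwise entropy as $h_\nu(T,\Acr)=\int -\log \nu_x^U(T\Acr(x))\,d\nu(x)$; since $T\Acr(x)\subset\Acr(Tx)$ is an interval along $xU$ scaled by $e$ relative to its preimage, the inequality $h_\nu(T,\Acr)\leq 1$ saturates only if $\nu_x^U$ is proportional to Lebesgue measure on $\Acr(x)$ for a.e.\ $x$. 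A measure on $U\simeq\R$ whose restriction to every element of a generating sequence of nested intervals is proportional to Lebesgue must itself be Lebesgue, so $\nu_x^U$ is Haar-on-$U$ up to a scalar; this is equivalent to saying $\nu$ is $U$-invariant. Applying exactly the same argument to $T^{-1}$, whose unstable direction is now $U^-$, gives $U^-$-invariance of $\nu$. Since $U$, $U^-$ and $A$ generate $\SL_2(\R)$ and $\nu$ is $A$-invariant via its $T$-invariance and the Mautner phenomenon (or directly because $A$ normalizes both $U$ and $U^-$ and one checks closure under conjugation), $\nu$ is $\SL_2(\R)$-invariant and hence equal to $\mu_X$.

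The main obstacle is the Ledrappier--Young type step relating equality of entropy to absolute continuity of the $U$-conditional measures. The Abramov--Rokhlin identity requires a partition subordinate to the unstable foliation and with sufficient measurability properties, and in the noncompact setting $\Gamma\backslash\SL_2(\R)$ some care is needed so that the subordinate partition has finite entropy and that the conditional measures are well-defined and $\sigma$-finite on each leaf; this is where one uses that $\nu$ has no escape of mass or at minimum works on a compact set of almost full measure and passes to the limit.
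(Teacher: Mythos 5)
Your outline is essentially correct, but it follows a genuinely different route from the one taken in the paper. You argue via conditional (leafwise) measures on the unstable horocycle foliation: a subordinate decreasing partition, the identity expressing $h_\nu(T)$ as an integral of $-\log$ of conditional measures of shrunk atoms, the Jensen-type equality analysis forcing the $U$-conditionals to be Haar, hence $U$-invariance, then the same for $T^{-1}$ and $U^-$, and finally $\langle U,U^-\rangle=\SL_2(\R)$. This is precisely the ``leafwise measures'' proof that the paper deliberately avoids: Appendix~\ref{Appendix-B} states that such proofs can be found in \cite[Prop.~9.6]{Margulis-Tomanov} and \cite[Thm.~7.9]{Pisa-notes}, and instead gives a direct covering argument in the spirit of Bowen \cite{Bowen} and Adler--Weiss \cite{Adler-Weiss}: Lemma~\ref{cover-entropy} bounds $h_\nu(T)$ by the exponential growth rate of the number of Bowen $(N,\eta)$-balls needed to cover most of the space, Lemma~\ref{cover a little cover a lot} propagates a good covering at scale $N$ to scale $kN$ via the ergodic theorem and shifted Bowen balls, and then a test function $f$ separating $\nu$ from $\mu_X$ produces a compact set $Y_T$ with $\nu(Y_T)>1-\epsilon$ but $\mu_X(Y_TB_{N,\eta_0})<2\epsilon$, so that a volume count gives $BC_{\eta_0}(N,\epsilon,\nu)\ll\epsilon e^{2N}$ and hence $h_\nu(T)<1$ for every $\nu\neq\mu_X$. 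What your route buys is conceptual clarity and the stronger structural statement (Haar conditionals, invariance under both horocycle groups); what the paper's route buys is a self-contained, finitary argument whose ingredients (Bowen-ball covers, the covering Lemma~\ref{cover-entropy}) are exactly the ones reused for the quantitative Theorem~\ref{Measure} and Theorem~\ref{entropy-cusp} controlling mass in the cusp, which is the genuinely new content of the appendix.

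Two caveats on your sketch. First, in the upper bound the partition $\Qcr_n$ obtained by slicing atoms of $\Pcr$ into $U$-slabs does \emph{not} refine $\Pcr_n=\bigvee_{k=0}^{n-1}T^{-k}\Pcr$, so the inequality $H_\nu(\Pcr_n)\leq H_\nu(\Qcr_n)$ is unjustified as written; one needs either the standard count of how many atoms of $\Pcr_n$ sit inside an atom of $\Pcr_{n-1}$ combined with replacing $T$ by $T^m$ to kill the multiplicative constant, or simply the Bowen-ball volume bound as in the paper. Second, the heart of your equality case --- that a subordinate partition computes the full entropy and that saturation forces the leafwise measures to be Haar, and that Haar conditionals are equivalent to $U$-invariance of $\nu$ --- is exactly the nontrivial content of the cited results and needs the full leafwise-measure machinery (with extra care in the noncompact setting), so as a proof it is complete only modulo those references. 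Also note that $T$-invariance gives invariance under $a_1^{\Z}$ only, not under all of $A$, and the Mautner phenomenon is not needed: $U$ and $U^-$ already generate $\SL_2(\R)$.
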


The inequality $h_\nu(T)\leq 1$ is not hard and can be proved in many ways. Identifying the uniform measure as the unique measure where this maximum is attained is somewhat more delicate. We give a self-contained treatment in Appendix~\ref{Appendix-B}.

\subsection{Proof of Duke's theorem, an outline}\label{X compact}
%The non-compactness of $X$ induces, in the ergodic approach, certain difficulties of an essentially technical nature.
%So as to convey the main idea of the proof, we shall first explain
%the proof of Duke's theorem, {\em assuming} that $X$ is compact. (This would be satisfied
%if $X=\Gamma\backslash\SL_2(\R)$ for a lattice $\Gamma<\SL_2(\R)$ defined by a
%division algebra for which a corresponding theorem could be proved.)
%In the next section, we explain how to remove this assumption, thus giving a complete proof for the modular surface.

Let  $T:X\rightarrow X$ denote the time-one-map of the geodesic flow as in Theorem~\ref{max-entropy-main}.
Recall that
\begin{eqnarray*}
 U^-&=&\biggl\{\begin{pmatrix} 1 & t \\ 0 & 1\end{pmatrix}: t\in\R\biggr\}\mbox{ resp.}\\
 U^+&=&\biggl\{\begin{pmatrix} 1 & 0 \\ t & 1\end{pmatrix}: t\in\R\biggr\}
\end{eqnarray*}
are the stable, resp.\ unstable horocycle subgroups. The orbits of these two subgroups give the foliation into stable
and unstable manifolds in the following sense. If $u=u(t)\in U^-$, then the distance between $T^n(x)$ and $T^n(xu)$ converges rapidly to zero:
 \begin{multline*}
 d(T^n(x),T^n(xu))=d\biggl(x\begin{pmatrix}e^{n/2}&0\\0&e^{-n/2}\end{pmatrix},xu\begin{pmatrix}e^{n/2}&0\\0&e^{-n/2}\end{pmatrix}\biggr)\\
 \leq d\biggl(\begin{pmatrix}1&0\\0&1\end{pmatrix}, \begin{pmatrix}e^{-n/2}&0\\0&e^{n/2}\end{pmatrix} u \begin{pmatrix}e^{n/2}&0\\0&e^{-n/2}\end{pmatrix}\biggr)
 \\ = d\biggl(\begin{pmatrix}1&0\\0&1\end{pmatrix}, \begin{pmatrix} 1 & e^{-n}t \\ 0 & 1\end{pmatrix}\biggr).
\end{multline*}

To give an outline of our argument, it is perhaps preferable to simplify the situation. In our proof, the noncompact nature of our space $X$ is a significant complication, so instead of considering the quotient $\SL _ 2 (\R) \backslash \SL _ 2 (\R)$ for the purposes of this outline let us consider a compact quotient $\hat X = \Gamma \backslash \SL _ 2 (\R)$ on which we have a sequence of $T$-invariant probability measures $\mu _ d$ satisfying the following simplified version of the conclusion of Corollary~\ref{all-info-2}
\begin{equation} \label{simplified Linnik lemma}
\mu_{d} \times \mu_{d} \{ (x,y) \in \hat X ^2: d_{\hat X}(x,y)
\leq \delta \} \ll_\varepsilon \delta^3 d^{\varepsilon} \qquad \text{for $\delta > d ^ {- 1 / 4}$}.
\end{equation}

Let $r>0$ be an injectivity radius of $\hat X$ so that for any $x \in \hat X$ the map $B_r^{G}(e)\rightarrow \hat X$  sending $g$ to $xg$ is injective (with $G={\SL_2(\R)}$, and $B_r^G$ denoting a ball of radius $r$ in $G$).
Also assume $\eta<\frac 1er$ is small enough so that $B_\eta^{G}(e)$ is an injective
image under the exponential map of a neighborhood of $0$ in the Lie algebra.

Let $\Pcr$ be a finite measurable partition all of whose elements have ``diameter smaller than $\eta$'', i.e.  if  $x$ and $y=xg$ with $g\in B_r^{G}$ belong to the same element of $\Pcr$, then $g\in B_\eta^{G}$. Assume that
the same holds as well for $T^i(x)$ and $T^i(y)$ for $i=-N,\ldots, 0,1,\ldots,N$.
Then $d(T(x),T(y))<\eta$ and $d(e,a^{-1}ga)<r$ so that $a^{-1}ga\in B_\eta^{G}(e)$.
Repeating this implies that
\[
  g\in B_N=\bigcap_{n=-N}^N \begin{pmatrix}e^{1/2}&\\&e^{-1/2}\end{pmatrix}^{-n}
  B_\eta^{G}(e)\begin{pmatrix}e^{1/2}&\\&e^{-1/2}\end{pmatrix}^n.
\]
We define a Bowen $N$-ball to be the translate $xB_N$ for some $x\in X$.

Notice that the set $B_N$ is ``tube-like'': it has
width at most $e^{-N}\eta$ along the stable and unstable directions, but is
of length $\eta$ in the direction $A$ of the geodesic flow.
The above shows that every element of the partition
\begin{equation}\label{refined partition equation}
\Pcr^{[-N,N]}=\bigvee_{n=-N}^NT^{-n}\Pcr
\end{equation}
is contained in a single Bowen $N$-ball. Together we conclude that
$$
\bigcup_{S \in  \Pcr^{[-N,N]}} S \times S \subset \bigcup_{i=1}^k \{(x,ya_i):d(x,y)<re^{-N}\}
$$
where $k \ll e^{N}$ and $a_1,\ldots,a_k \in B_r^A(1)$ are chosen to be $\delta$-dense --
that is to say, the union of the $\delta$-neighbourhoods around $a_i$ cover
$B_r^A(1)$.

Together with \eqref{simplified Linnik lemma} this shows that
$$
\sum_{S \in  \Pcr^{[-N,N]}}\mu_d(S)^2 \ll_\varepsilon e^{-2N}d^\varepsilon
$$
whenever $\delta=\eta e^{-N}\geq d^{-\frac{1}{4}}$ or equivalently $N \leq \frac{1}{4}\log d+\log r$.
We choose $N=\lfloor \frac{1}{5}\log d\rfloor$
(the ``extra space'' will be useful in supressing a $d^\varepsilon$).
Using \eqref{Entropy-L2} we have
$$
 H_{\mu_d}\bigl( \Pcr^{[-N,N]}\bigr)\geq (2-6\varepsilon)N
$$
for large enough $d$.

In this statement we cannot yet let $d\rightarrow\infty$ to get
a statement about a weak$^*$ limit $\mu$, because $N$ is a function of $d$, and so
the size of $\Pcr^{[-N,N]}$ increases with $d$.  Thus let $N_0\geq 1$
be any fixed integer:  $[-N,N]$ can be covered by $\lceil\frac{N}{N_0}\rceil$ many
translates of $[-N_0,N_0]$. This in turn shows that $\Pcr^{[-N,N]}$ can be obtained
as a refinement of the $\lceil\frac{N}{N_0}\rceil$ partitions
\[
\Pcr^{[-N,-N+2N_0]}, \Pcr^{[-N + 2 N _ 0,-N+4N_0]}, \dots
\]
(in the obvious generalization of the notation \eqref{refined partition equation}).
By subadditivity \eqref{subadditivity} (and invariance) this implies
\[
  H_{\mu_d}\bigl( \Pcr^{[-N_0,N_0]}\bigr)\geq (2-7\varepsilon)N_0
\]
for large enough $d$.
By choosing the original partition $\Pcr$ such that $\mu(\partial S)=0$ for all $S\in \Pcr$
and some weak$^*$ limit $\mu$ of the sequence $\mu_d$ we can now take the limit as $d\rightarrow\infty$
to obtain
$$
  H_{\mu}\bigl( \Pcr^{[-N_0,N_0]}\bigr)\geq (2-7\varepsilon)N_0\mbox{ for all $\varepsilon>0$ and $N_0\geq 1$},
$$
i.e.\ that $h_\mu(T)\geq 1$. Theorem \ref{max-entropy-main} can now be invoked to show that $\mu$ must be the  $\SL_2(\R)$-invariant measure on $X$.

We remark that the analysis above works only in the cocompact case; for e.g.
$\Gamma = \SLdZ$, there is no global injectivity radius; and no matter how fine one takes the partition $\Pcr$, to cover a single atom of the partition $\Pcr^{[-N,N]}$ one typically needs exponentially many Bowen $N$-balls.

\subsection{Proof of Duke's theorem, controlling the time spent near the cusp.}\label{non-compact}
Passing from the cocompact to the nonuniform case raises two difficulties:
\begin{itemize}
\item[(i)] Why is such a weak$^*$ limit a probability measure (indeed, why can't such a sequence of measures $\mu _ d$ converge to the zero measure)?  \item[(ii)] The proof outline presented in \S\ref{X compact} used heavily the relation between Bowen $N$-balls and atoms of the partition $\Pcr^{[-N,N]}$ for a finite partition $\Pcr$. How can we adapt  this argument to the nonuniform situation where  in general many Bowen $N$-balls are needed to cover a partition element $S \in\Pcr^{[-N,N]}$?
\end{itemize}

It turns out that these two difficulties are not unrelated, and to handle them
one needs
to control the time an orbit spends in the neighborhood of the cusp, so that this problem
is related to controlling the escape of mass.
What is needed is the following finitary version of the uniqueness of measure of maximal entropy:

\begin{Theorem} \label{Measure}
Suppose $\mu_i$ is a sequence of $A$-invariant measures on $X$, and suppose
there is a a constant $r>0$ and a sequence $\delta_i \rightarrow 0 $ such that for all sufficiently small $\varepsilon>0$
the ``heights'' $H_i=\delta_i^{-\varepsilon}$ satisfy
\begin{enumerate}
\item $\mu_i ( X_{\geq H_i}) \rightarrow 0$, as $i \rightarrow \infty$;
\item $\mu_i \times \mu_i (\{(x,y) \in X_{\leq H_i} \times X_{\leq H_i}: d(x,y)<\delta_i \}
    \ll_{\varepsilon} \delta_i^{3- 5\varepsilon}$.
\end{enumerate}
Then $\mu_i \rightarrow \mu_X$, the $\SL_2(\R)$-invariant measure on $X$, as $i \rightarrow \infty$.
\end{Theorem}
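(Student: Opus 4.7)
The plan is to adapt the cocompact argument of \S\ref{X compact} to the noncompact quotient $X$, using condition~(2) to produce the Bowen-ball entropy lower bound in the thick part and condition~(1) to discard the contribution from the cusp. The endpoint is to show that every weak-$*$ subsequential limit $\mu$ of $(\mu_i)$ satisfies $h_\mu(T) \geq 1$. Since for a subprobability $\mu$ one has $h_\mu(T) \leq \mu(X)\cdot h_{\mu/\mu(X)}(T) \leq \mu(X)$ by Theorem~\ref{max-entropy-main}, this forces $\mu(X)=1$, and then the equality case of Theorem~\ref{max-entropy-main} identifies $\mu$ as $\mu_X$.

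Concretely, I would fix a large height $H^{\ast}$ and a finite partition $\Pcr = \{S_0, S_1, \ldots, S_k\}$ of $X$ with $S_0 = X_{\geq H^{\ast}}$, each $S_j \subset X_{\leq H^{\ast}}$ $(j \geq 1)$ of diameter at most some $\eta$ smaller than the injectivity radius on the compact part, and $\mu(\partial S_j)=0$ for every $j$. Set $N_i = \lfloor -\frac{1}{5}\log \delta_i \rfloor$, and split the atoms of the refined partition $\Pcr^{[-N_i,N_i]}$ into \emph{good} atoms---those whose entire orbit segment $\{T^n x : |n|\le N_i\}$ stays in $X_{\leq H_i}$---and \emph{bad} atoms, for which some $T^n x\in X_{\geq H_i}$. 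For the good atoms the analysis of \S\ref{X compact} carries over: each such atom lies inside a Bowen $N_i$-tube of transverse thickness $\eta e^{-N_i}\approx \delta_i$, and after discretising along the $A$-direction and applying condition~(2) at scale $\delta_i$ one obtains $\sum_{S\text{ good}} \mu_i(S)^2 \ll e^{-2N_i}\delta_i^{-O(\varepsilon)}$. The bad atoms, by $A$-invariance and a union bound, have total $\mu_i$-mass at most $(2N_i+1)\mu_i(X_{\geq H_i})$, which tends to $0$ by condition~(1), possibly after passing to a subsequence to arrange a sufficient rate.

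Combining these via the entropy identity
\[
H_{\mu_i}(\mathcal Q) \;=\; \mu_i(A)\,H_{\mu_i|_A}(\mathcal Q|_A) + \mu_i(A^c)\,H_{\mu_i|_{A^c}}(\mathcal Q|_{A^c}) + H(\mu_i(A),\mu_i(A^c))
\]
applied to $\mathcal Q = \Pcr^{[-N_i,N_i]}$ and $A = \bigcup\{S:S\text{ good}\}$, together with the $L^2$-bound \eqref{Entropy-L2}, yields $H_{\mu_i}(\Pcr^{[-N_i,N_i]}) \geq (2-O(\varepsilon))N_i$. By subadditivity \eqref{subadditivity}, exactly as in \S\ref{X compact}, this descends to $H_{\mu_i}(\Pcr^{[-N_0,N_0]}) \geq (2-O(\varepsilon))N_0$ for every fixed $N_0$; passing to the weak-$*$ limit---valid since $\mu(\partial S_j)=0$---gives the same bound for $\mu$, and then sending $N_0\to\infty$ and $\varepsilon \to 0$ delivers $h_\mu(T) \geq 1$.

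The main obstacle is the delicate bookkeeping near the cusp. One has to show that the bad atoms contribute negligibly to both the mass and the entropy, so that the potentially exponential combinatorial count of possible ``cusp itineraries'' in $[-N_i,N_i]$ does not swamp their small individual mass; this is where the quantitative decay in condition~(1) and the $A$-invariance of $\mu_i$ must be used together. A secondary technical issue is that the injectivity radius $r(H_i)\sim H_i^{-1}=\delta_i^{\varepsilon}$ shrinks with $i$, so that the scale $\eta$ of the initial partition may have to be chosen as a slowly decaying function of $i$ in order that the Bowen-ball covering argument in $X_{\leq H_i}$ remain valid uniformly. This careful interplay between $H_i$, $\delta_i$ and $N_i$ is in effect a finitary, cuspidal version of the variational principle, and constitutes the technical heart of the theorem.
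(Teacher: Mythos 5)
The overall strategy you outline (entropy lower bound for weak$^*$ limits, then Theorem~\ref{max-entropy-main} plus an entropy-versus-mass argument to force total mass one) is indeed the paper's strategy, but the two steps on which your plan rests are exactly the ones that do not carry over from \S\ref{X compact}, and neither is justified in your write-up. First, the claim that each ``good'' atom of $\Pcr^{[-N_i,N_i]}$ lies in a single Bowen $N_i$-tube is false: your partition is fixed and contains the single unbounded atom $S_0=X_{\geq H^{\ast}}$, while good atoms are only required to stay below the growing height $H_i=\delta_i^{-\varepsilon}$. During the times the orbit is at heights between $H^{\ast}$ and $H_i$, membership in the same atom ($S_0$) gives no control at scale $\eta$, and the injectivity radius there is far smaller than $\eta$; as remarked at the end of \S\ref{X compact}, such an atom typically needs exponentially many Bowen $N$-balls to cover it. Letting $\eta$ shrink with $i$ does not repair this: either $S_0$ stays macroscopic (same problem), or the partition becomes $i$-dependent with unboundedly many atoms, and then the final step --- transferring the bound $H_{\mu_i}(\Pcr^{[-N_0,N_0]})\geq(2-O(\varepsilon))N_0$ to a \emph{fixed} partition for the weak$^*$ limit --- breaks, since a lower bound for an ever finer partition says nothing about a fixed coarse one. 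The paper's replacement for this missing step is Proposition~\ref{cover-lemma} together with Lemma~\ref{partition}: orbit segments with prescribed cusp itinerary $V$ are covered by $\ll_M e^{2N-\frac12|V|}$ Bowen balls, only $e^{o(N)}$ itineraries occur, and atoms spending at most a $\kappa$-fraction of time above a fixed height cost only an additional conditional entropy $O(\kappa N)$. None of this cusp analysis appears in your proposal, and it is the technical heart of the theorem, as you yourself note at the end.

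Second, your treatment of the ``bad'' atoms is not valid under the stated hypotheses. The union bound gives total bad mass at most $(2N_i+1)\,\mu_i(X_{\geq H_i})$, but condition (1) carries no rate: $\mu_i(X_{\geq H_i})\to 0$ while $N_i\asymp\log(1/\delta_i)\to\infty$, so the product need not tend to zero, and ``passing to a subsequence to arrange a sufficient rate'' accomplishes nothing, since the value of $(2N_i+1)\mu_i(X_{\geq H_i})$ at each retained index is unchanged. In the paper, non-escape of mass is not a soft consequence of (1); Lemma~\ref{non-escape} proves it by combining hypothesis (2), the covering bound of Proposition~\ref{cover-lemma} and Cauchy--Schwarz to show that the set of points spending a $\kappa$-fraction of their time above a \emph{fixed} height $M$ has $\mu_i$-measure tending to zero, with (1) used only to place the orbit below a fixed height within a window of length $O(\varepsilon N_i)$. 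So both pillars of your argument --- the Bowen-tube containment for good atoms and the negligibility of bad atoms --- require the quantitative analysis of trajectories high in the cusp that your proposal defers, and as written the proof does not close.
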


Clearly, this, Proposition \ref{all-info-1}, and Proposition \ref{all-info-2} with $\delta=d^{-\frac{1}{4}}$
%(applied to some $\varepsilon_1<\frac{1}{4}\varepsilon$)
are sufficient to prove Duke's theorem.   Apart
from the ideas already discussed in the last section, the main additional step
is:

\begin{Proposition}\label{cover-lemma}
 Fix a height $M\geq 1$. Let $N\geq 1$ and
 consider a subset $V\subset[-N,N]$.
 Then the set
 \begin{multline*}
  Z(V)=\Bigl\{x\in T^N X_{< M}\cap T^{-N} X_{< M}: \mbox{ for all $n\in[-N,N]$ we have}\\
   T^n(x)\in X_{\geq M}\Leftrightarrow n\in V\Bigr\}
 \end{multline*}
 can be covered by $\ll_{M}e^{2N-\frac{1}{2}|V|}$ Bowen $N$-balls.
 Moreover, $Z(V)$ is nonempty for only  $\ll_M e^{\frac{2\log\log M}{\log M}N}$
 different sets $V \subset [-N,N]$.
\end{Proposition}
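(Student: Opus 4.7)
The plan is to prove the two assertions separately: the covering estimate for $Z(V)$, and the counting of admissible itineraries $V$. In both arguments the key structural fact is that $V$ must be a disjoint union of intervals $I_1,\ldots,I_k\subset[-N,N]$, with lengths $L_1,\ldots,L_k$, each $I_j$ corresponding to a maximal subsegment of the continuous orbit lying in $X_{\ge M}$. Since the orbit is forced to begin and end (at times $\pm N$) in $X_{<M}$, each excursion is a genuine ``up and down'' trip into the cusp.

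For the covering bound, I would build on the compact case sketched in \S\ref{X compact}. Fix $\eta>0$ small and choose a finite partition $\Pcr$ of $X_{<M}$ into $O_M(1)$ cells of diameter $<\eta$. The baseline is that on $Z(\emptyset)$ the refined partition $\Pcr^{[-N,N]}$ yields $\ll_M e^{2N}$ nonempty atoms, each contained in a single Bowen $N$-ball. For $V\neq\emptyset$, the savings per excursion come from the cusp geometry: an excursion of length $L_j$ reaches maximum height $H_j\asymp Me^{L_j/4}$ (since $\Ht$ changes by $e^{t/2}$ along the flow), and at height $\Ht=H$ the stable horocycle direction $U^-$ has period $H^{-2}$ in $X$ owing to the modular identification $z\sim z+1$. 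A volume computation (or equivalently, tracking how Bowen $N$-balls wrap once $H^2\geq e^N/\eta$) then shows that the effective $X$-measure of the set of orbits undergoing the prescribed profile is suppressed by a factor $\prod_j H_j^{-2}\asymp M^{-2k}e^{-|V|/2}$ relative to the bulk. Dividing this by the $X$-volume $\asymp e^{-2N}$ of a Bowen $N$-ball produces the desired count $\ll_M e^{2N-|V|/2}$.

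For the counting assertion, I would parametrize $V$ by the pairs $(t_j,L_j)$ of positions and lengths of its excursion intervals. The positions alone give at most $\binom{2N+1}{2k}$ choices for $k$ intervals. The lengths $L_j$ are further constrained by cusp geometry: an excursion of length $L_j$ requires the orbit to enter the cusp at a nearly-vertical direction calibrated to depth $\asymp Me^{L_j/4}$, and the geometric ``width'' of valid entries decays with depth. Classifying excursion depths dyadically yields $O(\log\log M)$ scales of nontrivial length per excursion, and the number of deep excursions that can even fit in $[-N,N]$ is $k\ll N/\log M$. Multiplying the number of length-classifications against the position choices should give the sub-exponential bound $\ll_M e^{(2\log\log M/\log M)N}$.

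The main obstacle I anticipate is making the cusp analysis in Part~1 quantitatively sharp: the exponent $1/2$ in the saving $e^{-L_j/2}$ (rather than some other power) reflects that only a single horocyclic direction undergoes the modular identification at each point of the cusp --- the direction aligned with the cusp axis --- while the transverse horocyclic direction continues to contract at the full rate $e^{-N}$. Making this precise requires a uniform volume estimate for the ``orbit tube'' of a prescribed excursion profile, carefully comparing the Bowen width $\eta e^{-N}$ to the horocycle period $H^{-2}$ at every height $H$ traversed during an excursion. The counting argument of Part~2 is combinatorial in flavor but hinges on identifying the correct catalogue of realizable cusp profiles, which is where the factor $\log\log M/\log M$ must emerge.
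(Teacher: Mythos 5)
Your second assertion (the count of admissible sets $V$) is essentially sound and in fact close to the paper's own argument: the only geometric input needed is that between two maximal stretches above height $M$ the orbit must descend to bounded height and climb back up, which costs $\gg \log M$ steps because the height is unimodal along a cusp excursion and changes by a factor $e^{\pm1/2}$ per step; this gives $k\ll N/\log M$ excursion intervals, and the binomial count of their $2k$ endpoints in $[-N,N]$ then yields $e^{O(\frac{\log\log M}{\log M})N}$ (the paper instead refines windows of length $\asymp\log M$, each admitting $\ll\log^2M$ patterns, which is the same count). But your stated reasons --- ``nearly-vertical entry calibrated to depth'', a ``dyadic classification of depths into $O(\log\log M)$ scales'' --- are not where this exponent comes from and would not survive being made precise: the constraint is on the \emph{gaps} between excursions, not on their lengths (an excursion above $M$ can last a single step), and the separation $\gg\log M$, which is the one fact you actually need to get $k\ll N/\log M$, is asserted but never justified.

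The covering bound is where the genuine gap lies. Your plan is: bound the Haar measure of the set of orbits with the prescribed excursion profile by $\prod_j H_j^{-2}\asymp M^{-2k}e^{-|V|/2}$, then divide by the volume $\asymp e^{-2N}$ of a Bowen $N$-ball. The second step can be organized legitimately (a maximal family of disjoint $(N,\eta/2)$-Bowen balls centered in $Z(V)$, together with a check that these balls are injective --- which they are for $\eta\ll_M1$, since heights occurring along orbits of $Z(V)$ are $\ll Me^{N/2}$, so the relevant parabolic identifications have size $\gg M^{-2}e^{-N}$, exceeding the horocyclic extent $\eta e^{-N}$ of a Bowen ball; in particular the ``wrapping of Bowen balls once $H^2\geq e^N/\eta$'' that you invoke as the mechanism does not actually occur at the relevant scales). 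But the first step --- the product bound over several excursions --- is exactly the hard content of the proposition, and ``a volume computation shows'' is not a proof. For a single excursion it reduces to $\mu_X(X_{\geq H})\asymp H^{-2}$; for $k\geq2$ excursions you need quasi-independence of the events $T^{t_j}x\in X_{\geq H_j}$ with constants uniform in $k$, i.e.\ a Markov-type estimate for successive cusp visits, and nothing in your sketch supplies it. The paper's proof supplies precisely this missing ingredient, in covering rather than measure form: proceeding interval by interval through the excursions, it shows by an explicit computation with the endpoints $\alpha_\pm$ of the lifted geodesics that two orbits performing the same excursion of (padded) length $S$ from the same $U^-A$-slab must have unstable parameters within $\ll e^{-S/2}$ of each other (the radius of the corresponding semicircle is forced to be $e^{S/2}$ up to constants), so only $\ll e^{S/2}$ of the $e^{S}$ unstable subdivisions survive each excursion, and these savings multiply along the induction. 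If you wish to keep your measure-theoretic route, you would have to prove the multi-excursion quasi-independence directly (for instance by conditioning at the return times below height $M$); that is a genuinely different but currently unproven lemma of essentially the same depth as the statement you are trying to establish.
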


In words, $Z (V)$ is the set of points $x \in X$ so that their trajectory $T ^ {- N} x$, $T ^ {- N + 1} x$, \dots, $T ^ N x$ between times $- N$ and $N$ begins and ends below height $M$ and are above height $M$ precisely at the time specified by the set $V$.
So the content of the Proposition is that orbits that spend a lot of time in a neighborhood of the cusp in fact
can be covered by relatively few tube-like sets.
Later we will turn this
into the statement that those orbits have relatively little mass.

Note that as the size of $V$ grows the number of Bowen $N$-balls needed to cover $Z (V)$ decreases, though even if $V=[-N-1,N+1]$  it is still exponential --- indeed $\asymp e^N$, which is essentially the square root of the estimate we get for $V = \emptyset$ .

We defer the proof of the Proposition \ref{cover-lemma}
to the next section.
A purely ergodic theoretic formulation of this phenomena is that a lot of mass near the cusp
for an invariant probability measure results in a significantly smaller entropy for the geodesic flow.
We will give such a formulation in Theorem \ref{entropy-cusp}; it implies in particular that:

\begin{quote}
Given a sequence $T$-invariant probability measures $\mu_i$ with entropies $h_{\mu_i}(T)\geq c$, any weak   weak$^*$ limit $\mu$ satisfies $\mu(X)\geq 2c-1$.
\end{quote}

We will discuss in Remark \ref{exp-1/2} why $c=1/2$ is the critical point for this phenomenon.

\subsection{Controlling escape of mass, and maximal entropy}
We proceed to the proof of Theorem~\ref{Measure}, and start by showing that mass cannot escape, using assumption (2). We will use (1) of that theorem which gives a mild control on how fast mass could possibly escape to be able to apply the covering argument in Proposition \ref{cover-lemma}.
That (2) can replace entropy in that argument is not surprising since we have already seen in Section \ref{X compact}
a relationship between this  assumption and entropy.

\begin{Lemma}\label{non-escape}
 Let $\mu_i$ be a sequence of $T$-invariant measures as in Theorem~\ref{Measure}. Let $\mu$
 be a weak$^*$ limit of any subsequence of $\mu_i$. Then
 \[
  \mu(X_{<M})\geq 1-\frac{2\log\log M}{\log M}
 \]
 for every sufficiently large $M$, and so $\mu$ is a probability measure.
\end{Lemma}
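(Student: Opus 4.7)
My approach would be to first reduce the lemma to a uniform upper bound on $\mu_i(X_{\geq M})$ for large $M$ and large $i$, then pass to the weak$^*$ limit using continuity of the height function. Since $\mu$ is a finite Borel measure, all but countably many $M > 0$ satisfy $\mu(\{x : \Ht(x) = M\}) = 0$; for any such continuity height $M$, weak$^*$ convergence yields $\mu(X_{\geq M}) = \lim_i \mu_i(X_{\geq M})$, so it suffices to prove $\limsup_i \mu_i(X_{\geq M}) \leq \frac{2\log\log M}{\log M}$ for sufficiently large $M$.

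For each $i$, fix $N = N_i$ with $e^{-N_i}\eta \asymp \delta_i$; then $N_i \to \infty$ and, because $H_i = \delta_i^{-\varepsilon}\to\infty$, we have $H_i > M$ for $i$ large. By $T$-invariance of $\mu_i$,
\[
\mu_i(X_{\geq M}) = \frac{1}{2N_i+1}\int_X |V_{N_i}(x)|\, d\mu_i(x),\qquad V_N(x) := \{n\in [-N,N] : T^n x\in X_{\geq M}\},
\]
so I would decompose according to the pattern $V_{N_i}(x)$, using the cells $Z(V)$ from Proposition~\ref{cover-lemma}. These cover $T^{N_i}X_{<M}\cap T^{-N_i}X_{<M}$, whose complement has $\mu_i$-measure at most $2\mu_i(X_{\geq M})$; each $Z(V)$ is covered by $\ll_M e^{2N_i - |V|/2}$ Bowen $N_i$-balls, and only $\ll_M e^{\frac{2\log\log M}{\log M}N_i}$ values of $V$ yield nonempty cells.

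The main estimate is an $L^2$ argument: subdividing each Bowen $N_i$-ball in the $A$-direction into $\sim e^{N_i}$ pieces of length $\delta_i$ produces a cover of $Z(V)$ by $\ll_M e^{3N_i-|V|/2}$ subsets of diameter $\lesssim \delta_i$. Since pieces from cells with $0 \notin V$ lie in $X_{<M}\subset X_{<H_i}$, assumption (2) bounds the sum of their squared $\mu_i$-masses by $\ll \delta_i^{3-5\varepsilon}$. A Cauchy--Schwarz computation, weighted by $|V|/(2N_i+1)$ from the invariance identity above and combining both the covering count and the sparsity count on admissible $V$, reduces to $\mu_i(X_{\geq M}) \leq \frac{2\log\log M}{\log M} + O(\varepsilon)$; the contribution $\mu_i(X_{\geq H_i})$ not captured by the covering is handled separately using assumption (1). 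Letting $\varepsilon \to 0$ completes the argument.

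The main obstacle will be the precise calibration of the Cauchy--Schwarz step so that all exponential-in-$N_i$ factors cancel. The covering factor $e^{-|V|/2}$ reflects the tube-like contraction of Bowen balls when orbits approach the cusp, and the sparsity factor $e^{\frac{2\log\log M}{\log M}N_i}$ reflects the geometric restrictions on visit patterns to $X_{\geq M}$; both are essential to avoid an uncontrolled $e^{cN_i}$ blow-up. The weighting by $|V|/(2N_i+1)$ is what ensures only cells with significant cusp time contribute, matching the covering savings $e^{-|V|/2}$. That the rate $\frac{2\log\log M}{\log M}$ in the conclusion matches the sparsity rate from Proposition~\ref{cover-lemma} is not coincidental but rather the heart of the argument.
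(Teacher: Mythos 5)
There is a genuine gap at the point where you pass from the cells $Z(V)$ to the full space. Proposition~\ref{cover-lemma} only covers points whose trajectory is below height $M$ at the two endpoint times $\pm N_i$, i.e.\ points of $T^{N_i}X_{<M}\cap T^{-N_i}X_{<M}$. The complement of this set has $\mu_i$-measure bounded by $2\mu_i(X_{\geq M})$ -- which is exactly the quantity you are trying to bound, and its contribution to your averaged identity $\mu_i(X_{\geq M})=\frac{1}{2N_i+1}\int |V_{N_i}(x)|\,d\mu_i$ can be as large as the full measure of that complement (such points may spend essentially all of $[-N_i,N_i]$ above $M$). So the best you can write is $\mu_i(X_{\geq M})\leq \text{(small)}+2\mu_i(X_{\geq M})$, which is vacuous; and your statement that the uncovered contribution is ``$\mu_i(X_{\geq H_i})$, handled by assumption (1)'' conflates the two heights: assumption (1) of Theorem~\ref{Measure} controls mass above $H_i=\delta_i^{-\varepsilon}$, not above the fixed height $M$. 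The paper's proof is built precisely to avoid this circularity: it starts from $T^{N_i'}X_{\leq H_i}\cap T^{-N_i'}X_{\leq H_i}$ (whose complement has measure $\leq 2\mu_i(X_{\geq H_i})\to 0$ by assumption (1)), observes that a trajectory starting below $H_i$ reaches $X_{<M}$ within $2\varepsilon N_i$ steps, and therefore decomposes into $\ll(\varepsilon N_i)^2$ shifted sets of the form $T^{N_i'+n_-}X_{<M}\cap T^{-N_i'+n_+}X_{<M}$, each of which can be translated to a set to which Proposition~\ref{cover-lemma} applies with a window of length $N\in[N_i,N_i+4\varepsilon N_i]$. Without some version of this device your argument does not close.

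A secondary calibration issue: if, as your sketch suggests, you apply Cauchy--Schwarz to each cell $Z(V)$ separately and then sum over the $\ll_M e^{\frac{2\log\log M}{\log M}N_i}$ admissible $V$, the $V$-count enters with exponent $1$ and the threshold you obtain is $\kappa>\frac{4\log\log M}{\log M}$, i.e.\ only $\mu(X_{<M})\geq 1-\frac{4\log\log M}{\log M}$, weaker than the stated bound (though still enough to rule out escape of mass). The paper instead lumps all admissible $V$ together into one family of Bowen balls \emph{before} Cauchy--Schwarz, so the number of patterns also gets the square-root saving, yielding the stated constant $\frac{2\log\log M}{\log M}$. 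Also note that your reduction step should be phrased via the relatively compact set $X_{<M}$ (weak$^*$ convergence gives $\mu_i(X_{<M})\to\mu(X_{<M})$ for continuity values of $M$); the identity $\mu(X_{\geq M})=\lim_i\mu_i(X_{\geq M})$ presupposes no escape of mass, which is what is being proved.
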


\begin{proof}
 Fix some $\kappa>\frac{2\log\log M}{\log M}$. We will show that $\mu(X_{<M})\geq 1-\kappa$.

 We set $N_i=\lceil -\log \delta_i\rceil$ and $H_i=\delta_i^{-\epsilon}$ for some $\epsilon>0$
 determined below (more precisely:  before the final displayed equation of this proof) in terms of $\kappa$. Notice that a geodesic trajectory of a point $x\in X_{\leq H_i}$ will visit
 $X_{<M}$ in less than $2\log H_i-2\log M\leq2\epsilon N_i$ steps either in the future or in the
 past. Hence
 \[
  \bigcup_{n=-\lfloor2\epsilon N_i\rfloor}^{\lfloor2\epsilon N_i\rfloor}T^{-n}X_{<M}\supset X_{\leq H_i}
 \]
 and so this union contains most of the $\mu_i$-mass according to the assumption (1)
 of Theorem ~\ref{Measure}.

 Let $N'_i=N_i+\lfloor2\epsilon N_i\rfloor$.
 Then $T^{N_i'}X_{\leq H_i}\cap T^{-N_i'}X_{\leq H_i}$ is contained in the union of $\ll (\epsilon N_i)^2$
  many sets of the form
 $T^{N_i'+n_-}X_{<M}\cap T^{-N_i'+n_+}X_{<M}$ where $|n_-|,|n_+|\leq 2\epsilon N_i$.
 We apply this to the set
 \[
  X_\kappa=\biggl\{x\in T^{N_i'}X_{\leq H_i}\cap T^{-N_i'}X_{\leq H_i}:
  \frac{1}{2N_i'+1}\sum_{n=-N'_i}^{N_i'}1_{X_{\geq M}}(T^nx)>\kappa\biggr\}
 \]
 consisting of points that spend an unexpected high portion of $[-N_i',N_i']$ above $M$.

 We wish to estimate $\mu_i(X_\kappa)$. $X_\kappa$ is also a union of sets of the form
 \[Z'=X_\kappa\cap T^{N_i'+n_-}X_{<M}\cap T^{-N_i'+n_+}X_{<M}\]
 with $n_-,n_+$ as before.  It suffices to estimate $\mu_i(Z')$
 for some fixed $n_-, n_+$.   Replacing $Z'$ by an appropriate
 shift $Z:=T^k Z'$ we may consider instead $Z\subset T^{N}X_{<M}\cap T^{-N}X_{<M}$ where $N\in [N_i,N_i+4\epsilon N_i]$.
 Adjusting the condition on the ``average time spent above $M$'' appropriately,
 \[
  Z\subseteq\biggl\{x\in T^{N}X_{<M}\cap T^{-N}X_{<M}:
   \frac{1}{2N+1} \sum_{n=-N}^{N}1_{X_{\geq M}}(T^nx)>\kappa-O(\epsilon)\biggr\}.
 \]
To the right-hand set we apply Proposition \ref{cover-lemma}; which
shows that $Z$ is covered   by
 \[
  \ell\ll_M e^{\frac{2\log\log M}{\log M}N}e^{2N-(\kappa-O(\epsilon))N}\leq
   e^{2N_i+\frac{2\log\log M}{\log M}N_i-\kappa N_i+O(\epsilon) N_i}
 \]
 many Bowen $N$-balls. Because $N \geq N_i$,
we may also cover $Z$ by $\ell$ many Bowen $N_i$-balls $S_1,\ldots,S_\ell$.

 Since Bowen $N_i$-balls have thickness $\leq e^{-N_i}\leq\delta_i$ along stable and unstable
 horocycle directions and thickness $\ll 1$ along $A$, we get that
 \[
  \bigcup_{j=1}^\ell S_j\times S_j\subset\bigcup_{j=1}^k\{(x,ya_j):d(x,y)<\delta_i\}
 \]
 where $k\ll e^{N_i}$ and $a_j\in B_1^A$ are $\delta_i$-dense. This remains true if we
 make the sets $S_j$ disjoint by replacing $S_2$ by $S_2'=S_2\setminus S_1$, $S_3$ by $S_3'=S_3\setminus (S_1\cup S_2)$, \ldots.
 By our assumption (2) we now get
 \[
  \sum_{j=1}^\ell \mu_i(S_j')^2\ll_\epsilon \delta_i^{3-5\epsilon}k\ll e^{-2N_i+5\epsilon N_i}.
 \]
 Therefore, by Cauchy-Schwarz
 \[
  \mu_i(Z)\leq\sum_{j=1}^\ell \mu_i(S_j') \leq \biggl(\sum_{j=1}^\ell \mu(S_j')^2\biggr)^{1/2}\ell^{1/2}\ll_{\epsilon,M}
  e^{\frac{\log\log M}{\log M}N_i-\frac12\kappa N_i+O(\epsilon) N_i}
 \]
 Going through all possibilities for $n_-,n_+$ (of which there are $\ll e^{\epsilon N_i}$ many) this implies
 $$
  \mu_i(X_\kappa)\ll_{\epsilon,M} e^{\bigl(\frac{\log\log M}{\log M}-\frac{1}{2}\kappa +O(\epsilon)\bigr) N_i}.
 $$
 Given that we assume $\kappa>\frac{2\log\log M}{\log M}$ we can choose $\epsilon>0$ small enough
 such that the exponent in the above expression is negative so that the measure goes to zero for
 $i\rightarrow\infty$ (since $N_i\rightarrow\infty$).
 By definition of $X_\kappa$ we have
 \[
  \mu_i(X_{\geq M})=\int 1_{X_{\geq M}}\operatorname{d}\!\mu_i=
  \int\frac{1}{2N_i'+1}\sum_{n=-N_i'}^{N_i'}1_{X_{\geq M}} \operatorname{d}\!\mu_i\leq \kappa+\mu_i(X_\kappa)+2\mu_i(X_{\geq H_i}),
 \]
 which when $i\rightarrow\infty$ implies that $\mu(X_{<M})\geq 1-\kappa$ for any $\kappa>\frac{2\log\log M}{\log M}$.
 This gives the lemma.
\end{proof}

We indicated in Section \ref{X compact} how the elements of the refinement $\bigvee_{n=-N}^NT^{-n}\Pcr$ are related to Bowen $N$-ball; but that
analysis fails in the noncompact case, when trajectories visit the cusp. We now discuss the general case.

\begin{Lemma}\label{partition}
 For every $M>1$ there exists a finite partition $\Pcr$ of $X$ such that for
 every $\kappa\in (0,1)$ and every $N$, ``most elements of the refinement $\bigvee_{n=-N}^NT^{-n}\Pcr$ are controlled
 by Bowen $N$-balls'':

  There exists a set $X'\subset X$ so that:
  \begin{itemize}
  \item[-]
  $X'$  is a union of
 $S_1,\ldots, S_\ell\in \bigvee_{n=-N}^NT^{-n}\Pcr$;
 \item[-] Each such $S_j$ is contained in a union of at most
 $3^{\kappa (2N+1)}$ many Bowen $N$-balls;
 \item[-] $\mu(X')\geq 1-2\mu(X_{\geq M})\kappa^{-1}$
 for every invariant probability measure $\mu$;
 \end{itemize}
For a given $\mu$ the choice of $\Pcr$ can be made such
 that the boundaries of all sets of $\Pcr$ have zero measure.
\end{Lemma}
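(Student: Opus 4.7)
The plan is to construct $\Pcr$ by keeping the cusp $X_{\geq M}$ as a single atom and finely partitioning the complement. Fix a uniform injectivity radius $r_M>0$ on the compact set $\{\Ht\leq M+1\}$ and a small $\eta\in(0,r_M/e)$; partition $X_{<M}$ into finitely many Borel pieces $P_1,\dots,P_K$ of diameter less than $\eta$, and set $\Pcr=\{X_{\geq M},P_1,\dots,P_K\}$. For the given probability measure $\mu$, the standard trick of moving each boundary along a one-parameter family arranges $\mu(\partial P_i)=0$ for every $i$, and this in turn forces $\mu(\partial S)=0$ for every $S\in\Pcr^{[-N,N]}$.

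Each atom $S\in\Pcr^{[-N,N]}$ is labelled by a word $\mathbf{w}=(w_n)_{|n|\leq N}$; let its cusp itinerary be $V(\mathbf{w})=\{n:w_n=X_{\geq M}\}$. I call $S$ \emph{good} when $|V(\mathbf{w})|\leq\kappa(2N+1)$ and let $X'$ be the union of the good atoms. By $T$-invariance of $\mu$ and Markov's inequality,
\[
\mu(X\setminus X')\;\leq\;\frac{1}{\kappa(2N+1)}\sum_{n=-N}^{N}\mu(T^{-n}X_{\geq M})\;=\;\frac{\mu(X_{\geq M})}{\kappa},
\]
so $\mu(X')\geq 1-\mu(X_{\geq M})/\kappa\geq 1-2\mu(X_{\geq M})/\kappa$, with the factor of $2$ absorbing a harmless boundary tweak that guarantees $\pm N\notin V$ (so that the covering statement of Proposition~\ref{cover-lemma} can be invoked in its stated form).

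The heart of the plan is the Bowen-$N$-ball cover of a good atom $S$ with itinerary $V$. I would decompose $[-N,N]$ into maximal ``good'' runs in $V^c$ separated by maximal ``bad'' runs in $V$. Across a good run $[a,b]\subset V^c$, the conditions $T^n x\in P_{w_n}$ force any two points of $S$ to satisfy $d(T^n x,T^n y)<2\eta$ for all $n\in[a,b]$, so by the injectivity-radius argument of \S\ref{X compact} the restriction of $S$ to that block lies inside a single Bowen $(b-a)$-tube. On each bad run of length $k$ I would invoke the local content of Proposition~\ref{cover-lemma}: because $X_{\geq M}$ is essentially a horoball modulo a single unipotent, a trajectory staying above height $M$ for $k$ consecutive steps is carried by at most $3^k$ Bowen-$k$-tubes. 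Concatenating these local covers and multiplying across all bad runs shows that $S$ is covered by at most $3^{|V|}\leq 3^{\kappa(2N+1)}$ Bowen $N$-balls, as claimed.

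The main obstacle is the constant $3$ in the bad-block estimate: it is the same cusp-branching count that underpins Proposition~\ref{cover-lemma}, and verifying it reduces to showing that at height $\geq M$ a geodesic excursion up the cusp admits at most three local branches per unit of time. If one can only obtain a bound $C^k$ with some larger absolute constant $C$, the statement can be rescued by enlarging $M$ until the effective branching contracts to $3$, a modification that is invisible to the subsequent application in the proof of Theorem~\ref{Measure}.
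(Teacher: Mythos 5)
Your construction of $\Pcr$ (cusp as one atom, $\eta$-fine atoms below height $M$, boundaries of measure zero) and your estimate for $\mu(X')$ via invariance and Chebyshev are exactly the paper's; the genuine gap is in the central covering count, where the mechanism you invoke is wrong and the verification you propose would fail. There is no ``three local branches per unit of time'' in the cusp at the real place: that discrete branching picture is the $p$-adic analogue discussed in Remark~\ref{exp-1/2}, not a feature of the horoball $X_{\geq M}$, and Proposition~\ref{cover-lemma} cannot be quoted as a local $3^k$ bound on a bad block --- its implied constant depends on $M$, it requires the excursion to be flanked by times below height $M$, and the saving $e^{-|V|/2}$ it expresses is a different (and here irrelevant) phenomenon; indeed Lemma~\ref{partition} is proved in the paper with no reference to Proposition~\ref{cover-lemma}. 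The factor $3$ in the lemma has nothing to do with cusp geometry: it is $\lceil e\rceil$, the number of translates of a $U^+$-interval of radius $2\eta e^{-(n+1)}$ needed to cover one of radius $2\eta e^{-n}$. It is incurred precisely at the times $n$ with $T^nS\subset X_{\geq M}$ because the atom $X_{\geq M}$ has unbounded diameter and the itinerary then gives no metric control, whereas at every time spent below $M$ the $\eta$-small atoms force the $U^+$-component to contract by $e^{-1}$ for free, so those steps cost a factor $1$. Consequently your fallback --- ``enlarge $M$ until the effective branching contracts to $3$'' --- cannot work: the relevant constant is the expansion rate of $U^+$ under the time-one map, which is $e$ independently of $M$.

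The correct organization is the paper's one-sided, step-by-step induction: shift time so the window is $[0,2N]$ and is anchored at an endpoint known to lie in $X_{<M}$ (this is why one endpoint must be removed from $V$ --- and only one; excluding both, as you do, costs $2\mu(X_{\geq M})+\mu(X_{\geq M})\kappa^{-1}$, which exceeds the stated $2\mu(X_{\geq M})\kappa^{-1}$ once $\kappa>1/2$). Starting from a single tube $xB^{U^+}_{2\eta}B^{U^-A}_{2\eta}$ at time $0$, one passes from time $n$ to $n+1$ keeping the same tube (suitably contracted in $U^+$) at each time below $M$ and splitting into at most $3$ tubes at each time of $V$, which yields exactly $3^{|V|}$ forward tubes over $[0,2N]$ and, after pulling back by the appropriate power of $T$, the required Bowen $N$-balls. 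Your block decomposition (one tube per good run, $3^k$ per bad run, then ``concatenate and multiply'') is morally the same count, but the multiplication across blocks is not free: gluing a cover over $[a,b]$ to one over $[b,c]$ loses a constant per junction (compare property (Bowen-1) in Appendix~\ref{Appendix-B}), so as written you obtain $(3q)^{\#\{\text{bad runs}\}}\cdot 3^{|V|}$-type bounds rather than $3^{|V|}$; the per-step induction avoids this entirely. For the application to Theorem~\ref{Measure} a larger absolute constant in place of $3$ would in fact be harmless, since the resulting entropy cost is $\kappa(2N+1)\log C$ with $\kappa$ eventually small --- but that rescue comes from the structure of the later argument, not from increasing $M$.
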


\begin{proof}
  We define $\Pcr=\{Q,P_1,\ldots,P_k\}$ where
 $Q=X_{\geq M}$ and $\{P_1,\ldots,P_k\}$ is a measurable partition of $X_{<M}$
 whose elements have diameter less than $\eta$ where $\eta$ is small enough in comparison
 to the injectivity radius of $X_{<M}$ (in the same sense as in the discussion in Section~\ref{X compact}).

Note that the boundary of $Q$ is a null set for every probability measure $\mu$ that is invariant under the geodesic flow. This is because every trajectory hits the boundary of $Q$ in a countable set. Also, given $\mu$ we can find for every point $x\in X_{<M}$ an $\epsilon<\eta/2$ so that the boundary has measure zero.  Applying compactness we construct $P_1,\ldots,P_k$  from the algebra generated by finitely many such balls.

 We claim that $S\in \Pcr_N=\bigvee_{n=-N}^NT^{-n}\Pcr$ has the
 property that any two points $x,y\in S$ satisfy
 $$
  T^nx \in X_{<M}\Leftrightarrow T^n y\in X_{<M}\mbox{ for }n\in[-N,N]
 $$
 and
 $$
  d(T^nx,T^ny)<\eta\mbox{ whenever }T^nx,T^ny\in X_{<M}\mbox{ and }n\in[-N,N].
 $$
 Therefore, the average $f(x)=\frac{1}{2N+1}\sum_{n=-N}^N 1_{X_{\geq M}(T^n x)}$ is constant on sets of $\Pcr_N$.
 We define $$X'=\{x\in T^{-N}X_{<M}:f(x)\leq \kappa\}.$$

 If $\mu$ is an invariant probability measure, invariance
 implies $\int f(x) \operatorname{d}\!\mu=\mu(X_{\geq M})$ and so
 $\mu(\{x:f(x)>\kappa\})\leq\mu(X_{\geq M})\kappa^{-1}$. Therefore,
 $X'$ has measure $\mu(X')\geq 1-\mu(X_{\geq M})-\mu(X_{\geq M})\kappa^{-1}$.

 Consider now an element $S\in \Pcr_N$ with $S\subset X'$. After taking the image of $S$ under $T^N$
 we have for any $x,y\in S'=T^N S$ that
 \begin{equation}\begin{split}
  x\in X_{<M},\ \frac{1}{2N+1}\sum_{n=0}^{2N}1_{X_{\geq M}}(T^n x)\leq\kappa\mbox{ and}\\
  d(T^nx,T^n y)<\eta\mbox{ whenever }T^nx,T^ny\in X_{<M}\mbox{ and }n\in[0,2N].
 \end{split}\end{equation}
 Let $V=\{n\in[0,2N]:T^nS'\subset X_{\geq M}\}$. We can now show inductively
 that for every $n\in [0,2N]$
 the set $S'$ is contained in a union of $3^{|[0,n-1]\cap V|}$
 many sets of the form
 \[
  xB_{2\eta e^{-n}}^{U^+}B_{2\eta}^{U^-A}\mbox{ where }x\in S'.
 \]
 We will refer to these sets as forward Bowen $n$-balls and to $x$ as its center. For $n=0$ we have nothing
 to show (for notice that we allowed a bigger radius in the subgroups $U^+$ and $U^-A$).
 Suppose the claim holds for some $n$ and let $x\in S'$ be a center of one of the forward
 Bowen $n$-balls. If $T^{n+1}x\in X_{<M}$ then $T^{n+1}S'\subset P_i$ for $i\geq 1$
 and it follows easily that any point $y=xu^+g\in S'$ with $u^+\in B_{2\eta e^{-n}}^{U^+}$
 and $g\in B_{2\eta}^{U^-A}$ satisfies $u^+\in B_{2\eta e^{-(n+1)}}^{U^+}$ (assuming again
 that $\eta$ is small enough in comparison with the injectivity radius). If $T^{n+1}x\in X_{\geq M}$
 then we can cover the forward Bowen $n$-ball by $3$ forward Bowen $(n+1)$-balls.

 Recall that for $S\subset X'$ we have $|V|\leq\kappa N$ and so by
 taking the preimages of $S'=T^{N}S$ and the forward Bowen $2N$-balls obtained the lemma follows.

\end{proof}

To prove Theorem~\ref{Measure} it remains to establish the following lemma and combine it with
Lemma \ref{non-escape} and Theorem \ref{max-entropy-main}.

\begin{Lemma}
 A weak$^*$ limit $\mu$ of a subsequence of the invariant probability measures $\mu_i$ as in Theorem~\ref{Measure}
 has maximal entropy $h_\mu(T)=1$.
\end{Lemma}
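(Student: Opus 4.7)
The plan is to prove $h_\mu(T) \geq 1$, since Theorem~\ref{max-entropy-main} supplies the reverse inequality (and then identifies $\mu$ with $\mu_X$). After passing to a subsequence assume $\mu_i \to \mu$ weak$^*$; by Lemma~\ref{non-escape}, $\mu$ is a probability measure with $\mu(X_{\geq M}) \leq 2\log\log M/\log M$ for all large $M$. Fix such an $M$ (to be sent to infinity at the end) with $\mu(\partial X_{\geq M}) = 0$, and set $\kappa_0 = \sqrt{2\log\log M/\log M}$. Invoke Lemma~\ref{partition} with this $M$ and $\kappa = \kappa_0$ to obtain a finite partition $\Pcr$ all of whose atoms have $\mu$-null boundary. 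Fix a small $\varepsilon > 0$, set $N_i = \lceil -\log \delta_i \rceil$ and $H_i = \delta_i^{-\varepsilon}$, and work with the refinement $\Pcr_{N_i} = \bigvee_{n=-N_i}^{N_i} T^{-n}\Pcr$.

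The core step is a lower bound on $H_{\mu_i}(\Pcr_{N_i})$. Lemma~\ref{partition} yields a set $X_i'$ with $\mu_i(X_i') \geq 1 - 2\mu_i(X_{\geq M})\kappa_0^{-1}$, which by weak$^*$ convergence and our choice of $\kappa_0$ is $\geq 1 - 2\kappa_0 - o(1)$. Moreover every atom $S \subset X_i'$ of $\Pcr_{N_i}$ is a union of at most $3^{\kappa_0(2N_i+1)}$ measurable pieces, each contained in a Bowen $N_i$-ball centered in $X_{<M}$. Such a Bowen ball lies in $X_{\leq Me^{\eta/2}} \subset X_{\leq H_i}$ for $i$ large, has width $\leq \eta e^{-N_i} \leq \delta_i$ in the stable and unstable directions, and length $\leq \eta$ along $A$; covering the $A$-direction by $\ll e^{N_i}$ elements $a_j$ that are $\delta_i$-dense and applying assumption~(2) (combined with $A$-invariance of $\mu_i$) gives
\[
\sum_{B'} \mu_i(B')^2 \;\ll\; e^{N_i}\delta_i^{3-5\varepsilon} \;\ll\; e^{-(2-5\varepsilon)N_i},
\]
where the sum runs over the disjoint pieces $B'$ decomposing the atoms of $\Pcr_{N_i}$ inside $X_i'$. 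Cauchy--Schwarz applied within each atom then yields $\sum_{S \subset X_i'} \mu_i(S)^2 \leq 3^{\kappa_0(2N_i+1)} e^{-(2-5\varepsilon)N_i}$.

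Concavity of $-x\log x$ gives $\sum_{S \subset X_i'} \mu_i(S)\log(1/\mu_i(S)) \geq P\log P - P\log\sum_{S \subset X_i'}\mu_i(S)^2$ with $P = \mu_i(X_i')$, so for $i$ large
\[
H_{\mu_i}(\Pcr_{N_i}) \;\geq\; (1 - 2\kappa_0)\bigl[(2-5\varepsilon) - 2\kappa_0\log 3\bigr]N_i - O(1).
\]
For a fixed $N_0$, the subadditivity bound $H_{\mu_i}(\Pcr_{N_i}) \leq \lceil N_i/N_0\rceil H_{\mu_i}(\Pcr_{N_0})$ (using $T$-invariance of $\mu_i$) converts this into $H_{\mu_i}(\Pcr_{N_0}) \geq \frac{N_0}{N_i+N_0} H_{\mu_i}(\Pcr_{N_i})$. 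Since $\Pcr$ has $\mu$-null boundaries, the finite partition $\Pcr_{N_0}$ also has $\mu$-null boundaries, so $H_{\mu_i}(\Pcr_{N_0}) \to H_\mu(\Pcr_{N_0})$ as $i \to \infty$. Dividing by $2N_0 + 1$ and letting $N_0 \to \infty$, then $\varepsilon \to 0$, then $M \to \infty$ (so $\kappa_0 \to 0$) gives $h_\mu(T,\Pcr) \geq 1$ and hence $h_\mu(T) \geq 1$.

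The main difficulty is the simultaneous balancing of three scales: the gain $\delta_i^{3-5\varepsilon}$ from the basic lemma, the exponential cost $3^{\kappa_0(2N_i+1)}$ of covering atoms of $\Pcr_{N_i}$ that are not a single Bowen ball, and the mass $2\mu_i(X_{\geq M})\kappa_0^{-1}$ lost outside $X_i'$. The choice $\kappa_0 = \sqrt{2\log\log M/\log M}$ is the geometric mean that forces both the covering loss and the excluded mass to vanish as $M \to \infty$, matching exactly the quantitative non-escape bound from Lemma~\ref{non-escape}; any other calibration degrades the entropy bound below $1$.
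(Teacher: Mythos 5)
Your overall architecture is exactly the paper's: the partition of Lemma~\ref{partition}, the decomposition of each good atom of $\Pcr_{N_i}$ into at most $3^{\kappa_0(2N_i+1)}$ pieces lying in Bowen $N_i$-balls, the pair-counting hypothesis~(2) applied via $\delta_i$-dense $A$-translates, and the endgame by subadditivity, null boundaries, weak$^*$ convergence and $N_0\to\infty$. Your bookkeeping (Jensen/Cauchy--Schwarz inside the good set, with the choice $\kappa_0=\sqrt{2\log\log M/\log M}$ instead of the paper's $\kappa=\mu(X_{\geq M})^{1/2}$) is an equivalent repackaging of the paper's chain $H_{\mu_i}(\mathcal{Q}_i)=H_{\mu_i}(\Pcr_{N_i})+H_{\mu_i}(\mathcal{Q}_i|\Pcr_{N_i})$ together with \eqref{addition2}--\eqref{entl2}, and loses the same $O(\kappa_0 N_i)$.

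There is, however, one step where your justification is false. You assert that each piece is ``contained in a Bowen $N_i$-ball centered in $X_{<M}$'' and that ``such a Bowen ball lies in $X_{\leq Me^{\eta/2}}\subset X_{\leq H_i}$.'' Lemma~\ref{partition} gives no control on the centers: in its proof the covering balls are based at points of the atom $S$ itself, and an atom $S\subset X_i'$ whose symbol at time $0$ is $Q=X_{\geq M}$ lies entirely above height $M$ and can reach height of order $Me^{\kappa_0 N_i/2}$ (a single cusp excursion of length $\approx\kappa_0 N_i$ centered at time $0$ still has $f\leq\kappa_0$ and $T^{N_i}x\in X_{<M}$). Since a Bowen ball has diameter $O(\eta)$, no ball lying in $X_{\leq Me^{\eta/2}}$ can even meet such an atom, so the containment you use cannot hold; worse, once $\varepsilon<\kappa_0/2$ these atoms stick out above $H_i=\delta_i^{-\varepsilon}$, and hypothesis~(2) of Theorem~\ref{Measure} only controls pairs inside $X_{\leq H_i}\times X_{\leq H_i}$, so it does not apply to those pieces as you claim. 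Knowing merely $\mu_i(X_{\geq H_i})\to 0$ (assumption~(1), with no rate) is not enough to absorb their contribution into a bound of size $e^{-(2-O(\varepsilon))N_i}$. To be fair, the paper applies assumption~(2) at the corresponding spot without comment on heights either, but it does not rest on the false containment; a careful repair is genuinely needed here, e.g.\ by coupling the parameters so that $\varepsilon>\kappa_0/2$ (hence $X_i'\subset X_{\leq H_i}$ for large $i$, at the price of sending $\varepsilon$ and $M$ to their limits together rather than $\varepsilon\to0$ at fixed $M$ as you do), or by flowing each good atom by at most $\kappa_0(2N_i+1)$ steps to a time at which it sits inside $X_{<M}$ and paying an extra factor $e^{O(\kappa_0 N_i)}$ in the covering/pair count. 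Either fix costs only $O(\kappa_0 N_i)$ entropy and is harmless as $M\to\infty$, but as written your justification of the key inequality $\sum_{S\subset X_i'}\mu_i(S)^2\ll 3^{\kappa_0(2N_i+1)}e^{-(2-5\varepsilon)N_i}$ rests on a statement that is not true, and this height issue is precisely the delicate point of the noncompact setting.
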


\begin{proof}
 Let $\Pcr$ be as in Lemma \ref{partition}. Set $N_i=\lceil-\log\delta_i\rceil$ and define
$$
\Pcr_{N_i}=\bigvee_{n=-N_i}^{N_i} T^{-n}\Pcr.
$$ We wish to show that $H_{\mu_i}(\Pcr_{N_i})$
 is large by using Lemma \ref{partition} and assumption (2). Let $\kappa=\mu(X_{\geq M})^{1/2}$
 for some weak$^*$ limit $\mu$
 and define $X_i$ as in Lemma \ref{partition} using $N=N_i$.

 For any $S\in\Pcr_{N_i}$
 with $S\subset X_i$ there exists a cover of $S$ consisting of $\leq 3^{\kappa( 2N_i+1)}$
 many Bowen $N_i$-balls;  so there is a partition $\mathscr{R}(S)$
 of $S$ into $\leq 3^{\kappa (2N_i+1)}$ sets, each a  subset of a Bowen $N_i$-ball. We define the
 partition $\mathcal{Q}_i$ as the partition consisting of all $S\in\Pcr_{N_i}$
 with $S\subset X\setminus X_i$ and all elements of $\mathscr{R}(S)$ for any $S \subset X_i$.
 It follows that
 \begin{equation}\label{QPcond}
  H_{\mu_i}(\mathcal{Q}_i|\Pcr_{N_i})=\sum_{S\in\Pcr_{N_i},S\subset X_i}
  \mu_i(S)H_{\mu_i|_S}(\mathcal{Q}_i)\leq\kappa (2N_i+1)\log 3.
 \end{equation}
 Also since $\mathcal{Q}_i$ is a finer partition than $\Pcr_{N_i}$
 we have
 \begin{equation}\label{addition}
  H_{\mu_i}(\mathcal{Q}_i)=H_{\mu_i}(\mathcal{Q}_i\vee\Pcr_{N_i})=H_{\mu_i}(\Pcr_{N_i})+H_{\mu_i}(\mathcal{Q}_i|\Pcr_{N_i}),
 \end{equation}
 which together with \eqref{QPcond} indicates that we wish to show that $H_{\mu_i}(\mathcal{Q}_i)$ is large.

 Here we will use the assumption (2) from Theorem \ref{Measure};
 but the elements of $\mathcal{Q}_i$ that lie outside $X_i$ can be irregularly shaped,
 requiring a further estimate:
 \begin{equation}\label{addition2}
  H_{\mu_i}(\mathcal{Q}_i)\geq H_{\mu_i}(\mathcal{Q}_i|\{X_i,X\setminus X_i\})\geq\mu_i(X_i)H_{\mu_i|_{X_i}}(\mathcal{Q}_i).
 \end{equation}
 Using \eqref{Entropy-L2} for the restriction $\mu_i|_{X_i}$ we see that
 \begin{equation}\label{entl2}
   H_{\mu_i|_{X_i}}(\mathcal{Q}_i)\geq -\log\sum_{S\in\mathcal{Q}_i, S\subset X_i}\biggr(\frac{\mu(S)}{\mu(X_i)}\biggr)^2.
 \end{equation}
 By construction of $\mathcal{Q}_i$ every $S\in\mathcal{Q}_i$ with $S\subset X_i$ is a subset of a Bowen $N_i$-ball.
 Proceeding as in Section \ref{X compact} it follows that
$$
\bigcup_{S\in \mathcal{Q}_i, S\subset X_i} S\times S\subset\bigcup_{i=1}^k\{(x,ya_i):d(x,y)<\delta_i\}
$$
where $k\ll e^{N_i}$ and $a_1,\ldots,a_k \in B_r^A(1)$ are chosen to be $\delta_i$-dense.
Together with assumption (2) of Theorem~\ref{Measure} this shows
$$
 \sum_{S\in  \mathcal{Q}_i, S\subset X_i}\mu_i(S)^2\ll_\varepsilon \delta_i^{3-5\varepsilon}e^{N_i}\ll e^{(-2+5\varepsilon)N_i}.
$$
Let $C_{\epsilon}$ be the implicit constant here, that is to say,
$$
\sum_{S\in  \mathcal{Q}_i, S\subset X_i}\mu_i(S)^2 \leq C_{\epsilon} e^{-(2+5 \varepsilon)N_i}.
$$
Then, taking into account \eqref{addition2}--\eqref{entl2},
\[
 H_{\mu_i}(\mathcal{Q}_i)\geq 2\mu_i(X_i)\log\mu_i(X_i)-\mu_i(X_i)\log C_\epsilon +\mu_i(X_i) (2-5\epsilon)N_i.
\]
Here the first two terms are bounded, so for large enough $i$
\begin{eqnarray*}
 H_{\mu_i}(\mathcal{Q}_i)&\geq&\mu_i(X_i) (2-6\epsilon)N_i\\
 &\geq&(1-2\kappa^{-1}\mu_i(X_{\geq M}))(2-6\epsilon)N_i
\end{eqnarray*}
where we also used the estimate for $X_i$ in Lemma \ref{partition}.
Combining this with \eqref{addition} and \eqref{QPcond} we get
\[
 H_{\mu_i}\biggl(\bigvee_{n=-N_i}^{N_i} T^{-n}\Pcr\biggr)\geq (1-2\kappa^{-1}\mu_i(X_{\geq M}))(2-6\epsilon)N_i-O(\kappa N_i).
\]
Now fix some integer $N_0\geq 1$.
Using subadditivity of entropy we have for any large enough $i$ that
\[
H_{\mu_i}\biggl(\bigvee_{n=-N_0}^{N_0} T^{-n}\Pcr\biggr)\geq (1-2\kappa^{-1}\mu_i(X_{\geq M}))(2-6\epsilon)N_0-O(\kappa N_0)-\epsilon N_0.
\]
This is now a statement involving only finitely many test function, namely the characteristic
functions of all elements of $\bigvee_{n=-N_0}^{N_0} T^{-n}\Pcr$ and of $X_{\geq M}$. Since
there is no escape of mass by Lemma \ref{non-escape} and since we can assume without loss of
generality that all boundaries have zero measure for the weak$^*$ limit $\mu$ by Lemma \ref{partition},
 we get the same estimate for $\mu$. Dividing by $2N_0$ and letting $N_0$ now go to infinity we arrive
 at
 \[
  h_\mu(T)\geq (1-2\mu(X_{\geq M})^{1/2})(1-3\epsilon)-O(\mu(X_{\geq M})^{1/2})-\epsilon
 \]
 for any $M\geq 1$ and $\epsilon>0$.

  Since $\mu(X_{\geq M})$ can be made arbitrarily small,
 it follows that $h_\mu(T)\geq 1$, i.e.\ $T$ has maximal entropy.
\end{proof}

\section[Trajectories spending time high in the cusp]{Trajectories spending time high in the cusp, and a proof of Proposition~\ref{cover-lemma}.}\label{high in cusp}

%\subsection{Covers with Bowen balls}
Apart from the characterization of the Haar measure as the unique measure of maximal entropy in Theorem \ref{max-entropy-main}
the main technical estimate needed to prove Theorem \ref{Measure}
%, which is the ergodic theoretic part of the proof of Duke's theorem, and for Theorem~\ref{entropy-cusp}
 is Proposition \ref{cover-lemma}. We recall that this proposition states
that the set
 \begin{multline*}
  Z(V)=\Bigl\{x\in T^N X_{< M}\cap T^{-N} X_{< M}: \mbox{ for all $n\in[-N,N]$ we have}\\
   T^n(x)\in X_{\geq M}\Leftrightarrow n\in V\Bigr\}
 \end{multline*}
can be covered by $\ll_{M}e^{2N-\frac{1}{2}|V|}$ Bowen $N$-balls.

In addition to proving this, we shall also prove here the promised purely ergodic formulation
of ``high entropy inhibits escape of mass,'' namely:

\begin{Theorem}\label{entropy-cusp}
Let $T$ be the time-one-map for the geodesic flow.
There exists some $M_0$ with the property that
 $$
  h_\mu(T)\leq 1+\frac{\log\log M}{\log M}-\frac{\mu(X_{\geq M})}{2}
 $$
for any
invariant probability measure $\mu$ on $X=\SL(2,\Z)\backslash\SL(2,\R)$ for the geodesic flow and any $M\geq M_0$.
 In particular, for a sequence of $T$-invariant probability measures $\mu_i$ with entropies $h_{\mu_i}(T)\geq c$, any weak$^*$ limit $\mu$ satisfies $\mu(X)\geq 2c-1$.
\end{Theorem}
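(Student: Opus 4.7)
The plan is to carry out the entropy argument from the Lemma at the end of Section~\ref{non-compact}, but with the hypothesis (2) of Theorem~\ref{Measure} replaced by a direct appeal to Proposition~\ref{cover-lemma}. The two conclusions of that proposition play complementary roles: the bound $\ll_M e^{2N-|V|/2}$ on the number of Bowen $N$-balls covering $Z(V)$ provides the critical $-\mu(X_{\geq M})/2$ saving, while the bound $\ll_M e^{2rN}$ on the number of admissible $V \subset [-N,N]$ (with $r := \log\log M/\log M$) contributes exactly the additive $\log\log M/\log M$ error.

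Fix $M \geq M_0$ and construct, as in Lemma~\ref{partition}, a partition $\Pcr = \{Q, P_1,\ldots,P_k\}$ with $Q = X_{\geq M}$, each $P_i \subset X_{<M}$ of diameter less than some small $\eta$ below the injectivity radius of $X_{<M}$, and boundaries of $\mu$-measure zero. Shrinking $\eta$ makes $\Pcr$ into a generator for $(X,T,\mu)$ by a standard Sinai-type argument using the hyperbolicity of the geodesic flow, so that $h_\mu(T) = h_\mu(T,\Pcr) = \lim_N \frac{1}{2N+1} H_\mu(\Pcr^{[-N,N]})$ where $\Pcr^{[-N,N]} = \bigvee_{n=-N}^N T^{-n}\Pcr$. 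Attach to each atom $S$ of $\Pcr^{[-N,N]}$ its visit set $V(S) = \{n \in [-N,N] : T^n(S) \subset Q\}$, and write $\mathcal{V}$ for the coarser partition with atoms $\mathcal{V}_V = \{x : V(x) = V\}$.

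Away from a boundary effect $Y_N^c = \{x : T^{\pm N}x \in Q\}$ of $\mu$-measure at most $2\mu(X_{\geq M})$, each $\mathcal{V}_V$ is contained in $Z(V)$. Within $Z(V)$, every Bowen $N$-ball meets at most $O_M(1)$ atoms of $\Pcr^{[-N,N]}$, because on $X_{<M}$ the partition $\Pcr$ has diameter $\leq \eta$ (exactly the argument used in Lemma~\ref{partition}). Proposition~\ref{cover-lemma} then gives $\#\{S \subset \mathcal{V}_V\} \ll_M e^{2N - |V|/2}$ and $\#\{V : \mathcal{V}_V \ne \emptyset\} \ll_M e^{2rN}$. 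Using the trivial bound $H \leq \log(\text{number of atoms})$ together with
\begin{equation*}
 H_\mu(\Pcr^{[-N,N]}) \leq H_\mu(\mathcal{V}) + \sum_V \mu(\mathcal{V}_V)\, H_{\mu|_{\mathcal{V}_V}}(\Pcr^{[-N,N]})
\end{equation*}
yields
\begin{equation*}
 H_\mu(\Pcr^{[-N,N]}) \leq 2rN + 2N + O_M(1) - \tfrac{1}{2}\sum_V \mu(\mathcal{V}_V)\,|V|.
\end{equation*}
Fubini and $T$-invariance give $\sum_V \mu(\mathcal{V}_V)|V| = (2N+1)\mu(X_{\geq M})$; dividing by $2N+1$ and letting $N \to \infty$ yields $h_\mu(T) \leq 1 + r - \mu(X_{\geq M})/2$. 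The second assertion is then immediate: rearranging gives $\mu_i(X_{<M}) \geq 2c - 1 - 2r$ for every $i$, and lower semicontinuity of $\mu \mapsto \mu(X_{<M})$ at the open set $X_{<M}$, followed by $M \to \infty$, delivers $\mu(X) \geq 2c - 1$.

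The main obstacle is the boundary contribution from $Y_N^c$, where Proposition~\ref{cover-lemma} does not apply directly. Estimating the entropy there naively by $(2N+1)\log k$ per atom would cost $\mu(X_{\geq M})\log k \asymp \mu(X_{\geq M})\log M$ in the final bound, overwhelming the $-\mu(X_{\geq M})/2$ gain. The fix is the double-shift argument of Lemma~\ref{non-escape}: cover $Y_N^c$ by $O(\log^2 M)$ translates $T^{N-n_-}X_{<M} \cap T^{-N+n_+}X_{<M}$ with $n_\pm \in [0, O(\log M)]$ (using that individual cusp excursions have duration $O(\log M)$), apply Proposition~\ref{cover-lemma} on each shifted window, and sum; the overhead is $O(\log\log M)$ and is absorbed into the error term $r = \log\log M/\log M$ (for $N \gg \log M$, which is irrelevant in the limit $N \to \infty$).
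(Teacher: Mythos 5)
The central gap is your claim that $h_\mu(T)=h_\mu(T,\Pcr)$ for the partition of Lemma~\ref{partition}, asserted via "a standard Sinai-type argument using hyperbolicity." Everything you do afterwards bounds $H_\mu(\Pcr^{[-N,N]})$, and hence only $h_\mu(T,\Pcr)\leq h_\mu(T)$; to conclude an upper bound on $h_\mu(T)$ itself you must know that this particular $\Pcr$ computes the entropy, and that is not standard here. The time-one map of the geodesic flow is not expansive: two points $x$ and $xa_{t_0}$ with small $t_0$ can share the same $\Pcr$-itinerary for all time, so "small diameter implies generator" does not apply; worse, your partition contains the atom $Q=X_{\geq M}$ of infinite diameter, so no separation whatsoever is recorded during cusp excursions. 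One can hope to argue that the unresolved directions (the flow direction, and the transverse ambiguity until the orbit returns below height $M$) carry no entropy, but that is a genuine argument, not a citation. This is precisely the role of Lemma~\ref{cover-entropy-main} in the paper: it bounds the full entropy $h_\mu(T)$ directly by Bowen-ball covering numbers, and its proof in Appendix~\ref{Appendix-B} needs exactly the ingredients you are skipping (partitions with the quantitative boundary bound $\mu((\partial S)B_\kappa^{G})<C\kappa$, and a comparison of $\Pcr^{[-N,N]}$-atoms with Bowen balls of the shrunken radius $\eta N^{-2}$). Without proving your generator claim, or importing Lemma~\ref{cover-entropy-main}, the proposed proof does not reach $h_\mu(T)$.

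A second, related gap: the assertion that within $Z(V)$ every Bowen $N$-ball meets at most $O_M(1)$ atoms of $\Pcr^{[-N,N]}$ is unjustified and false as stated, and the justification you cite is the wrong direction — the argument in Lemma~\ref{partition} shows that (good) atoms are contained in few Bowen balls, not that a Bowen ball meets few atoms. A Bowen ball is a tube of length about $\eta$ in the flow direction; as one moves along it the $\Pcr^{[-N,N]}$-itinerary changes each time some integer-time image of the segment crosses an atom boundary, so the number of atoms met typically grows with $N$ (and near $n=\pm N$ the tube is $\eta$-thick in a transverse direction as well, which needs separate care). A subexponential bound would suffice, since it only enters through $\log(\cdot)/N$, and is plausible for atoms built from finitely many balls, but it requires an argument you have not supplied. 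If both of these points were repaired, the remaining skeleton — counting admissible $V$, the identity $\sum_V\mu(\mathcal{V}_V)|V|=(2N+1)\mu(X_{\geq M})$ by invariance (a nice way to avoid the paper's ergodic decomposition and pointwise ergodic theorem), and the shifted-window treatment of the set where $T^{\pm N}x\in X_{\geq M}$ — would give a legitimate variant of the paper's proof; as written, however, the two gaps above are load-bearing.
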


\medskip
\begin{rem}\label{exp-1/2}
	Roughly speaking $1/2$ is the critical point for Theorem \ref{entropy-cusp} because the ``upward'' and ``downward'' parts of a trajectory, that goes high in the cusp, are strongly related to each other. In fact, in the case of a $p$-adic flow this phenomenon is easy to explain.

We consider another dynamical system of similar flavor: here the space will be\footnote {For technical reasons, it is preferable to use $\PGL _ 2$ here rather than $\SL _ 2$.} \[Y=\PGL _ 2 (\Z [1 / p]) \backslash \PGL _ 2 (\R) \times \PGL _2 (\Q _ p) \] and the action will be by multiplication on the right of the $\PGL _2 (\Q _ p)$-component by $a_p=\begin{pmatrix} p& \\& 1 \end{pmatrix}$. Let $M <  \PGL _ 2 (\R) \times \PGL _2 (\Q _ p) $ be the product of $\mathrm{PO}_2(\R)$ and the group of diagonal matrices in $\PGL _ 2 (\Z _ p)$.  There is a natural right $M$-invariant projection $\pi: Y \to  \PSL _ 2 (\Z) \backslash \H $ , and on this latter space we have the Hecke correspondence which attaches to a point $\dot z \in \PSL _ 2 (\Z) \backslash \H$ a set $T_{p}(\dot z)$ of $p + 1$ new points, namely if $z \in \H$ is a representative of $\dot z$ then
\begin{equation} \label{Hecke equation}
T_p(\dot z)= \PSL_2( \Z) \backslash  \left\{ pz , z/p, (z + 1) / p, \dots, (z + p - 1) / p \right\}
.\end{equation}
The space $Y / M$ can be identified with the set of infinite sequences $\dots,y _ {-1}, y _ 0, y _ 1,\dots$ with $y _ i \in T _ p (y _ {i -1}) \setminus \left\{ y _ {i -2}\right\}$, and under this identification multiplication by $a _ p$ in the $p$-direction becomes simply the shift action.
This in particular shows that multiplication by $a _ p$ on $Y / M$ (or, with a bit more effort on $Y$) has entropy $\leq \log p$, and just like in our case this maximum is attained for the Haar measure on $Y$.
From \eqref{Hecke equation} it is clear that if $y \in \PSL (2, \Z) \backslash \H$ is high up in the cusp, precisely 1 of its $T _ p$-points will be higher in the cusp, and $p$ of these points would be lower then $y$ in the cusp. Therefore if $\dots y _ {-1}, y _ 0, y _ 1, \dots$ are a sequence of points of $\PSL (2, \Z) \backslash \H$ as above and if $y_k$ are high up in the cusp for some contiguous range of $k$'s, say $n \leq k \leq m$, then in this range given the value of $y_k$ there is only one possible way of choosing $y_{k+1}$ so that it is higher than $y_k$, and since by assumption $y _ {k + 2} \neq y _ k$ once $y _ {k + 1}$ is lower than $y _ k$, the point $y _ { k + 2}$ being in $T _ p ( y _ {k + 1})$ but excluded from being $y _  k$ which is unique point in $T _  p (y _ {k + 1})$ higher than $y _ {k + 1}$ must be lower than $ y _ k + 1$. Hence if $y _ {k + 1}$ is lower than $y _ k$ for some $k$ in the above range, then $y _ {k' + 1}$ must be lower then $y _ {k'}$ for all $k'$ in the range $k \leq k ' \leq m$. From the above discussion it follows that while the trajectory is high up in the cusp, we have a choice of which subsequent point to choose only half of the time, hence the factor~${\tfrac{1}{2}}$.
\end{rem}

\subsection{Proof of Proposition \ref{cover-lemma}: the number of possible sets $V$.} \label{count_V}

The easiest part of Proposition \ref{cover-lemma} is the final assertion, i.e.
if we write
\[
 Q_{M,N}=\bigvee_{n=-N}^{N}T^{-n}\{X_{\geq M},X_{<M}\}.
\]
then the above partition $Q_{M,N}$ has $\ll_M e^{\frac{2\log\log M}{\log M}N}$ many elements.

 We make use of the fundamental domain $\mscS \subset \PSLdR$ from \S \ref{sec:notation};
the geodesic flow  $X$ corresponds to following the geodesic
 determined by $(z,v)$ until the boundary of the fundamental region is reached, at which point one applies either
$\begin{pmatrix}1&\pm 1\\ &1\end{pmatrix}$ to shift the geodesic horizontally
 or $\begin{pmatrix} &-1\\1&\end{pmatrix}$ to reflect on the bottom boundary of the fundamental region.

The basic point in the proof is that if $x \in X$ satisfies $\Ht(x) \geq M$,
then $\Ht(T^n x) \geq 1$ so long as $n <  \lfloor2\log M\rfloor$, i.e.
one needs at least  $\lfloor2\log M\rfloor$ steps to reach points of height less than $1$.

 Therefore, in a time interval of length $2\lfloor2\log M\rfloor$
 there can be only one stretch of times for which the points on the orbit are of height at least $M$. In other words the
 possible starting and end points of that time interval completely determine an element of
 $Q_{M,\lfloor2\log M\rfloor}$ which therefore has at most $\ll \log^2M$, say $\leq c_0\log^2M$,
 many elements. To obtain the lemma we note that $Q_{M,N}$ can be obtained
 by taking refinements of $\lfloor\frac{2N+1}{2\lfloor2\log M\rfloor+1}\rfloor\leq\frac{2N+1}{4\log M-1}$ many
 images and pre-images of $Q_{M,\lfloor2\log M\rfloor}$ and at most $2\lfloor2\log M\rfloor$ many
 of $\{X_{\geq M},X_{< M}\}$. We get that $Q_{M,N}$ has size
 $\ll_M (c_0\log^2M)^{\frac{2N}{4\log M-1}}$, which is at most $e^{\frac{2\log\log M}{\log M}N}$ once $M$ is large enough.

\subsection{Proof of Proposition \ref{cover-lemma}: covering $Z(V)$ by Bowen balls.}

 % It is easy to see that it suffices to show a suitable version of the statement for another map in
% the geodesic flow, i.e.\ for convenience we can also work with the map defined by
% \[
%  T_2(x)=x\begin{pmatrix}{\sqrt{2}}&0\\0& \frac{1}{\sqrt{2}}\end{pmatrix}\mbox{ for all }x\in X.
% \]
Write $a=\begin{pmatrix}e^{\frac12}&0\\0& e^{-\frac12}\end{pmatrix}$, so that $T(x)=xa$.
Since $X_{< M}$ has compact closure, it suffices to restrict ourselves
 to a neighborhood $O$ of a point $x_0\in X_{<M}$.
 By taking the image under $T^N$ it also suffices to study the forward
 orbit as follows. We will show that for the set $V\subset[0,N-1]$ picked, the set
 \begin{multline*}
  Z_O^+=\Bigl\{x\in O\cap T^{-N} X_{< M}: \\
  \mbox{ for all $n\in[0,N-1]$ we have }T^n(x)\in X_{\geq M}\Leftrightarrow n\in V\Bigr\}
 \end{multline*}
 can be covered by
 $\ll_{M}2^{N-\frac{1}{2}|V|}$ forward-Bowen $N$-balls $x B_N^+$ where
 \[
  B_N^+=\bigcap_{n=0}^{N-1}a^{-n}B_\eta^{G} a^n.
  \]
% is defined via $T_2$.
 We may assume that the neighborhood we will consider is of the form
 \[
  O=x_0 B_{\eta/2}^{U^+}B_{\eta/2}^{U^-A}
 \]
 where $B_r^H$ denotes the $r$-ball of the identity in a subgroup $H<\SL_2(\R)$,
 $A$ denotes the diagonal subgroup, and $U^+$ resp.\ $U^-$ denote the unstable
 and stable horocyclic subgroups as in Section \ref{X compact}.

 Notice that by applying $T^n$
 to $O$ we get a neighborhood of $T^n(x_0)$ for which the $U^+$-part
 is $e^n$ times as big while the second part is still contained $B_{\eta/2}^{U^-A}$.
 By breaking the $U^+$-part into $\lceil e^n\rceil$ sets of the form $u^+_i B_{\eta/2}^{U^+}$
 for various $u_i^+\in U^+$ we can write $T_2^n(O)$ as a union of $\lceil e^n\rceil$ sets of the form
 $$
  T^n(x_0)u_i^+B_{\eta/2}^{U^+}a^{-n}
    B_{\eta/2}^{U^-A}a^n,
 $$
 i.e.\ we obtain neighborhoods of similar shape. If we take the preimage under $T^n$ of this set,
 we obtain a set contained in the forward Bowen $n$-ball $T^{-n}(T^n(x_0)u^+_i)B_n^+$. We will be iterating this procedure,
 but using the information that the orbit has to stay above height $M$ for a long
 time we will be able to cut down on the number of $u_i^+\in U^+$ needed to cover $Z_O^+$.

 In the proof of the claim we will use a partition of $[0,N]$ into sub-intervals
 of two types according to the set $V$. Notice that as in the proof of \S \ref{count_V} we can assume that $V$
 itself consists of intervals that are separated by $2\lfloor{2}\log M\rfloor$. For otherwise
 the set $Z_O^+$ is empty since no orbit under $T$ can leave $X_{\geq M}$ and return to it
 in a shorter amount of time.  We enlarge every such subinterval of $V$ by $\lfloor2\log M\rfloor$
 on both sides to obtain the first type of disjoint intervals $\mathcal{I}_1,\ldots,\mathcal{I}_k$.
 At the end points $0$ and $N$ we have required that $x,T^N(x)\in X_{< M}$  for all $x\in Z_O^+$. For this
 reason we can assume without loss of generality that all of these intervals are contained in $[0,N]$. (If this is not the case,
 we can enlarge the interval $[0,N]$ accordingly and absorb the change of the desired upper estimate in the multiplicative
 constant that depends on $M$ alone). The remainder of $[0,N]$ we collect into the intervals
 $\mathcal{J}_1,\ldots,\mathcal{J}_\ell$.

 We will go through the time intervals $\mathcal{I}_i$ and $\mathcal{J}_j$ in their respective order
 inside $[0,N]$. At each stage we will divide any of the  sets obtained earlier into $\lceil e^{|\mathcal{I}_i|}\rceil$- or
 $\lceil e^{|\mathcal{J}_j|}\rceil$-many sets, and in the case of $\mathcal{I}_i$ show that we do not have
 to keep all of them. More precisely, we assume inductively that for some $K\leq N$ we have
 $[0,K]=\mathcal{I}_1\cup\ldots\cup\mathcal{I}_i\cup\mathcal{J}_1\cup\ldots\cup\mathcal{J}_j$ and that
 all points in $Z_O^+$ can be covered by
 \[
  \leq 2 e^{|\mathcal{J}_1|+\cdots +|\mathcal{J}_j|+i\lfloor2\log_2M\rfloor+
     \frac{1}{2}(|\mathcal{I}_1|+\cdots+|\mathcal{I}_i|)}
 \]
 many preimages under $T^K$ of sets of the form
 \begin{equation}\label{onsided-set}
   T_2^K(x_0)u^+ B^{U^+}_{\eta/2}a^{-K}
    B_{\eta/2}^{U^-A}a^K.
 \end{equation}
 Note that for $K=N$ this gives the lemma since by construction
 $|\mathcal{I}_1|+\cdots+|\mathcal{I}_k|=2k\lfloor2\log M\rfloor+|V|$.

 For the inductive step it will be useful to assume a slightly stronger inductive assumption, namely that the multiplicative factor $2$ is only allowed if $[0,K]$ ends with the interval $\mathcal{J}_j$. Therefore, notice that if the next interval is $\mathcal{J}_{j+1}$ (i.e.\ $[0,K]$ ends with $\mathcal{I}_i$) then there is not much to show. In that case we keep all of the $\lceil e^{|\mathcal{J}_{j+1}|}\rceil\leq 2 e^{|\mathcal{J}_{j+1}|}$-many Bowen balls constructed above and obtain the claim.

 So assume now that the next time interval is $\mathcal{I}_{i+1}=[K+1,K+S]$. Here we will make use
 of the geometry of geodesics that visits $X_{\geq M}$ during that subinterval. Pick one of the sets
 \eqref{onsided-set} obtained in the earlier step and denote it by $Y$. By definition of $Z^+_O$
 we are only interested in points $y\in Y$ which satisfy
 \[
  T^n(y)\in X_{\geq M}\Leftrightarrow K+n\in V,
 \]
 or equivalently
 \begin{eqnarray*}
  \height(y),\height\bigl(T(y)\bigr),\ldots,\height\bigl(T^{\lfloor2\log M\rfloor}(y)\bigr)&<&M,\\
  \height\bigl(T^{\lfloor2\log M\rfloor+1}(y)\bigr),\ldots,\height\bigl(T^{S-\lfloor2\log M\rfloor}(y)\bigr)&\geq&M\\
  \height\bigl(T^{S-\lfloor2\log M\rfloor+1}(y)\bigr),\ldots,T^{S}(y))&<&M.
 \end{eqnarray*}
 If there is no such point in $Y$ there is nothing to show. So suppose $y,y'\in Y$ are such points.
 We will use the above restrictions on the heights to show that if
 \begin{equation}\label{y-star}
  y=T_2^K(x_0)u^+ u^+(t)v\mbox{ and }y'=T_2^K(x_0)u^+u^+(t')v'
 \end{equation}
 for $ u^+(t),u^+(t')\in B^{U^+}_{\eta/2}$ and $v,v'$ in the conjugate of $B_{\eta/2}^{U^-A}$, then
 $|t-t'|\ll 2^{-S/2}$.
 We can draw the geodesic orbits defined by $y$ and $y'$ in the upper half model of the hyperbolic plane
 and relate the conditions on $y,y'$ to geometric information about these geodesics. We choose the lifting of the paths
 in such a way that the time interval for which the height is above $M$ becomes the part of the geodesic
 where the imaginary part is above $M^2$.

 For the translation of the properties we will use the following
 observation: For two points $z_1,z_2\in\mathbb{H}$ on a geodesic line that are either both on the upwards part or both
 on the downwards part of the corresponding semi-circle their hyperbolic distance satisfies
 \begin{equation}\label{hyp-dist}
  |\log\operatorname{Im}(z_1)-\log\operatorname{Im}(z_2)|\leq d(z_1,z_2)\leq
    |\log\operatorname{Im}(z_1)-\log\operatorname{Im}(z_2)|+1.
 \end{equation}
 The lower bound actually gives the shortest distance between points with imaginary part $\operatorname{Im}(z_1)$
 and points with imaginary part $\operatorname{Im}(z_2)$. The upper bound gives the length of a path that first
 connects the point lower down, say $z_1$, to the point $z'$ immediately above with imaginary part $\operatorname{Im}(z_2)$
 and then moves horizontally to a point that is $\operatorname{Im}(z_2)$ far to the left or right of $z'$ towards $z_2$.
 For two points $z_1,z_2$ on the upwards or downwards part of a semi-circle this path actually goes through $z_2$.

 Applying the lower bound in \eqref{hyp-dist} to the points corresponding to
 $$
 y\mbox{ and }T_2^{\lfloor2\log M\rfloor+1}(y)
 $$
 whose hyperbolic distance is
 $\lfloor2\log M\rfloor+1$ we see that $\operatorname{Im}(y)\gg 1$
 (where in a slight abuse of notation we identify $y$ with the lifted point in $\mathbb{H}$). Similarly,
 we get from the upper bound for $y$ and $T_2^{\lfloor 2\log M\rfloor}(y)$ that
 $\operatorname{Im}(y)\ll 1$. Similar estimates hold for $T_2^S(y),y'$ and $T_2^S(y')$.

 We assume that the points $y,y'$ are lifted in such a way that $\operatorname{Re}(y)\in[-1/2,1/2]$
 and such that $y'$ is close to $y$. Denote by $\alpha_-,\alpha_+\in\R$ the backwards and forward
 limit points of the geodesic defined by $y$ on the boundary of $\mathbb{H}$ and similarly by $\alpha_-',\alpha_+'$
 the endpoints of the geodesic for $y'$.
 Then $|\alpha_-|<2+\frac{1}{2}$ since the lifting of the point $y$ was chosen
 such that the times of height $\geq M$ in $X$ correspond to imaginary part $\geq M^2$. For $y'$ this implies for small enough $\eta$ that
 $|\alpha_-'|<3$.

 Let $R=\frac{1}{2}|\alpha_+-\alpha_-|$ be the radius of the half circle defined by $y$ and define $R'$ similarly for $y'$.
 Then the above shows $R\ll|\alpha_+|\ll R$ once $M$ and so $R$ are large enough to make $\alpha_-$ negligible in
 comparison to $\alpha^+$. Similarly $R'\ll|\alpha_+'|\ll R'$.

 Applying \eqref{hyp-dist} twice, once for $y$ and the point $z$ on the same geodesic with imaginary part $R$,
 and once for $z$ and $T_2^S(y)$ we get
% \[
 % 2\log R-\log\operatorname{Im}(y)-\log\operatorname{Im}(T_2^S(y))\leq S\log 2
 %   \leq 2\log R-\log\operatorname{Im}(y)-\log\operatorname{Im}(T_2^S(y))+2.
% \]
% Together with the above we have
 \begin{equation}\label{S-R}
  |S -2\log R|\ll 1 \mbox{ and similarly } |S -2\log R'|\ll 1.
 \end{equation}
 Therefore, $R\ll R'\ll R$ and so $|\alpha_+|\ll|\alpha_+'|\ll|\alpha_+|$.

 Suppose $g=\begin{pmatrix}a&b\\ c&d\end{pmatrix}\in\SL(2,\R)$ defines $y=T_2^K(x_0)u^+ u^+(t)v$ in the sense
 that the natural action of $g$ maps the upwards vector at $i$ to the
 vector associated to $y$ for the lifting considered above. Then $\alpha_+=g(\infty)=\frac{a}{c}$ and
 $\alpha_-=g(0)=\frac{b}{d}$.
 Similarly, suppose $g'$ defines $y'=T_2^K(x_0)u^+u^+(t')v'$ such that $\alpha_+'=g'(\infty)$.
 Using this notation we summarize what we already know about these matrices
 \begin{equation}\label{estimate-9}
 \begin{split}
  \max(|a|,|b|,|c|,|d|)&\ll 1,\\
  R\ll |\alpha_+|=|\frac{a}{c}|&\ll R, \\
   R\ll |\alpha_+'|&\ll R,\mbox{ and }\\
  |\alpha_-|=|\frac{b}{d}|&\ll 1.
  \end{split}
 \end{equation}
 Here the first estimate follows since we know roughly the position of the lift corresponding to $y$
 which means that $g$ belongs to a compact subset of $\SL(2,\R)$.
 We claim the above implies that
 \begin{equation}\label{estimate-10}
  1\ll|d|,\ 1\ll|a|,\mbox{ and } |c|\ll |a|R^{-1}\ll R^{-1}.
 \end{equation}
 The first estimate follows since $|b|\ll|d|$ by the last estimate in \eqref{estimate-9} and since
 $g\in\SL(2,\R)$ belongs to a compact subset so that not both $b$ and $d$ are small.
 The second claim follows similarly from the second estimate in \eqref{estimate-9}.

 To simplify the following calculation we would
 like to remove the elements $v,v'$ (as in \eqref{y-star})
 from our consideration --- but to do this we need to see how this affects
 the above statements. Recall first that
 $v,v'\in B^{U^-A}_\eta$ and so $v(\infty)=v'(\infty)=\infty$.
 Therefore, the first three estimates above remain unaffected when
 changing $g$ resp.\ $g'$ on the right by  $v^{-1},(v')^{-1}$.
 Moreover, we have $|v^{-1}(0)|\ll\eta$ and so for small enough $\eta$
 that $1\ll|d|\ll |cv^{-1}(0)+d|$ which implies $|gv^{-1}(0)|\ll 1$.
 In other words, none of the estimates in \eqref{estimate-9} are affected (apart from possibly
 the values of the implicit constants) by the proposed transition
 from $g$ to $gv^{-1}$ resp.\ $g'$ to $g'(v')^{-1}$ and we can assume $v=v'=e$.

 Comparing the definitions of $y$ and $y'$ we get $g'=g u^+(t)^{-1}u^+(t')$.
 Therefore,
 $$
  \alpha_+'=g'(\infty)=\bigl(gu^+(t'-t)\bigr)(\infty)=\frac{\frac{a}{t'-t}+b}{\frac{c}{t'-t}+d}=
   \frac{a+b(t'-t)}{c+d(t'-t)}.
 $$
 Since $1\ll |a|$, $ u^+(t),u^+(t')\in B^{U^+}_{\eta/2}$, and so $|t'-t|\ll \eta$
 we can simplify the numerator and obtain together with the third estimate in \eqref{estimate-9}
 that for small enough $\eta>0$
 $$
  R\ll \bigl|\frac{a}{c+d(t'-t)}\bigr|\ll R,
 $$
 or equivalently
 $$
  R^{-1}\ll|c+d(t'-t)|\ll R^{-1}.
 $$
 Since $|c|\ll R^{-1}$ and $1\ll |d|$ by \eqref{estimate-10} this implies the estimate $|t'-t|\ll R^{-1}$.
 Now recall from \eqref{S-R} that $e^{S/2}\ll R$, so that we get the desired $|t'-t|\ll e^{-S/2}$

 Recall next that in the current time interval $\mathcal{I}_{i+1}$
 we divide $B_{\eta/2}^{U^+}$ into $\lceil e^S\rceil$ balls of the form $B_{e^{-S}\eta/2}^{U^+}$.
 Since all points $y'$ that belong to $Y\cap T^K(Z_O^+)$ satisfy the estimate $|t'-t|\ll e^{-S/2}$
 we see that only $\ll e^Se^{-S/2}= e^{S/2}$ can (after the correct thickening along $AU^-$)
 contain an element of $Y\cap T^K(Z_O^+)$. This implies the inductive
 claim if we assume $M$ is sufficiently large so that the upper bound we got
 is strictly bounded from above by $\frac12 e^{\lfloor 2\log M\rfloor+S/2}$.

This concludes the proof of Proposition \ref{cover-lemma}.

\subsection{Entropy and covers; proof of Theorem \ref{entropy-cusp}}
For the proof of Theorem~\ref{entropy-cusp} we need to relate entropy and covers via
Bowen balls. For this we need the following (well known) result, which is proved in Appendix~\ref{Appendix-B} below (for cocompact $\Gamma$ it follows from Brin and A.~Katok~\cite{Brin-Katok}).

\begin{Lemma}\label{cover-entropy-main}
 Let $\mu$ be an $A$-invariant measure on $X=\Gamma\backslash\SL(2,\R)$.
 For any $N\geq 1$ and $\epsilon>0$ let $BC(N,\epsilon)$ be the minimal
 number of Bowen $N$-balls needed to cover any subset of $X$ of measure bigger than $1-\epsilon$.
 Then
 \[
  h_\mu(T)\leq\lim_{\epsilon\rightarrow 0}\liminf_{N\rightarrow\infty}\frac{\log BC(N,\epsilon)}{2N}
 \]
 where $T$ is the time-one-map of the geodesic flow.
\end{Lemma}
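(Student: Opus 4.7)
The plan is to carry out the classical Katok-type argument relating measure-theoretic entropy to the exponential growth rate of Bowen covers. The strategy is to construct a finite partition $\Pcr$ of $X$ fine enough that, for typical points, a single Bowen $N$-ball is contained in few atoms of the refined partition $\Pcr_{[-N,N]} := \bigvee_{n=-N}^N T^{-n}\Pcr$, and then to use a cover by Bowen $N$-balls to bound $H_\mu(\Pcr_{[-N,N]})$, hence $h_\mu(T,\Pcr)$.

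Fix $\epsilon > 0$. I would first choose a finite measurable partition $\Pcr$ with $h_\mu(T,\Pcr) \geq h_\mu(T) - \epsilon$, whose atoms have diameter less than some $\eta$ (one atom being allowed to be a cusp region $X_{\geq M}$ for $M$ large), and such that for some $\alpha > 0$ the $\alpha$-neighborhood of the boundary satisfies $\mu(\partial^\alpha \Pcr) < \delta$ (where $\delta > 0$ is a small auxiliary parameter). Such a $\Pcr$ exists because $\mu$ is a regular Borel measure: the ``compact-part'' atoms can be taken to be Borel sets built from balls with $\mu$-null boundary spheres, and then $\mu(\partial^\alpha \Pcr) \to 0$ as $\alpha \to 0$. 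Next I would apply the pointwise ergodic theorem to $1_{\partial^\alpha \Pcr}$: by Egorov's theorem there is a set $G$ with $\mu(G) \geq 1-\delta$ on which the Birkhoff averages $f_N(x) := \frac{1}{2N+1}\sum_{n=-N}^N 1_{\partial^\alpha \Pcr}(T^n x)$ satisfy $f_N(x) \leq 2\delta$ for all $N \geq N_0$.

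The key geometric observation is that if $x, y \in G$ both belong to a common Bowen $N$-ball of scale $\alpha$, then at every time $n \in [-N,N]$ with $T^n x \notin \partial^\alpha \Pcr$ the Bowen condition $d(T^n x, T^n y) < \alpha$ forces $T^n y$ into the same $\Pcr$-atom as $T^n x$. Hence the $\Pcr$-names of $x$ and $y$ can disagree at most at the $\leq 2\delta(2N+1)$ ``bad'' times, and a single Bowen $N$-ball meets at most $|\Pcr|^{2\delta(2N+1)}$ atoms of $\Pcr_{[-N,N]}$ within $G$. Given a cover of a set $Y$ with $\mu(Y) \geq 1-\delta$ by $K = BC(N,\delta)$ Bowen $N$-balls, the atoms of $\Pcr_{[-N,N]}$ meeting $Y \cap G$ number at most $K\cdot |\Pcr|^{2\delta(2N+1)}$, covering mass at least $1-2\delta$. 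The standard bound for a partition mostly supported on few atoms then gives
\begin{equation*}
  H_\mu(\Pcr_{[-N,N]}) \leq \log\bigl( K|\Pcr|^{2\delta(2N+1)} \bigr) + \log 2 + 2\delta(2N+1)\log|\Pcr|,
\end{equation*}
which, after dividing by $2N+1$ and using that $(2N+1)^{-1}H_\mu(\Pcr_{[-N,N]}) \to h_\mu(T,\Pcr)$ by $T$-invariance, yields $h_\mu(T,\Pcr) \leq \liminf_N \log BC(N,\delta)/(2N) + O(\delta \log|\Pcr|)$. Letting $\delta \to 0$ with $\Pcr$ fixed, and then $\epsilon \to 0$, completes the proof.

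The main obstacle, I expect, is the construction of $\Pcr$ in the non-compact setting: the cusp atom $X_{\geq M}$ need not have an $\alpha$-neighborhood $\partial^\alpha X_{\geq M}$ of small measure, so one has to choose $M$ carefully (using the escape-of-mass estimates developed earlier in the paper) or else split the cusp into a countable refinement whose boundary spheres $\{\Ht = c_i\}$ are $\mu$-null, and verify that such adaptations do not spoil the Birkhoff and entropy estimates. A second technical point is that $\mu$ is not assumed ergodic, so the Birkhoff averages converge to the conditional expectation on the $\sigma$-algebra of $T$-invariant sets; this is harmless since only the $L^1$-norm and Egorov convergence of $f_N - \mathbb{E}(1_{\partial^\alpha \Pcr}\,|\,\mathcal{I})$ are used.
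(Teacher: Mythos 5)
Your argument is correct, but it is a genuinely different route from the one taken in the paper (Lemma~\ref{cover-entropy} in Appendix~\ref{Appendix-B}). You run the classical Katok/Brin--Katok-type counting argument: fix the Bowen-ball scale, control the frequency of visits to the boundary neighbourhood $\partial^{\alpha}\Pcr$ via the pointwise ergodic theorem and Egorov, and conclude that one Bowen ball meets at most $|\Pcr|^{O(\delta N)}$ atoms of $\Pcr^{[-N,N]}$ on the good set, an entropy loss $O(\delta\log|\Pcr|)$ per unit time that disappears as $\delta\to 0$. The paper avoids the ergodic theorem altogether: it chooses the partition so that $\mu\bigl((\partial S)B_\kappa^{G}\bigr)<C\kappa$ for all $\kappa$, and then shrinks the comparison scale with $N$ (taking $\eta'=\eta N^{-2}$), so that a crude union bound over the $2N+1$ times gives a bad set of measure $\ll N^{-1}$, off which each small Bowen ball lies in a \emph{single} atom of $\Pcr^{[-N,N]}$; a fixed-scale Bowen ball is then covered by $\ll N^{6}$ shrunken ones, a polynomial and hence harmless cost. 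The trade-off: the paper's route needs the quantitative boundary condition (obtained from a.e.\ differentiability of $r\mapsto\mu(B_r(x))$) but no ergodicity or Egorov bookkeeping; yours needs only $\mu$-null boundaries but must (a) treat non-ergodicity --- your ``$f_N\leq 2\delta$ on measure $\geq 1-\delta$'' should be a Markov-inequality statement about the conditional expectation, giving $\sqrt{\delta}$-type constants, which, as you say, changes nothing in the limit --- and (b) reconcile the scale $\alpha$ tied to $\Pcr$ with the fixed scale implicit in $BC(N,\epsilon)$: a Bowen $(N,\eta)$-ball is covered by $O_{\eta,\alpha}(1)$ Bowen $(N,\alpha)$-balls, so the exponential rate is unaffected (the paper makes this same remark right after the lemma), but you should state it since your geometric step genuinely needs the smaller scale. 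Your worry about the cusp atom is handled exactly as in the paper: take the atom $X_{\geq M}$ with cut height $M$ chosen so that $\mu\{\Ht=M\}=0$; its boundary is compact, so its $\alpha$-neighbourhood has small measure for small $\alpha$. Finally, note that you only use the easy implication (membership in a translate $xB_{N,\alpha}$ implies orbit closeness), so the paper's warning that dynamical balls may exceed group-theoretic Bowen balls in the noncompact setting does not affect your argument.
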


\begin{proof}[Proof of Theorem \ref{entropy-cusp}]
 Note first that it suffices to consider ergodic measures. For if $\mu$ is not
 ergodic, we can write $\mu$ as an integral of its ergodic components $\mu=\int\mu_t \operatorname{d}\!\tau(t)$
 for some probability space $(T,\tau)$. Therefore, $\mu(X_{\geq M})=\int\mu_t(X_{\geq M})\operatorname{d}\!\tau(t)$
 but also $h_\mu(T)=\int h_{\mu_t}(T)\operatorname{d}\!\tau(t)$ by \cite[Thm.~8.4]{Walters},
 so that the desired estimate follows from the ergodic case.

 Suppose $\mu$ is ergodic. To apply Lemma \ref{cover-entropy-main} we need to show
 that most of $X$ can be covered by not too many Bowen $N$-balls.  Once $M>3$
 we have that every $T$-orbit visits $X_{<M}$, and so $\mu(X_{<M})>0$.
 By the ergodic theorem there exists for every $\epsilon>0$ some $K\geq 1$ such that
 $$
  Y=\bigcup_{k=0}^{K-1} T^{-k} X_{<M}\mbox{ satisfies }\mu(Y)>1-\epsilon.
 $$
 Moreover, also by the ergodic theorem
 \[
  \frac{1}{2N+1}\sum_{n=-N}^{N}1_{X_{\geq M}}(T^n(x))\rightarrow\mu(X_{\geq M})
 \]
 as $N\rightarrow\infty$ for a.e.\ $x\in X$.
 So for large enough $N$ the average on the left will be bigger than $\kappa=\mu(X_{\geq M})-\epsilon$
 for any $x\in X_1$ and some subset $X_1\subset X$ of measure $\mu(X_1)>1-\epsilon$.
 Clearly for any $N$ the set
 $$
 Z=X_1\cap T^N Y\cap T^{-N} Y
 $$
 has measure bigger than $1-3\epsilon$. Recall that we are interested in the asymptotics
 of the minimal number of Bowen $N$-balls needed to cover $Z$.
 Here $N\rightarrow\infty$ while $\epsilon$ and so also $K$ remain fixed.
 Since we can decompose $Z$ into $K^2$ many sets of the form
 $$
  Z'=X_1\cap T^{N-k_1}X_{<M}\cap T^{-N-k_2}X_{<M},
 $$
 it suffices to cover these, and for simplicity of notation we assume $k_1=k_2=0$.
 Next we split $Z'$ into the sets $Z(V)$ as in Proposition \ref{cover-lemma} for the various subsets $V\subset[-N,N]$.
\S \ref{count_V} shows that we need at most $\ll_M e^{\frac{2\log\log M}{\log M}N}$ many of these. Moreover,
 by our assumption on $X_1$ we only need to look at sets $V\subset[-N,N]$
 with $|V|\geq\kappa (2N+1)$. Therefore, Proposition~\ref{cover-lemma} gives
 that each of those sets $Z(V)$ can be covered by $\ll_{M}e^{(1-\frac{\kappa}{2})2N}$ many Bowen $N$-balls.
 Together we see that $Z$ can be covered by $\ll_{M,K} e^{\frac{2\log\log M}{\log M}N+(1-\frac{\kappa}{2})2N}$ Bowen $N$-balls.
 Applying Lemma \ref{cover-entropy-main} we arrive at
 $$
  h_\mu(T)\leq 1+\frac{\log\log M}{\log M}-\frac{\mu(X_{\geq M})-\epsilon}{2}
 $$
 for any $\epsilon>0$, which proves the theorem.
\end{proof}

%%%%%%%%%%%%%%%%%%%%%%%%%%%%%%%%%%%%%%%%%%%%%%%%%%%%%%%%%%%%%%%%%%%%%%%%%%%

\appendix

\section{Representations of binary quadratic forms by ternary forms}\label{Appendix-A}

 In this section we establish Proposition \ref{2-3reps}:
 \begin{prop*}
Let $Q$ be an non-degenerate, integral\footnote{I.e.\ $Q(\Zz^3)\subset\Zz$.}  ternary quadratic form on $\Z^3$, and let
$$q(x,y) = a_1 x^2+a_2  xy+a_3 y^2$$
be a non-degenerate binary quadratic form on $\Z^2$.  Let $f^2$ be the greatest square dividing $\gcd(a_1, a_2, a_3)$. Then the number $N(q)$ of embeddings
of $(\Z^2, q)$ into $(\Z^3, Q)$, modulo the action of $\so_Q(\Z)$, is $\ll_{Q,\epsilon} f \max(|a_1|,|a_2|,|a_3|)^{\epsilon}$.
\end{prop*}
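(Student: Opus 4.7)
The plan is to factor an arbitrary embedding as a \emph{primitive} embedding followed by an integral linear pullback on the source, and bound each piece separately. The primitive part will be rigid (bounded in count by a structural theorem of Gauss type), while the matrix part is controlled by a divisor-style estimate that contributes the factor $f$.

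\textbf{Step 1 (Decomposition).} Given $\iota : (\Z^2, q) \hookrightarrow (\Z^3, Q)$, let $\bar L \subset \Z^3$ denote the saturation of $\iota(\Z^2)$: a primitive rank-$2$ sublattice. I would factor $\iota = \iota_0 \circ M$ with $\iota_0 : \Z^2 \xrightarrow{\sim} \bar L$ primitive and $M \in M_2(\Z)$ having nonzero determinant. Letting $q_0$ be the primitive binary form on $\Z^2$ proportional to $Q|_{\bar L}$ (absorbing its scalar content into $M$), one has $q = q_0 \circ M$ and $\disc q = (\det M)^2 \disc q_0$, and the pair is unique modulo the diagonal action of $\mathrm{Aut}(q_0)$. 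Hence
\[
 N(q) \ \le \ \sum_{[\iota_0, q_0]}\ \#\bigl\{M \in \mathrm{Aut}(q_0) \backslash M_2(\Z) : q_0 \circ M = q\bigr\}.
\]

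\textbf{Step 2 (Rigidity via orthogonal complement).} For each primitive $\iota_0$ with image $\bar L$, the $Q$-orthogonal complement $\bar L^{\perp} \cap \Z^3$ is spanned by a primitive vector $\alpha \in \Z^3$, unique up to sign, with $Q^{*}(\alpha) = c_Q \cdot \disc q_0$ for a constant $c_Q$ depending only on $Q$ (here $Q^{*}$ is the adjoint ternary form). Conversely, $\alpha$ recovers $\bar L = \alpha^{\perp} \cap \Z^3$ together with its induced binary form. By the classical Gauss correspondence, extended to arbitrary integral ternary $Q$, the $\SO_Q(\Z)$-orbits of primitive $\alpha$ with prescribed $Q^*(\alpha)$ and with the induced form on $\alpha^{\perp}$ in a fixed $\GL_2(\Z)$-class are in number $O_Q(1)$. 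Combined with $|\mathrm{Aut}(q_0)| = O(1)$, this yields $O_Q(1)$ primitive embeddings per primitive $q_0$, uniformly in $q_0$.

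\textbf{Step 3 (Divisor-style bound on $M$).} It remains to count, for each admissible $(q_0, \iota_0)$, the matrices $M \in M_2(\Z)$ with $q_0 \circ M = q$ modulo $\mathrm{Aut}(q_0)$. Setting $r = |\det M|$, one has $r^2 \mid \disc q / \disc q_0$; a $p$-adic content comparison (using that $q_0$ is primitive while the greatest square divisor of $\gcd(a_1, a_2, a_3)$ equals $f^2$) forces $f \mid r$. For a fixed $r$, the columns of $M$ are vectors in $\Z^2$ with prescribed $q_0$-values, and the number of such vector pairs (with the cross-term constraint) is $\ll_\epsilon \max(|a_i|)^{\epsilon}$ by the standard divisor estimate on primitive-binary-form representations. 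Summing over admissible $r$ with $f \mid r \mid \sqrt{\disc q}$ and over the associated primitive $q_0$ (each determined by $M$ via $B_{q_0} = M^{-T} B_q M^{-1}$), and using $\sqrt{|\disc q|} \ll \max |a_i|$, produces the bound
\[
 N(q) \ \ll_{Q,\epsilon} \ f \cdot \max(|a_i|)^{\epsilon}.
\]

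The main obstacle will be Step 2, namely the uniform $O_Q(1)$ bound on primitive embeddings of a primitive $q_0$ into $Q$ independent of $\disc q_0$. Naively one fears a class-number-sized set of primitive vectors of given $Q^*$-norm; the point is that fixing the $\GL_2(\Z)$-class of the complementary binary form cuts this down to a bounded count. Rigorously establishing this correspondence for an arbitrary integral ternary $Q$ (not merely $Q = x^2 + y^2 + z^2$ as in Venkov), and performing the necessary local analysis at primes dividing $2 \det Q$, is the core technical content of Appendix~A.
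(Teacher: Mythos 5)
The decisive problem is Step 2. The rigidity claim there --- that the number of $\SO_Q(\Z)$-orbits of primitive vectors $\alpha$ with prescribed $Q^*(\alpha)$ \emph{and} prescribed $\GL_2(\Z)$-class of the complementary binary form is $O_Q(1)$ --- is false, and it is exactly the point where the whole difficulty of the proposition sits. Already for $Q=x^2+y^2+z^2$ and $d$ squarefree, the number of primitive $\alpha$ with $Q(\alpha)=d$ is $\asymp h(-4d)$ up to bounded factors, so there are $\asymp h(-4d)$ orbits under the (finite) integral orthogonal group; but by Gauss's theory the complement-form map hits only the classes in a fixed genus-theoretic coset and hits each of them equally often, so its fibers are torsors under (essentially) $\Cl(-4d)[2]$ and have size $\asymp 2^{\omega(d)}$, which is unbounded. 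Fixing the class of the complementary form therefore pins $\alpha$ down only up to a genus ambiguity, not up to $O_Q(1)$. The best true statement for Step 2 is a bound of the shape $\ll_{Q,\epsilon}|\disc(q_0)|^{\epsilon}$ --- but that \emph{is} the primitive case of the proposition, i.e.\ the entire content of Appendix A, and the proposal offers no proof of it beyond the appeal to a ``Gauss correspondence for arbitrary integral ternary $Q$,'' which does not by itself yield any such bound. The paper handles precisely this step by a different mechanism: it fixes one embedding with image $L$, uses Witt's theorem and the triviality of the pointwise stabilizer to get $N(L)\leq\prod_p N(L_p)$, and then bounds each local factor (the number of maximal integral lattices in $\Q_p^3$ containing $L_p$) by a walk on the Bruhat--Tits tree, obtaining $N(L_p)\ll (b_p+1)^2 p^{\lfloor a_p/2\rfloor}$ in terms of the local invariants $(a_p,b_p)$ of $q$; multiplying over $p$ gives the factor $f\max(|a_i|)^{\epsilon}$.

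There are also secondary defects which, while likely repairable, show the bookkeeping is not yet sound. In Step 1 the normalization is unavailable: for a saturated $\bar L\subset\Z^3$ the restriction $Q|_{\bar L}$ need not be primitive (e.g.\ for $Q=x^2+y^2+z^2$ and $\bar L$ spanned by $(1,1,0),(0,1,1)$ one gets $2(x^2+xy+y^2)$), and its content is in general not a perfect square, so it cannot be ``absorbed into $M$'' by an integral matrix; one must instead work with $Q|_{\bar L}$ itself, which changes the divisibility analysis. In Step 3 the factor $f$ and the sums over $r$ and over the classes of $q_0$ are not coherently organized (the relevant classes $q_0$ are not parametrized by $M$, and for fixed $q_0$ the count of $M$ modulo $\mathrm{Aut}(q_0)$ already comes out as $\ll_\epsilon\max(|a_i|)^{\epsilon}$ with no visible source for the factor $f$, which in the paper's argument arises from the local lattice counts $p^{\lfloor a_p/2\rfloor}$, not from counting matrices). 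None of these patches matter, however, until the Step 2 count is actually proved, and that is the real gap.
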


We recall that an embedding of $(\Z^2, q)$ into $(\Z^3, Q)$ is a linear map $\iota: \Z^2 \rightarrow \Z^3$
with the property that $Q(\iota(\x)) = q(\x)$.  Such proposition was established for the first time by {Venkov} for $Q=x^2+y^2+z^2$ and extended by Pall to other quadratic forms \cite{Venkov,Pall}. The proposition can be deduced from Siegel's {\em mass formula}; here we present a direct and elementary argument inspired by the adelic proof of Siegel's mass formula as outlined by Tamagawa (cf.\ Weil's paper~\cite{Weil}).
\begin{Remark}

\begin{itemize}
\item[-] One may wonder what the dependency on $Q$ in the above bound looks like; this is for instance important to obtain equidistribution results when $Q$ is allowed to vary (see for instance \cite[Thm. 1.8]{EMV}). In the case where $Q$ is a multiple of the norm form on a lattice in the space of trace zero elements of a quaternion algebra whose associated order is an Eichler order, it can be shown that the dependency is of the shape $\ll_{\eps}|\disc(Q)|^{1/2+\eps}\dots$. It seem plausible that this holds in general.
\item[-]The argument provides, in fact, an upper bound for the the sum over a set of representatives $Q_{i},\ i=1,\ldots, g$ of the genus classes of $Q$, of the number of embeddings of $(\Zz^2,q)$ into $(\Zz^3,Q_{i})$ modulo $\SO_{Q_{i}}(\Zz)$.
\item[-]Finally it is easy to see that this argument carries over without significant changes to quadratic forms defined over a general number field.
\end{itemize}
\end{Remark}
\subsection{Reduction to local counting problems}\label{counting-local}
Fix an embedding $\iota: (q,\Z^2) \hookrightarrow (\Z^3, Q)$ and let $$L:=\iota(\Z^2)$$ be its image  (if no such embedding exists, we are obviously done.) Then any other embedding $\iota'$
is (by Witt's theorem; see \cite[IV.1.5, Theorem 3]{Serrebook}) of the form $g \circ \iota$, with $g \in \so_Q(\Q)$.  The stabilizer of $\iota$
inside $\so_Q(\Q)$ is trivial, for any isometry fixing $L$ pointwise would need to map $L^{\perp}$ to itself and so must be multiplication by $\pm 1$ on $L^{\perp}$;
the condition of determinant $1$ forces it to be the identity. The number of embeddings $N(L)$ (up to the action of $\so_Q(\Z)$) is therefore
precisely the number of cosets $\dot{g} \in \so_Q(\Z) \backslash \so_Q(\Q)$
so that $g L\subset \Z^3$.

Given a rational lattice $\Lambda\subset \Qq^3$, for any prime $p$ we denote by
$$\Lambda_{p}=\Lambda\otimes_{\Zz}\Zp$$
its closure inside $\Qp^3$. Let us recall that the map $$\Lambda\mapsto (\Lambda_{p})_{p}$$
is a bijection between the set of lattices in $\Qq^3$ and the set of sequences of lattices indexed by the primes $(\Lambda_{p})_{p}$, $\Lambda_{p}\subset\Qp^3$ such that $\Lambda_{p}=\Zp^3$ for a.e.\ $p$. Write $K_p=\SO_{Q}(\Zp)$ for the stabilizer of $\Zp^3$ inside $\SO_{Q}(\Qp)$ and let
$$\SO_{Q}(\Af)=\{ g_{f}=(g_{p})_{p},\ g_{p}\in\SO_{Q}(\Qp),\ g_{p}\in \SO_{Q}(\Zp)\hbox{ for a.e.\ $p$}\};$$
 the above bijection induces an action of $\SO_{Q}(\Af)$ on the set of rational lattices:
$$g_{f}.\Lambda\leftrightarrow g_{f}.(\Lambda_{p})_{p}:=(g_{p}\Lambda_{p})_{p}.$$

\begin{rem}The group $\SO_{Q}(\Af)$ is the group of {\em finite ad\`eles} of $\SO_{Q}$. The $\SO_{Q}(\Af)$-orbit of a lattice $\Lambda\in\Qq^3$ under this action is called the {\em $Q$-genus} of $\Lambda$. We will not need much of this terminology or discuss further properties of adelic groups here.
\end{rem}
The group $\SO_{Q}(\Qq)$ embeds diagonally into $\SO_{Q}(\Af)$. Now the stabilizer of $\Z^3$ in $\SO_{Q}(\Af)$ is $K_{f} = \prod_{p} \SO_{Q}(\Zp)$ and since $K_{f} \cap \so_Q(\Q) = \so_Q(\Z)$, $\so_Q(\Z) \backslash \so_Q(\Q)$
injects into $K_{f} \backslash \so_Q(\adele_f)$.

Consequently, letting $L_p=L\otimes_{\Zz}\Zp$ be the closure of $L$ inside $\Z_p^3$, we have
\begin{align*}
N(L) &\leq |\{ g_f  \in K_f \backslash \so_Q(\Af): g_f . L \subset \Z^3\}|\\
&\leq \prod_{p} |\{ g_p  \in \SO_{Q}(\Zp) \backslash \so_Q(\Q_p): g_p . L_p \subset \Z_p^3\}|
\\
&= \prod_{p} | \{g_p  \in \so_Q(\Q_p)/\SO_{Q}(\Zp): L_p \subset g_p \Z_p^3\}|=\prod_{p}N(L_{p}).
\end{align*}
with
$$N(L_p) =| \{g_p  \in \so_Q(\Q_p)/K_{p}: L_p \subset g_p \Zp^3\}|=|\{\Lambda \in \so_Q(\Q_p).\Zp^3: L_p \subset \Lambda\}|$$
being the number of lattices in $\Qp^3$, within the $Q$-isometry class of $\Zp^3$ that contain $L_{p}$. We have proven that
$$N(L)\leq \prod_{p}N(L_p)$$
 and thus have reduced our counting problem to a collection of local counting problems (as we will see below $N(L_{p})=1$ for a.e. $p$); a more careful analysis of what we have said so far is very closely related to the proof of the mass formula. In the present paper, however, we need only upper bounds.

\subsection{The anisotropic case and a reduction step}\label{unramified-reduction}
 We first introduce some notations. We denote by $$\peter{\bfx,\bfx'}=Q(\bfx+\bfx')-Q(\bfx)-Q(\bfx')$$ the bilinear form associated with $Q$; so $\peter{\bfx,\bfx}=2Q(\bfx)$.
% (sometimes it will be useful to consider the inner product $\bfx.\bfx'=\frac12\peter{\bfx,\bfx'}$).\marginpar{is $\bfx.\bfx'$ used?}
The discriminant of $Q$ is set to be
$$\disc(Q)=\det(\peter{\bfx_i,\bfx_j})_{i,j\leq 3}$$ for $\{x_1,x_2,x_3\}$ any basis of $\Z^3$. Since $Q$ is integral
$\peter{\Z^3,\Z^3}\subset\Zz$, so $\disc(Q)$ is a non-zero integer.

We notice first that if $Q$ does not represent $0$ nontrivially over $\Qp$ (i.e.\ is anisotropic over $\Qp$), then $\SO_{Q}(\Qp)$ is  compact and
\begin{equation}\label{anisotropic-bound}
	 N(L_{p})\leq [\SO_{Q}(\Qp):\SO_Q(\Zp)]\ll_Q 1
\end{equation}
 This (favorable) situation can occur only if $p$ divides $\disc(Q)$.

We suppose now that $Q$ is isotropic over $\Qp$ for some prime $p\ |\ 2\disc (Q)$,
we will reduce the problem of bounding $N(L_p)$ to the case where the integral quadratic form is given by $Q(x,y,z)=xy+z^2$. We note that $\disc(xy+z^2)=2$. This reduction comes with the cost that we also have to replace $q$ by a different quadratic form $q'=up^{m_p}q$ with $u\in\Zp^*$ and $m_p\geq 0$. However, we only have to make this change for $p\ |\ 2\disc(Q)$ and $m_p$ will only depend on $Q$. Using these facts we will see in Subsection \ref{appaproof} that the bound for the number of representations of $q'$ by $xy+z^2$ will suffice for the proof of Proposition \ref{2-3reps}.

We claim that there exists a basis of $\Q_p^3$ over $\Q_p$ so that the quadratic form $Q$ with respect to the coordinates of this basis has  the form $up^{-\ell}(xy+z^2)$ for some $u\in\Zp^*$ and $\ell\in\{0,1\}$. Indeed as $Q$ is isotropic, there exists a hyperbolic plane in $\Q_p^3$. Complementing the basis of the hyperbolic plane with a vector of the orthogonal complement we arrive at a basis so that $Q$ has the form $xy+u p^{-\ell} z^2$ with $u\in\Zp^*$ and $\ell\in\Z$. If necessary we may replace the last basis vector by a multiple and can ensure that $\ell\in\{0,1\}$. Similarly we may divide the first basis vector by $up^{-\ell}$ and arrive at the claim.

Let $\Lambda$ be the $\Z_p$-lattice in $\Q_p^3$ spanned by the above basis. There exists some $k$ (depending only on $\Lambda$) so that $p^{k}\Z_p^3\subset\Lambda$. Let $\iota:(\Z_p^2,q)\to(\Z_p^3,Q)$
be an embedding of $q$. Then $p^{k}\iota : (\Z_p^2,p^{2k}q)\to (\Lambda,Q)$ and finally
$$
 p^{k}\iota : (\Z_p^2,u^{-1}p^{2k+\ell}q)\to (\Lambda,u^{-1}p^\ell Q)\simeq(\Z_p^3,xy+z^2)
$$
 are also
embeddings of quadratic lattices. We set $m_p=2k+\ell$ and $q'=u^{-1}p^{m_p}q$ and obtain
that there is an injection from the set of embeddings $\iota:(\Z_p^2,q)\to(\Z_p^3,Q)$
to the embeddings $\iota':(\Z_p^2,q')\to(\Z_p^3,xy+z^2)$.

\IGNORE{
Given any prime $p$, since $Q$ is integral, $Q(\Zp^3)\subset \Zp$ and $\peter{\Zp^3,\Zp^3}\subset\Zp$ as well. The {\em dual} of $\Zp^3$ is defined as
$$(\Zp^{3})^*:=\{\bfz\in\Qp^{3},\ \peter{\bfz,\Zp^3}\in\Zp\};$$
in particular $\Zp^3\subset (\Zp^3)^*$ and its index is given by
$$[(\Zp^3)^*:\Zp^3]=p^{v_p(\disc(Q))}.$$
In particular, if $p\ndiv\disc(Q)$ then $\Zp^3=(\Zp^3)^*$.

Let $\Lambda_p$ be a lattice containing $\Zp^3$ such that
$Q(\Lambda_p)\subset\Zp$ and is maximal for this  property. We call such a lattice {\em unramified}.

  Observe that since $\Z_p^3\subset \Lambda$ and $\peter{\Lambda_p,\Lambda_p}\subset\Zp$, $\Lambda_{p}$ is contained $(\Zp^3)^*$; in particular if $p\ndiv \disc(Q)$ (that is for all but finitely many $p$), $\Lambda_{p}=\Zp^3$.

Let $K_{p}=\SO_{Q}(\Lambda_{p})$ denote the stabilizer of $\Lambda_p$ and let
$K'_p:=\SO_{Q}(\Zp)\cap K_p$; these are open compact subgroups of $\SO_Q(\Qp)$ and $K'_p$ has finite index in $K_p$; moreover for $p\ndiv \disc(Q)$ , $K_p=\SO_Q(\Zp)=K'_p$. We have
\begin{align*}N(L_p)&\leq |\{g\in SO_Q(\Qp)/K'_p,\ L_p\subset g\Zp^3\}|\\
&\leq  |\{g\in SO_Q(\Qp)/K'_p,\ L_p\subset g\Lambda_{p})\}|\\
&\leq [K_p:K'_p]N(L_{p};\Lambda_{p})
\end{align*}
where for $\Lambda\subset\Qp^3$, we have set
$$N(L_{p};\Lambda):=|\{\Lambda'\in\SO_{Q}(\Qp)\Lambda,\ L_{p}\subset\Lambda'\}|.$$
It follows from this discussion that
$$N(L)\leq \prod_\stacksum{p}{Q\ p-\mathrm{anisotropic}}[\SO_{Q}(\Qp):\SO_Q(\Zp)]\times
\prod_\stacksum{p}{Q\ p-\mathrm{isotropic}}[K_{p}:K'_p] \cdot N(L_{p};\Lambda_{p}).$$
As we have already remarked, the first product above is finite while for the second $[K_{p}:K'_p]=1$ whenever $p\ndiv \disc(Q)$. Moreover $[K_{p}: K_{p}']$ depends only on $Q$
even if $p$ divides $\disc(Q)$. }

\subsection{The case of an unramified lattice} \label{unramified-local}
The previous section reduces the proof of Proposition~\ref{2-3reps} to the problem of finding an upper bound for $N(L_p)$ where we may assume that either $p\, \ndiv\, 2\disc(Q)$ or that $Q(x,y,z)=xy+z^2$.
This will be done in the following two local counting lemmas which depend on whether $p=2$ or $p>2$:

Recall that for $p>2$ any quadratic form
$q$ on some rank two $\Zp$-lattice $L$ taking value in $\Zp$ may be written, in a suitable basis, in the form \begin{equation}\label{A-0}
 q(xe_1+ye_2)=up^ax^2+vp^by^2,\ u,v\in\Zp^\times,\ 0\leq a\leq b\in\Zz_{\geq 0}.
\end{equation}
To see this take an element $e_1 \in L$ such that the valuation of $q(e_1)$ is minimal and then take the orthogonal complement of $e_1$, cf.\ \cite[Sect.\ 8.3]{Cassels}.  We shall call the integers $a\leq b$ the {\em invariants} of the quadratic form (e.g.\ the invariants of $x^2+5 y^2$ over $\Z_5$ are $(0,1)$). This is a kind of $p$-adic analogue of the notion of successive minima. The invariants determine the quadratic form over $\Z_p$ -- up to isometry -- up  to $O(1)$ possibilities. We will prove the following lemma.

\begin{Lemma}\label{padic} Let $p>2$, let $Q$ be an isotropic quadratic form over $\Qp^3$
%	, let $\Lambda\subset\Qp^3$ be a lattice satisfying $Q(\Lambda)\subset\Zp$ and which is maximal for this property.
so that $p\,\ndiv\,\disc(Q)$.
Let $L\subset \Lambda$ be a rank $2$-sublattice such that $Q_{|L}$ has invariants $(a,b)$, then
$$N(L;\Lambda):=|\{\Lambda' \in \so_Q(\Q_p).\Lambda: L \subset \Lambda'\}|
\ll(b+1)^2%(1+\frac{3}p)
p^{\lfloor a/2\rfloor}$$ where the implied constant is  absolute. Moreover, if $(a,b)=(0,0)$, $N(L;\Lambda)=1$
\end{Lemma}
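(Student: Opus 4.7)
The plan is to reduce the count to an enumeration problem on a finite quadratic module. Since $p>2$ and $p\nmid\disc(Q)$, the standard classification of non-degenerate rank-three quadratic $\Z_p$-lattices in an unramified isotropic space implies that all unimodular $\Z_p$-lattices in $(\Q_p^3,Q)$ form a single $\SO_Q(\Q_p)$-orbit; consequently $N(L;\Lambda)$ equals the number of unimodular lattices $\Lambda'\subset\Q_p^3$ with $L\subset\Lambda'$, each of which lies inside $L^\#:=\{v\in\Q_p^3:\langle v,L\rangle\subset\Z_p\}$. I would fix a basis $e_1,e_2$ of $L$ diagonalizing $Q|_L$, so that $Q(e_1)=up^a$, $Q(e_2)=vp^b$ with $u,v\in\Z_p^\times$, together with a generator $e_3$ of the one-dimensional orthogonal complement $L_\Q^\perp\subset\Q_p^3$.

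Next I would parametrize each $\Lambda'$ by a pair $(M',v)$, where $M':=\Lambda'\cap L_\Q$ is a sublattice with $L\subset M'\subset L^\#_{L_\Q}=p^{-a}\Z_p e_1+p^{-b}\Z_p e_2$ and $v\in\Lambda'$ is a lift of a generator of the rank-one quotient $\Lambda'/M'\subset L_\Q^\perp$. Decomposing $v=v_\parallel+v_\perp$ along $L_\Q\oplus L_\Q^\perp$, integrality of $\Lambda'$ forces $v_\parallel\bmod M'\in M'^{\#}_{L_\Q}/M'$ together with $Q(v_\parallel)+Q(v_\perp)\in\Z_p$; a block-determinant computation shows that unimodularity of $\Lambda'$ is equivalent to $\det(G_{M'})\cdot Q(v_\perp)\in\Z_p^\times$, which pins down the valuation of $Q(v_\perp)$ to be $-(a+b-2m)$ where $m:=[M':L]$, and determines $v_\perp$ up to $\Z_p^\times$. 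The base case $(a,b)=(0,0)$ is then immediate: $L$ is already unimodular, forcing $m=0$ and $M'=L$, while $v_\perp$ is the unique unit generator of $L_\Q^\perp$, so $\Lambda'=L\oplus\Z_p v_\perp$ uniquely.

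For the general case I would stratify over the integer $m\in[0,(a+b)/2]$ (yielding $O(b)$ strata since $a\leq b$), and, for each $m$, over subgroups $M'/L\subset L^{\#}_{L_\Q}/L\cong\Z/p^a\oplus\Z/p^b$ of order $p^m$. For each such $M'$, the admissible class $v_\parallel\bmod M'$ is a solution of the quadratic equation $Q(\cdot)\equiv-Q(v_\perp)\pmod{\Z_p}$ in the finite discriminant module $M'^{\#}_{L_\Q}/M'$ of order $p^{a+b-2m}$, whose solution count is bounded by a Gauss-sum estimate in terms of the invariants of $M'$. The main obstacle is the combinatorial bookkeeping: summing these contributions across strata to obtain the sharp bound $(b+1)^2 p^{\lfloor a/2\rfloor}$, where $p^{\lfloor a/2\rfloor}$ arises from isotropic-like solutions in the $\Z/p^a$-factor of the discriminant group and the polynomial prefactor $(b+1)^2$ absorbs divisor-type counts coming from the stratification in the $\Z/p^b$-direction. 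Making this count tight without losing additional factors of $p$ requires careful Gauss-sum analysis exploiting the unramified structure of the quadratic form on $M'^{\#}_{L_\Q}/M'$, and I expect this to be the principal technical step.
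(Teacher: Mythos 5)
Your reduction is fine as far as it goes: for $p$ odd with $p\nmid\disc(Q)$ all unimodular lattices in $(\Qp^3,Q)$ are isometric, hence form a single $\SO_Q(\Qp)$-orbit (compose with $-\Id$ to fix the determinant), so $N(L;\Lambda)$ is indeed the number of unimodular overlattices of $L$, and the parametrization of such a $\Lambda'$ by $M'=\Lambda'\cap(L\otimes\Qq_p)$ together with a generator of $\Lambda'/M'$ is workable; the determinant computation pinning down the valuation of $Q(v_\perp)$ is also correct. This is a genuinely different route from the paper, which never counts overlattices directly: there the set $\mcR(L)$ of admissible lattices is viewed inside the Bruhat--Tits tree $\mcT_Q$, shown to be connected, the cases $(a,b)=(0,0)$, $(0,b)$, $(1,b)$ are handled by elementary geometric arguments (a single vertex, a bounded segment of a geodesic of length $O(b)$, at most two vertices), and for $a\geq 2$ an inclusion $\mcR(L)\subset\mcR(L_1)\cup\mcR(L_2)\cup\bigcup_{\Lambda'\in\mcR(\frac1pL)}B(\Lambda',1)$ feeds an induction on the invariants, producing $p^{\lfloor a/2\rfloor}$ from balls of radius $\lfloor a/2\rfloor$ and $(b+1)^2$ from the base cases.

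However, your proposal has a genuine gap: the entire quantitative content of the lemma is deferred. You stratify over \emph{all} subgroups $M'/L\subset \Z/p^a\oplus\Z/p^b$ of order $p^m$, but you omit the constraint that $Q$ must be integral on $M'$ itself (forced by $M'\subset\Lambda'$); without it the number of strata alone can be as large as roughly $p^{\min(a,m,a+b-m)}$, which already destroys the bound, and with it the admissible $M'$ correspond to isotropic-type subgroups of the discriminant module, whose count is precisely the kind of estimate you would still need to prove. Likewise the solution count for $Q(v_\parallel)\equiv -Q(v_\perp)$ in $M'^{\#}/M'$ is governed by sharp valuation and parity constraints that you do not address: for example, for $M'=L$ and $a\geq 1$ there are \emph{no} solutions at all, since $v_p(Q(v_\parallel))\geq -b$ while unimodularity forces $v_p(Q(v_\perp))=-(a+b)$, and $Q(v_\perp)=wt^2$ fixes the parity of that valuation. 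So the dominant contributions come from special strata, and the heuristic attribution of $p^{\lfloor a/2\rfloor}$ and $(b+1)^2$ to particular factors is unsubstantiated; your own text concedes that the ``careful Gauss-sum analysis'' needed to make the count close is the principal technical step, but that step \emph{is} the lemma. As it stands the proposal is a plausible plan, not a proof.
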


In the $2$-adic case, any quadratic form
$q$ on some rank $2$ $\Z_2$-lattice $L$ taking value in $\Z_2$ may be written, in a suitable basis either (cf. \cite[Lemma 2.1]{hanke} and \cite[Sect.\ 8.4]{Cassels}) in the form
\begin{equation}\label{2adicform1}
q(xe_1+ye_2)=u2^ax^2+v2^by^2,\ u,v\in\Z_2^\times,\ 0\leq a\leq b\in\Zz_{\geq 0},
\end{equation}
or in the form
\begin{equation}\label{2adicform2}q(xe_1+ye_2)=u2^bx^2+w2^{a}xy+v2^by^2,\ u,v,w\in\Z_2^\times,\ 0\leq a\leq b\in\Zz_{\geq 0}.
\end{equation}
In both cases we will refer to $a\leq b$ once more as the invariants of $q$.
We have the following lemma.
\begin{Lemma}\label{2adic} Consider $Q(x,y,z)=xy+z^2$ as a quadratic form over $\Qq_2^3$, let $\Lambda\subset\Qq_2^3$ be a lattice satisfying  $Q(\Lambda)\subset\Zz_2$ and which is maximal for this property. Let $L\subset \Lambda$ be a rank $2$-sublattice such that $Q_{|L}$ has invariants $(a,b)$, then
$$N(L;\Lambda)\ll(b+1)^2%(1+\frac{3}2)
2^{\lfloor a/2\rfloor}$$ where the implied constant is  absolute.
\end{Lemma}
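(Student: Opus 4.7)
The plan is to follow the same outline as the proof of Lemma~\ref{padic}, adapting each step to the two specific complications at $p=2$: the existence of the non-orthogonal normal form~\eqref{2adicform2} and the fact that $\Z_2^\times/(\Z_2^\times)^2$ has order $4$ rather than $2$, so that Hensel-style arguments require working modulo $8$ instead of modulo $p$.

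I would begin by verifying that $\Lambda = \Z_2^3$, equipped with the standard basis $e_1,e_2,e_3$ in which $Q(x,y,z) = xy+z^2$ decomposes as a hyperbolic plane orthogonal to the square form $z^2$, is a maximal lattice with $Q(\Lambda)\subseteq\Z_2$. Since $Q$ is isotropic over $\Q_2$, all maximal integral lattices in $\Q_2^3$ lie in the $\SO_Q(\Q_2)$-orbit of $\Lambda$, so $N(L;\Lambda)$ counts precisely the maximal integral lattices containing $L$. Applying Witt's theorem to the isotropic space $(\Q_2^3,Q)$ identifies
\[
N(L;\Lambda) \;=\; \bigl|\SO_Q(\Z_2) \backslash \{\text{isometric embeddings } (L,q)\hookrightarrow(\Z_2^3,Q)\}\bigr|.
\]

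In the diagonal normal form \eqref{2adicform1}, with $L = \Z_2 f_1 \perp \Z_2 f_2$ and $Q(f_1)=u2^a$, $Q(f_2)=v2^b$, I would enumerate embeddings $\iota$ by first choosing $w_1 = \iota(f_1)\in\Z_2^3$ modulo $\SO_Q(\Z_2)$ and then choosing $w_2 = \iota(f_2)$ in $w_1^\perp\cap\Z_2^3$ modulo the stabilizer of $w_1$. Writing $w_1 = 2^j w_1'$ with $w_1'$ primitive, the index $j$ ranges over $\{0,1,\ldots,\lfloor a/2\rfloor\}$; for each fixed $j$, the count of primitive representations of $u 2^{a-2j}$ by the unimodular isotropic form $xy+z^2$ over $\Z_2$ yields $O(1)$ $\SO_Q(\Z_2)$-orbits of $w_1'$. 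The orthogonal complement $w_1^\perp\cap\Z_2^3$ is a rank-$2$ integral $\Z_2$-lattice whose discriminant scales as $2^{2j}$ times a unit, and the count of orbits of $w_2$ with $Q(w_2)=v2^b$ in this lattice decomposes once more by divisibility, contributing $\ll (b+1)\cdot 2^j$ (the $2^j$ factor encoding the widening of the complement as $w_1$ becomes more divisible). Summing gives
\[
\sum_{j=0}^{\lfloor a/2\rfloor} (b+1)\cdot 2^j \;\ll\; (b+1)\cdot 2^{\lfloor a/2\rfloor},
\]
and multiplying by an additional $\ll b+1$ factor from the divisibility of $w_2$ produces the bound $(b+1)^2\cdot 2^{\lfloor a/2\rfloor}$. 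For the non-diagonal normal form \eqref{2adicform2}, $L$ does not admit an orthogonal $\Z_2$-basis, but Gram--Schmidt over $\Q_2$ against $f_1$ produces an orthogonal $\Q_2$-basis generating a sublattice $L'\subseteq L$ of bounded $2$-adic index, with invariants differing from $(a,b)$ by bounded additive constants; applying the diagonal case to $L'$ and absorbing $[L:L']=O(1)$ into the implied constant gives the same estimate for $L$.

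The main obstacle will be the uniform $O(1)$ bound on the number of $\SO_Q(\Z_2)$-orbits of primitive vectors of fixed norm $u\,2^{a-2j}$ in the unimodular lattice $\Z_2^3$ with $Q=xy+z^2$. For odd $p$ this follows directly from Hensel's lemma reducing to $\F_p$, but at $p=2$ Hensel's lemma applies only modulo $8$, so one must separately treat the four square classes of $\Z_2^\times$, verifying in each case that only a bounded number of primitive orbits occur. This step is essentially a $2$-adic analogue of the computation already performed for odd $p$; while it does not alter the shape of the final estimate, it is the point where the implied constant picks up its dependence on $p=2$, and it requires some careful bookkeeping that is invisible in the odd case.
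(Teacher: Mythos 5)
Your reduction of $N(L;\Lambda)$ to counting $\SO_Q(\Z_2)$-orbits of isometric embeddings is fine, and the two-step scheme (choose $w_1=\iota(f_1)$ up to $\SO_Q(\Z_2)$, then $w_2=\iota(f_2)$ up to the stabilizer of $w_1$) is a legitimate framework. The gap is in the quantitative heart of step 2. Writing $w_1=2^jw_1'$ with $w_1'$ primitive, the complement $w_1^\perp\cap\Lambda$ equals $(w_1')^\perp\cap\Lambda$ and so does not depend on $j$ at all; its discriminant is, up to bounded factors, $Q(w_1')\asymp 2^{a-2j}$, not $2^{2j}$ as you claim (e.g.\ for $w_1'=(1,u2^{a-2j},0)$ in $xy+z^2$ the complement is $\langle 1\rangle\perp\langle -u2^{a-2j}\rangle$). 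More seriously, the asserted bound ``$\ll (b+1)\cdot 2^j$ orbits of $w_2$'' is false, and false already at $j=0$, which is in fact where the main contribution lives: in the complement $M\cong\langle 1\rangle\perp\langle -u2^{c}\rangle$ with $c=a-2j$, the vectors of norm $-u2^{c}w$ ($w$ a unit) form a set on which the norm-one group of $\Z_2[\sqrt{u2^{c}}]$ (which is what the stabilizer of $w_1$ acts through) moves the relevant coordinate only inside cosets of $2^{\lceil c/2\rceil}\Z_2$, so the number of orbits is $\asymp 2^{c/2}=2^{(a-2j)/2}$, not $O(b+1)$. Thus the factor $2^{\lfloor a/2\rfloor}$ in the lemma comes from the \emph{smallness of the stabilizer of a nearly primitive $w_1$} (small $j$), not from ``widening of the complement'' at large $j$; your per-$j$ bounds are wrong even though their sum happens to reproduce the statement of the lemma. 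A related untreated point: the group acting on $M$ is only the image of $\mathrm{Stab}_{\SO_Q(\Z_2)}(w_1)$, which can have large index in $\SO(M)(\Z_2)$ (it must preserve the glue data of $\Lambda\supset \Z_2w_1'\oplus M$), so bounding orbits under the full $\SO(M)(\Z_2)$ would not suffice anyway. Controlling exactly this count --- vectors of prescribed norm in a binary $2$-adic lattice of large discriminant, modulo a small automorphism group --- \emph{is} the content of the lemma; it is what the paper's tree argument achieves through the connectedness of $\mcR(L)$, the base cases $(a,b)\in\{(0,0),(0,b),(1,b)\}$ (where at $p=2$ the special vector $\bar k$ forces $\mcR(L)$ to lie within distance one of a geodesic of length $O(b)$), and the induction on $a$ via $L_1,L_2,\tfrac12L$ and the covering inclusion by balls of radius $\lfloor a/2\rfloor$.

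Two smaller soft spots: the $O(1)$ bound on $\SO_Q(\Z_2)$-orbits of primitive vectors of fixed norm in the maximal lattice is true but you defer it, and at $p=2$ it does require the case analysis you mention (the two possible values of $\peter{v,\Lambda}$, i.e.\ whether $\bar v$ equals the special vector mod $2$, must be treated separately). Also, your reduction of the non-diagonal form \eqref{2adicform2} to the diagonal case is misstated: a form $u2^bx^2+w2^axy+v2^by^2$ with $a<b$ does admit an orthogonal sublattice of index $2$, but its invariants are approximately $(a,a)$, not $(a,b)$ up to additive constants; since $a\leq b$ the resulting bound $(a+1)^22^{\lfloor a/2\rfloor}$ is still $\leq (b+1)^22^{\lfloor a/2\rfloor}$, so this step can be repaired, unlike the main counting step above, which needs a genuinely different argument.
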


The proof of these two lemma will use a geometric interpretation of the quotient $\SO_Q(\Qp^3)/\SO_Q(\Lambda)$.

\subsection{The Bruhat-Tits tree} Let $Q$ be an isotropic quadratic form such that $p\,\ndiv\,\disc(Q)$ or   $Q(x,y,z)=xy+z^2$.
Note that $\Lambda_0=\Zp^3$ has the property that $Q(\Lambda_0)\subset\Zp$ and that $\Lambda_0$ is maximal for this property.
We set $$\mcT_Q=\SO_Q(\Qp^3)\Lambda_0\simeq \SO_Q(\Qp^3)/K_p.$$
Even though this will not be used here, let us also mention that $\mcT_Q$
is the set of all lattices $\Lambda$ in $\Qp^3$ such that $$Q(\Lambda)\subset\Zp$$ and which are maximal for this property (see \cite[Cor. 4.17]{GI}).

We will need that $\mcT_Q$ has the structure of a $(p+1)$-regular tree (see \cite{BT}) in which
$\Lambda, \Lambda'$ are adjacent if and only if $\Lambda \cap \Lambda'$
has index $p$ in $\Lambda$ (or equivalently in $\Lambda'$).
More generally, the distance $d(\Lambda,\Lambda')$ between two vertices $\Lambda, \Lambda'$ satisfies
$$p^{d(\Lambda,\Lambda')}=[\Lambda:\Lambda\cap\Lambda']=[\Lambda':\Lambda\cap\Lambda'].$$
and the geodesic between $\Lambda$ and $\Lambda'$
consists of all $\Lambda''\in\mcT_Q$ satisfying $\Lambda \cap \Lambda' \subset \Lambda''$.

Let us describe the adjacency structure on $\mcT_Q$ more explicitly using the quadratic structure.
Given any lattice $\Lambda\in \mcT_{Q}$,
and any primitive $\bfv \in \Lambda$ (i.e.\ $\bfv\notin p\Lambda$) for which $\obfv=\bfv+p\Lambda\in\Lambda/(p\Lambda)$
is isotropic over $\Fp$ (i.e.\ $p\ |\ Q(\bfv)$)
we can define a lattice $\Lambda_{\obfv} \in \mcT_Q$, which only depends on the line through $\obfv$, as follows.
Since
\begin{equation}\label{elem}
 Q({a}\mathbf{v}+\mathbf{z})={a^2}Q(\mathbf{v})+Q(\mathbf{z})+ a\peter{\bfz,\bfv}
 \in\Zp
\end{equation}
and since the linear form $\peter{\cdot,\obfv}$ is not zero (even for $p=2$),
we may modify $\bfv$ by some element $p\mathbf{z}_0\in p\Lambda$ to ensure that $p^2\ |\ Q(\bfv+p\bfz_0)$. Here the element $\bfz_0$ is uniquely determined by $\bfv$ up to $\{ \mathbf{z} \in \Lambda: \peter{\bfz,\bfv} \equiv 0 \mbox{ mod $p$}\}$.
Therefore, the lattice
 $$\Lambda_{\obfv} := \frac1p\Z_p(\mathbf{v}+p\bfz_0)  + \{ \mathbf{z} \in \Lambda: \peter{\bfz,\bfv} \equiv 0 \mbox{ mod $p$}\}
 $$
depends only on $\obfv$, indeed only on the line through $\obfv$. Using \eqref{elem} we see quickly
that $Q(\Lambda_{\obfv})\subset\Z_p$.
Below we will always assume that $p^2\ |\ Q(\bfv)$ and set $\bfz_0=0$.

Under our assumptions on $Q$ this lattice $\Lambda_\obfv\in\mcT_Q$ is a neighbor of $\Lambda$, and
there are exactly $p+1 =| \mathbb{P}^1(\Fp)|$ such lines,
and thus every neighbor arises.

%\begin{rem} Under our assumptions, one can show that $(\Lambda,Q)$ is isometric to the quadratic lattice $(\Zp^3,\pm(b^2+ac))$ or, equivalently, to the space $(M_{2}^0(\Zp),\pm\det)$ of trace $0$, $2\times 2$ matrices with coefficients in $\Zp$ endowed with the determinant quadratic form; this is obvious for the quadratic form of interest in the present paper, namely $b^2-4ac$ (for $p\not=2$). Then, under this isometry, one has $\SO_{Q}\simeq\PGL_{2}$ and the orbit $$\SO_{Q}(\Qp).\Lambda$$ is identified with $\PGL_{2}(\Qp).[\Zp^2]$, the space of homothety classes of lattices in $\Qp^2$ whose structure as a $p+1$-regular tree is described in \cite{Serre}.
%\end{rem}

We will use also the following simple facts:
%

%and $\bfw\in\Lambda$ such that $Q(\bfw)=1$, then $\bfw$ can be completed into a
% (Witt) $\Zp$-base
% $$\{\bfw,\bfv,\bfv'\}$$
% where $\bfv,\bfv'$ generate the hyperbolic plane orthogonal to $\bfw$, ie.
% $$Q(\bfv)=Q(\bfv')=\peter{\bfv,\bfw}=\peter{\bfv',\bfw}=0,\ \peter{\bfv,\bfv'}=Q(\bfw)=1.$$
% Given such a basis and $k\in \Zz$ let $g_{\bfw}\in\SO_{Q}(\Qp)$ be the $Q$-isometry
% $$g_{\bfw}:\ \bfv\mapsto \bfv/p,\ \bfv\mapsto p\bfv', \bfw\mapsto\bfw;$$
% $g_{\bfw}$ generates a maximal split torus (the stabilizer of $\bfw$ in $\SO_{Q}(\Qp)$) and
% $$\{g_{\bfw}^{k}\Lambda,\ \bfw\in\Lambda,\ Q(\bfw)=1,\ k\in\Zz\}=\SO_{Q}(\Qp)\Lambda.$$
% We have
% $$g_{\bfw}\Lambda=\langle \mathbf{v}/p \rangle + \{ \mathbf{z} \in \Lambda: \peter{\bfz,\bfv} \equiv 0 \mbox{ mod $p$}\}=:\Lambda_{\obfv}$$
%where $\obfv\in\Lambda/p\Lambda\simeq \Fp^{3}$ denote the reduction modulo $p$ of $\bfv$ : as $g_{\bfw}\Lambda$ depends only on the isotropic line generated by $\obfv$ in $\Lambda/p\Lambda$ the notation is justified.  Notice that
% $$\Lambda\cap\Lambda_{\obfv}=\Zp\bfv+\Zp p\bfv'+\Zp\bfw=\{\bfz\in\Lambda,\ \peter{\bfz,\bfv}\equiv 0(p)\}$$ is the preimage of $\obfv^\perp\in\Fp^3$ (with respect to the reduction modulo $p$ of the bilinear form $\peter{\ ,\ }$ on $\Lambda/p\Lambda$ ) and has index $p$ in $\Lambda$ and $\Lambda'$.

%
%These properties follow from:
\begin{enumerate}
 \item For an isotropic $\obfv$ we have
$$
 \Lambda\cap\Lambda_\obfv=\Zp\bfv+\{\mathbf{z}\in\Lambda: \peter{\bfv,\mathbf{z}} \equiv 0 \mbox{ mod $p$}\}.
$$
 \item For $\obfv, \obfv'$ generating distinct isotropic lines the intersection
 $$\Lambda_{\obfv}\cap \Lambda_{\obfv'}= \{ \mathbf{z} \in \Lambda: \peter{\mathbf{v} , \mathbf{z}}\equiv \peter{\mathbf{v}' , \mathbf{z}}\equiv 0 \mbox{ mod $p$}\}=\Zp \bfw+p\Lambda$$
 is the preimage in $\Lambda$ of the orthogonal subspace $(\Fp\obfv+\Fp\obfv')^\perp\subset\Fp^3$.\label{preimageline}
 \item Given three isotropic vectors $\obfv,\obfv',\obfv''$ generating distinct lines and assuming $p>2$ we have
 $$\Lambda_{\obfv}\cap \Lambda_{\obfv'}\cap \Lambda_{\obfv''}=p\Lambda.$$\label{threeintersec}
\end{enumerate}

One establishes also the following generalization:
\begin{prop}\label{intersectprop}
Let $\Lambda$ lie at the mid-point of the geodesic between $\Lambda'$ and $\Lambda''$
(i.e. there is   $n\geq1$ such that $d(\Lambda,\Lambda')=d(\Lambda,\Lambda'')=n$, $d(\Lambda',\Lambda'')=2n$). There exists a primitive $\bfv\in \Lambda$ so that $Q(\bfv)\equiv 0(p^n)$ and $\bfw\in \Lambda$ with $Q(\bfw)\not\equiv 0(p)$ and $\peter{\bfv,\bfw}\equiv 0(p^n)$ so that
 $$\Lambda\cap \Lambda'=\{\bfz\in \Lambda,\ \peter{\bfz,\bfv}\equiv 0(p^n)\}=\Zp\bfv+\Zp\bfw+p^n\Lambda$$
 and
 $$\Lambda'\cap \Lambda''=\Zp\bfw +p^n\Lambda$$ is the preimage of the non-isotropic line defined by $w$
 under the projection $\Lambda\mapsto \Lambda/p^n\Lambda$. Moreover, for $m\leq n$, let $\Lambda'_m$ be the lattice on the segment $[\Lambda,\Lambda']$ at distance $m$ from $\Lambda$, then
 $$\Lambda\cap \Lambda'_m=\{\bfz\in \Lambda,\ \peter{\bfz,\bfv}\equiv 0(p^m)\}=\Zp\bfv+\Zp\bfw+p^m\Lambda\supset \Lambda\cap \Lambda'.$$
 \end{prop}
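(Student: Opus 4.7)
The plan is to prove the proposition by induction on $n$, with the base case $n = 1$ coming directly from facts (1), (2), and (3) above: the neighbor of $\Lambda$ toward $\Lambda'$ has the form $\Lambda_{\overline{\bfv}}$ for an isotropic vector $\overline{\bfv}$ (whose chosen lift already satisfies $p^2 \mid Q(\bfv)$), the neighbor toward $\Lambda''$ corresponds to a different isotropic line, and since any two distinct isotropic lines in a non-degenerate three-dimensional quadratic space over $\Fp$ span a hyperbolic plane whose orthogonal complement is anisotropic, the vector $\bfw$ supplied by fact (2) has $Q(\bfw)$ a unit.

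For the inductive step from $n-1$ to $n$, let $\tilde{\Lambda}' = \Lambda^{(n-1)}$ and $\tilde{\Lambda}'' = \Lambda^{(-(n-1))}$ be the second-to-last vertices on each side, so that $\Lambda$ is the midpoint of the geodesic $[\tilde{\Lambda}', \tilde{\Lambda}'']$ of length $2(n-1)$. The inductive hypothesis yields primitive $\bfv_0, \bfw_0 \in \Lambda$ with $p^{n-1} \mid Q(\bfv_0)$, $Q(\bfw_0) \in \Zp^{\times}$, $\peter{\bfv_0, \bfw_0} \equiv 0 \pmod{p^{n-1}}$, and the stated descriptions of the intersections at level $n-1$. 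The heart of the argument is a Hensel-style lift: choose $\bfr \in \Lambda$ with $\peter{\bfv_0, \bfr}$ a unit modulo $p$ (possible by primitivity of $\bfv_0$ together with non-degeneracy of $\bar{Q}$ on $\Lambda/p\Lambda$) whose direction is dictated by the specific neighbor $\Lambda'$ of $\tilde{\Lambda}'$ selected by the non-backtracking property of the geodesic. Setting $\bfv := \bfv_0 + p^{n-1} t \bfr$ where $t \in \Zp$ solves $Q(\bfv_0) + p^{n-1} t \peter{\bfv_0, \bfr} \equiv 0 \pmod{p^n}$ produces a primitive vector with $p^n \mid Q(\bfv)$; the cross term $p^{2(n-1)} Q(\bfr)$ is harmless because $2(n-1) \geq n$ for $n \geq 2$.

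I would then adjust $\bfw_0$ to $\bfw$ by a multiple of $\bfv_0$ so that $\peter{\bfv, \bfw} \equiv 0 \pmod{p^n}$ while keeping $Q(\bfw)$ a unit, and verify the intersection formulas by index counts in $\Lambda / p^n \Lambda$: the set $\{\bfz \in \Lambda : \peter{\bfz, \bfv} \equiv 0 \pmod{p^n}\}$ is the kernel of a surjective linear form valued in $\Zp / p^n \Zp$, so it has index $p^n$ in $\Lambda$; it contains $\bfv$ (since $p > 2$ gives $\peter{\bfv, \bfv} = 2 Q(\bfv) \equiv 0 \pmod{p^n}$), $\bfw$, and $p^n \Lambda$, and a dimension count forces equality with $\Zp \bfv + \Zp \bfw + p^n \Lambda$ (of index $p^n$ in $\Lambda$ because $\overline{\bfv}$ is isotropic while $\overline{\bfw}$ is anisotropic, so the two are $\Fp$-independent). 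Finally $\Lambda \cap \Lambda'$ is an index-$p^n$ sublattice of $\Lambda$ contained in $\Lambda \cap \tilde{\Lambda}' = \{\bfz : \peter{\bfz, \bfv_0} \equiv 0 \pmod{p^{n-1}}\}$, and tracking which index-$p$ sublattice of the latter is selected by the extension $\tilde{\Lambda}' \to \Lambda'$ identifies it with $\{\bfz : \peter{\bfz, \bfv} \equiv 0 \pmod{p^n}\}$. The formula $\Lambda' \cap \Lambda'' = \Zp \bfw + p^n \Lambda$ follows by running the symmetric argument on the $\Lambda''$ side.

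The final assertion for $m \leq n$ comes essentially for free from the construction: $\Lambda'_m$ lies on the geodesic $[\Lambda, \Lambda']$, and since the same $\bfv, \bfw$ automatically satisfy $p^m \mid Q(\bfv)$ and $\peter{\bfv, \bfw} \equiv 0 \pmod{p^m}$ when $m \leq n$, running the argument truncated at level $m$ yields $\Lambda \cap \Lambda'_m = \Zp \bfv + \Zp \bfw + p^m \Lambda$. I expect the main technical obstacle to be verifying that the Hensel-lifted $\bfv$ built from the $\Lambda'$ side and its counterpart from the $\Lambda''$ side can be taken to coincide; this is precisely where the midpoint hypothesis is used to force a single coherent pair $(\bfv, \bfw)$ to describe both geodesic rays simultaneously.
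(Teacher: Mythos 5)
The paper states this proposition without proof (it is offered as a generalization of facts (1)--(3)), so your argument has to stand on its own; the base case does, but the inductive step --- which is the entire content --- does not work as written. Your Hensel-type correction $\bfv=\bfv_0+p^{n-1}t\bfr$ with $\peter{\bfv_0,\bfr}$ a unit and $t$ chosen only to force $Q(\bfv)\equiv 0\ (p^n)$ points in the wrong direction. Whenever $Q(\bfv_0)\equiv 0\pmod{p^{n}}$ already holds (which is typical; indeed the paper's own normalization of lifts arranges $p^2\mid Q(\bfv_0)$ at the base step), your congruence forces $t\equiv 0\pmod p$, hence $\bfv\equiv\bfv_0\pmod{p^{n}\Lambda}$, so your construction can describe only one of the $p$ admissible continuations of the geodesic past $\tilde\Lambda'$. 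Concretely, take $Q=xy+z^2$, $\Lambda=\Zp^3$, $\Lambda_1=\Zp p^{-1}e_1+\Zp pe_2+\Zp e_3$ (so one may take $\bfv_0=e_1$), and continue to the neighbor of $\Lambda_1$ determined by the isotropic vector $p^{-1}e_1-c^2pe_2+ce_3$, $c\in\Fp$; a direct computation gives $\Lambda\cap\Lambda'=\{\bfz\in\Lambda:\ \peter{\bfz,\,e_1+cpe_3}\equiv 0\ (p^2)\}$, so the required correction is $cp\,e_3$, and $\peter{e_1,e_3}=0$ --- orthogonal to $\bfv_0$, not of unit pairing. In general the correction is dictated by the branch $\Lambda'$ and lies in the residue class $\peter{\bfv_0,\bfs}\equiv -p^{1-n}Q(\bfv_0)\pmod p$ (typically $\bfs\in\bfv_0^{\perp}$ mod $p$), and the divisibility $p^{n}\mid Q(\bfv)$ must then be \emph{deduced} from $Q(\Lambda')\subset\Zp$; it is not a constraint you get to satisfy with a free parameter $t$. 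The same degeneration breaks your adjustment of $\bfw_0$ by a multiple of $\bfv_0$, since there $\peter{\bfv,\bfv_0}\equiv 2Q(\bfv_0)\equiv 0\pmod{p^{n}}$ as well.

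Beyond this, the two decisive claims are asserted rather than proved: that the particular index-$p$ refinement of $\Lambda\cap\tilde\Lambda'$ selected by $\Lambda'$ is exactly $\{\bfz:\peter{\bfz,\bfv}\equiv 0\ (p^n)\}$ (your ``tracking which index-$p$ sublattice'' sentence), and the coherence of the $\Lambda'$- and $\Lambda''$-sides needed to produce a single pair $(\bfv,\bfw)$ with $\Lambda'\cap\Lambda''=\Zp\bfw+p^n\Lambda$, which you yourself flag at the end as the main obstacle but never resolve. Two ways to repair this: either keep the induction but take the correction of $\bfv_0$ in the direction prescribed by the branch as above (verifying the congruence on $Q(\bfv)$ from integrality of $Q$ on $\Lambda'$, and constructing $\bfw$ from fact (2) applied to the two isotropic lines at $\Lambda$ given by $\bfv$ and its analogue for the $\Lambda''$-side); or, more efficiently, use that the stabilizer $\SO_Q(\Zp)$ of $\Lambda$ acts transitively on non-backtracking paths of length $n$ starting at $\Lambda$, so one may assume the segment is the standard one $\Lambda'_{\pm m}=\Zp p^{\mp m}e_1+\Zp p^{\pm m}e_2+\Zp e_3$ in a basis with $Q=xy+z^2$ (or a split basis when $p\nmid\disc Q$), where $\bfv=e_1$, $\bfw=e_3$ and every assertion of the proposition is an immediate computation.
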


\subsection{Proof of Lemma \ref{padic}}
Let $p>2$ and $Q$ be as in the lemma. Define
$$\mcR(L):=\{\Lambda\in\mcT_Q,\ L\subset \Lambda\}\subset\mcT_{Q},\ N(L)=|\mcR(L)|.$$
In the notation of Lemma \ref{padic}, $N(L)=N(L;\Lambda)$  for any $\Lambda\in\mcT_Q$.

We start by remarking that $\mcR(L)$ is connected:
if $\Lambda, \Lambda'$ both contain $L$, then $L \subset \Lambda \cap \Lambda' \subset \Lambda''$ for any $\Lambda''$ on the geodesic path between $\Lambda$ and $\Lambda'$.

Let $q$ be as in \eqref{A-0}. Suppose $\mcR(L)$ is non-empty and let $\iota:(\Zp^2,q)\hookrightarrow (\Lambda,Q)$ be an isometric embedding with image
$L=\iota(\Zp^2)$ and let $e_1=\iota(1,0),\ e_2=\iota(0,1)$ so $$Q(e_1)=up^a,\ Q(e_2)=vp^b,\ \peter{e_1,e_2}=0.$$

\subsubsection{The case $(a,b)=(0,0)$}
We show $\mcR(L) = \{\Lambda\}$. If not, $L$ is also contained in a neighbor $\Lambda_{\obfv}$
of $\Lambda$. However, the induced quadratic form on the span of $\bar{e}_1, \bar{e}_2$
is nondegenerate, so this span cannot be $\obfv^{\perp}$ for an isotropic $\obfv \in \Lambda/p\Lambda$.
 So $N(L)=1$.

\subsubsection{The case $a=0$, $b\geq 1$} \label{case-p-0-1}
Suppose that $N(L)>1$. Then there is an isotropic $\obfv$ so that $\ov{e_1}$ belongs to $\obfv^\perp$. This shows that $e_1^\perp$ is a hyperbolic plane (first modulo $p$, and then since $p\neq 2$ also on $\Q_p^3$).

%So suppose now that the orthogonal complement $e_1^{\perp}$ is a hyperbolic plane.
%  -- if $w \in \Qp^3$ has $Q(w) = 0$, then $w+\lambda e_1 \perp e_1$ for appropriate $\lambda$.
%Suppose that $u$ is not a square, then again $e_{1}$ cannot be contained in any $\Lambda_{\obfv}$ since $Q(\modu p)$ take only square values in $\obfv^\perp$ ($p$ is odd here) so $N(L)=1$. Suppose $u$ is a square, then we may assume that $u=1$. Then $e_{1}^\perp$ is an hyperbolic plane whose

In other words,
$e_1^{\perp} \cap \Lambda$ is a rank two lattice generated by two isotropic vectors $\bfv,\bfv'$ (which are liftings of isotropic vectors generating $\ov{e_{1}}^\perp$) and then, there are exactly two neighboring lattices containing ${e_{1}}$, namely $\Lambda_{\obfv}$ and $\Lambda_{\obfv'}$; that there are at most two follows from Fact \refs{threeintersec}. Pursuing this reasoning, we see that the only lattices that can contain $e_{1}$ are the lattices
$$\Lambda_{n}:= \Zp p^{-n}\bfv+\Zp e_{1}+\Zp p^{n}\bfv',\ n\in\Zz$$
(which is a  geodesic in the tree determined by $e_{1}$).

Let us now see that for $n> b$, $\Lambda_{\pm 2n}$ does not contain $e_{2}$, which will show that $N(L)\leq 4b+3$. Suppose $e_{2}\in \Lambda_{n}$, then
$$e_{2}\in \Lambda\cap \Lambda_{2n}=\Zp e_{1}+p^n\Lambda_{n}$$
write $e_{2}=\alpha e_{1}+\bfz$, $\alpha\in\Zp,\ \bfz\in p^n\Lambda_{n}$ we obtain
 $$\peter{e_{1},e_{2}}=0\equiv \alpha (\modu p^n),\ Q(e_{2})=vp^b\equiv  \alpha^2\equiv 0(\modu p^n).$$
This is a contradiction for $n>b$.

\subsubsection{The case $a=1$}
We show  $N(L)\leq 2$: Suppose that $L\subset \Lambda_{\obfv}$ for some $\obfv$.
Since $\ov e_1\in\Lambda/p\Lambda$ is a non-zero isotropic vector contained in $\obfv^\perp$
it has to be a multiple of $\obfv$.
By symmetry between $\Lambda$ and $\Lambda_\obfv$, this also shows that $\Lambda$ is the only neighbor
of $\Lambda_\obfv$ which contains $L$. Since $\mcR(L)$ is a connected subset of the tree, this shows that $N(L)\leq 2$ as claimed.

%Now $e_{1}$ is not contained in any neighbor of $\Lambda_{\obfv}$ other than $\Lambda$ itself: otherwise, $e_{1}$ would be contained in $\Zp\bfw+p\Lambda_{\obfv}$ for some $\bfw\in\Lambda$ non-isotropic; writing $$e_{1}=\alpha\bfw+p\bfz,\ \alpha\in\Zp,\ \bfz\in\Lambda_{\obfv}$$ we obtain that
%$$Q(e_{1})=up\equiv \alpha^2 (\modu p)$$ so $\alpha\equiv 0(p)$ and hence $e_{1}\in p\Lambda_{\obfv}$ contradicting the fact that the $p$-adic valuation of $Q(e_{1})$ is exactly $1$.

\subsubsection{The case $a\geq 2$}\label{padic>=2}
%We proceed by recurrence on $a+b$: the case $a+b\leq 2$ has been treated.
Let $$L_1:=\Zp e'_1+\Zp e_2,\ L_2:=\Zp e_1+\Zp e'_2,\ e'_i=e_i/p,\ i=1,2,\ L_{1}+L_{2}=\frac1pL$$
these are rank $2$ lattices containing $L$, on which $Q$ is $\Zp$-valued with respective invariants  $(a-2,b)$,
$(a,b-2)$ and $(a-2,b-2)$. We will show that either $N(L)=1$ or
\begin{equation}\label{inclusion}
\mcR(L)\subset \mcR(L_1)\cup \mcR(L_2)\cup \bigcup_{\Lambda'\in\mcR(\frac{1}pL)}B(\Lambda',1)
\end{equation}
where
$B(\Lambda',d)=\{\Lambda''\in\mcT_{Q},\ d(\Lambda',\Lambda'')\leq d\}$
is the ball in the tree of radius $d$ centered at $\Lambda'$; it has
cardinality $1+(p+1)\frac{p^d-1}{p-1}\leq (1+\frac{3}p)p^{d}.$

Here is the proof of \eqref{inclusion}. Let $\Lambda\in\mcR(L)$. If $e_1\in p\Lambda$ or $e_2\in p\Lambda$, then $\Lambda\in\mcR(L_1)\cup \mcR(L_2)$. So suppose now $e_1,e_2\in\Lambda$ are both primitive vectors. By assumption, we have for $i=1,2$ (since $Q(e_i)\equiv 0(\modu p)$)
that $\ov{e_i}$ is a non-zero isotropic vector.
Since $\peter{e_1, e_2} = 0$, $\ov{e_1} $ and $ \ov{e_2}$ have to be co-linear; otherwise
the induced form on the reduction $\ov{\Lambda}$ would be identically zero on a plane.
Now $\Lambda_{\ov e_1}$ contains both $L_1$ and $L_2$;
so it belongs to $\mathcal{R}(\frac1pL)$. Thus $\Lambda$ is at distance
at most $1$ from $\mathcal{R}(\frac1pL)$.
%
%Let $\mathbf{v}_i \in \Lambda$ be an isotropic vector lifting $\ov{e_i}$. {\bf AV: Philippe, please check this is what you meant by $\mathbf{v}_i$!} and $e_i\subset \Lambda_{\ov{e_i}}=\Lambda_{\obfv_i}$; moreover, since $Q(e_i)\equiv 0(p^2)$ we see that $e_i$ can be written
%$$e_i=\alpha_i\bfv_i+p\beta_i\bfw_i+p^2\gamma_I\bfv_i',\ \alpha_i\in\Zp^\times,\ \beta_i,\gamma_i\in\Zp;$$
%so the only neighbor of $\Lambda$ that can contain $e_i$ is $\Lambda_{\obfv_i}$ and moreover
%$e'_i\in \Lambda_{\obfv_i}$.

%
%If $\obfv_2$ is not colinear to $\obfv_1$, we are done: $e_1$ and $e_2$ cannot be both contained in
%$\Lambda_{\obfv_1}$ (or $\Lambda_{\obfv_2}$) so $N(L)=1$.
%Otherwise $\Lambda_{\obfv_1}=\Lambda_{\obfv_2}$ contain both $L_1$ and $L_2$ so, $\Lambda_{\obfv_1}\in\mcR(L/p)$ and $\Lambda\in B(\Lambda_{\obfv_1},1)$.

%Since the size of that sphere is $p+1$ we obtain
%$$N(L)\leq N(L_1)+N(L_2)+(p+1)N(L/p)$$
%and we conclude by recurrence.
Let us now see how to conclude the proof of Lemma \ref{padic}: for $r,s\in\mathbb{N}$, let
$$L_{r,s}:=\Z_p p^{-r}e_1+\Z_p p^{-s}e_2.$$
$Q$ takes integral values on $L_{r,s}$ for $r\leq\lfloor a /2\rfloor$, $s\leq\lfloor b/2\rfloor$.
In this notation \refs{inclusion} states
\[
 \mcR(L_{0,0})\subset \mcR(L_{1,0})\cup \mcR(L_{0,1})\cup \bigcup_{\Lambda'\in\mcR(L_{1,1})}B(\Lambda',1)
\]
We can now apply \refs{inclusion}
again to each of the terms on the right. With each application $r$ or $s$ or both increase by $1$.
In the latter case we obtain that the previous lattice $\Lambda'\in\mcR(L_{r,s})$ (to which \refs{inclusion} was applied) is at distance 1 from the new lattice $\Lambda''\in\mcR(L_{r+1,s+1})$. Also note that in the latter case both $a$ and $b$ are reduced by $2$, so that this case can only happen $\leq\lfloor a/2\rfloor$ many times.
Therefore, induction on $a+b$ shows that
\[
 \mcR(L)=\mcR(L_{0,0})\subset \bigcup\bigl\{B(L_{\lfloor a/2\rfloor,s}, \lfloor a/2\rfloor),B(L_{r,\lfloor b/2\rfloor},\lfloor a/2\rfloor) :
  \ 0\leq r,s\leq \lfloor b/2\rfloor\bigr\}.
\]
Each $L'=L_{\lfloor a/2\rfloor,s}$ resp.\ $L'=L_{r,\lfloor b/2\rfloor}$ has invariants $(0,b')$ or $(1,b')$ with $b'\leq b$ and by the previous sections $N(L')=O(b+1)$ in all cases.
Consequently
$$N(L)\ll\sum_{L'}\sum_{\Lambda'\in\mcR(L')}|B(\Lambda',\lfloor a/2\rfloor)|\ll(b+1)^2 p^{[a/2]}.$$

\qed

\subsection{ Proof of Lemma \ref{2adic}}
Recall that we assume that $Q(x,y,z)=xy+z^2$. Note that $(1,0,0),(0,1,0)$ and $(-1,1,1)$ are three isotropic vectors that are linearly independent modulo $2$, which define the neighbors of $\Z_2^3$. For every pair $f_1,f_2$ of these vectors we can find a third vector $f_3\in\Z_2^3$ so that $Q(xf_1+yf_2+zf_3)=xy+z^2$. Of the four non-zero non-isotropic vectors modulo $2$ the vector $k=(0,0,1)$ is special, it is the only element in the kernel of $\langle\cdot,\cdot\rangle$ modulo 2 and also satisfies $k\equiv f_3$ modulo $2$ for any basis $(f_1,f_2,f_3)$ as above. Below we will always use the letter $\ov{k}$ to denote the corresponding element in the lattice $\Lambda/2\Lambda$.

\subsubsection{The diagonal case \eqref{2adicform1}}
Suppose that in a suitable basis $q$ takes the form \refs{2adicform1}. This situation is similar to the proof of Lemma \ref{padic}. We only discuss the details where the two proofs differ.

\subsubsection{The case $(a,b)=(0,0)$}
We claim that $\Lambda\in\mcR(L)$ has at most one neighbor in $\mcR(L)$.
%There are two subcases depending on whether $\ov{e_1}$ and $\ov{e_2}$ are colinear or not.
If one of $\ov{e_1}$ or $\ov{e_2}$ is not equal to $\ov{k}$, then we claim that $\mcR(L)$ contains
at most one neighbor of $\Lambda$. To see this suppose $\ov{e_1}\neq\ov{k}$
and $L\subset\Lambda_\obfv\cap\Lambda_{\obfv'}$. Then by Fact (2) $L$ is contained
modulo $2$ in the common kernel of $\peter{\cdot,\ov{v}}$ and $\peter{\cdot,\ov{v}'}$, which is
one-dimensional and actually equal to the span of $\ov{k}$ --- a contradiction. Therefore, $L\subset\Lambda\cap\Lambda_\obfv$
for at most one neighbhor $\Lambda_\obfv$ as claimed.

So suppose $\ov{e_1}=\ov{e_2}=\ov{k}$ and $w\in\Lambda$ is such that $Q(xe_1+y(e_1+2w))=ux^2+vy^2$ as in \eqref{2adicform1}. Since we also have
\[
 Q(xe_1+y(e_1+2w))=x^2Q(e_1)+y^2Q(e_1+2w)+xy(2Q(e_1)+2\langle e_1,w\rangle)
\]
and $2\ |\ \langle e_1,w\rangle$, it follows that $Q(xe_1+y(e_1+2w))$ is not as in \eqref{2adicform1}.
So we have seen that in all possible cases we have at most one neighbor of $\Lambda$ in $\mcR(L)$. However, this shows $N(L)\leq 2$ for $(a,b)=(0,0)$.

\subsubsection{The case $a=0$ and $b\geq 1$}
We claim that the main difference between the case of $p=2$ and $p>2$ lies in this case. Here we will see that $\mcR(L)$
is only contained in the set of elements of distance one to points on a geodesic. This is caused by the fact that if $\ov{e_1}= \ov{k}$ and $\ov{e_2}=0$,
then $\mcR(L)$ contains all neighbors of $\Lambda$ due to Fact (1) and since $\ov{k}$ is orthogonal to all three nonzero isotropic vectors in $\Lambda/2\Lambda$.

On the other hand, we have already seen above (in the case $a=0,b=0$) that if $\ov{e_1}\neq\ov{k}$ then only one neighbor
of $\Lambda$ can be in $\mcR(L)$. To prove that $\mcR(L)$ consists of points of distance one from a geodesic we only have to show that if $\ov{e_1}=\ov{k}$, then for
at least one neighbor $\Lambda'$ of $\Lambda$ we have $\ov{e_1}\neq\ov{k'}$ where $\ov{k'}\in\Lambda'/2\Lambda'$ is the corresponding special vector for $\Lambda'$. This follows if we can find some vector $w\in\Lambda'$ with $\langle\ov{e_1},\ov{w}\rangle\neq 0$.

To see this we simplify the notation and assume without loss of generality $\Lambda=\Z_2^3$. Let $e_1=(\alpha,\beta,\gamma)$ so that $\langle e_1,(1,0,0)\rangle=\beta$, $\langle e_1,(0,1,0)\rangle=\alpha$, and $\langle e_1,(0,0,1)\rangle=2\gamma$. Since $\ov{e_1}\neq 0$, one quickly sees that one of these inner products is not divisible by $4$. Without loss of generality we may assume $4\ndiv\beta$. Now consider the neighbor $\Lambda'=\frac12\Z_2\times 2\Z_2\times\Z_2$ of $\Lambda$. Then $w=(\frac12,0,0)\in\Lambda'$ satisfies $\langle e_1,w\rangle=\frac12\beta\not\equiv 0$ (mod $2$). Hence as claimed, $\ov{e_1}\neq\ov{k'}$ and so only one neighbor of $\Lambda'$, namely
$\Lambda$ itself, can belong to $\mcR(L)$.

It follows that there exists a line segment $I\subset\mcR(L)$ in a geodesic in $\mcT(Q)$ so that $\mcR(L)\subset\bigcup_{\Lambda\in I}B(\Lambda,1)$.
Arguing as in Subsection \ref{case-p-0-1}
we can bound the length of $I$ in terms of $b$ and obtain $N(L)\leq 3(4b+3)$.

\subsubsection{The case $a\geq 1$} The arguments for $p>2$ carry over to the remaining cases.

%then much of the arguments of the previous section carry over to the $p=2$ case.
%Indeed the only place we have used that $p$ is odd is for $(a=0,b\geq 1)$ and $u$ not a square. In the present case, if $u$ is not a square then $u(\modu 4)$ is not a square and arguing as before we see that $e_1$ cannot be contained in any lattice at distance $>2$ from $\Lambda$ so $N(L)\leq |B(\Lambda,2)|\leq 4.$

\subsubsection{The non-diagonal case \eqref{2adicform2}}
So supppose now $q$ is represented by the lattice $L=\Z_2 e_1+\Z_2 e_2\subset\Lambda$ with
$$Q(e_1)=u2^b,\ Q(e_2)=v2^b,\ \peter{e_1,e_2}=w2^a,\ u,v,w\in\Z_2^\times,\ 0\leq a\leq b.$$

\subsubsection{The case $a=0$} If $(a,b)=(0,0)$, then $\ov{e_{1}}$ and $\ov{e_{2}}$ are linearly independent in $\Lambda/2\Lambda$ since
otherwise $w=\peter{e_{1},e_{2}}\equiv 0$ (mod $2$). Also note that the plane generated by $\ov{e_{1}}$ and $\ov{e_{2}}$ does not contain any isotropic vector.
However, this implies that $e_{1},e_{2}$ cannot be both contained in any $\Lambda_{\obfv}$
for then $\obfv^{\perp}$ would contain $\ov{e_{1}}, \ov{e_{2}},\obfv$ three linearly independent vectors.

If now $(a,b)=(0,b\geq 1)$,  $\ov{e_{1}}$ and $\ov{e_{2}}$ are two linearly independent isotropic vectors and so $e_{1}$ can only be contained in $\Lambda_{\ov{e_{1}}}$. Similarly, $e_{2}$ is only contained in   $\Lambda_{\ov{e_{2}}}$.  So $L$ cannot be contained in any neighbor of $\Lambda$.

In conclusion for $a=0$ we have
$$N(L)=1.$$

\subsubsection{The case $a=1$}
In that case at least one of the vectors $\ov{e_{1}}$ and $\ov{e_{2}}$ must be a non-zero isotropic vector,
for otherwise $a\geq 2$. Suppose $\ov{e_1}\neq 0$. Then $\ov{e_1}\in\Lambda_\obfv$ only for $\ov{e_1}=\obfv$.
Therefore, $L$ can only have one neighbor in $\mcR(L)$ and so $N(L)\leq 2$.

%In that case $\ov{e_{1}}$ and $\ov{e_{2}}$ are non-zero (otherwise $a>1$) isotropic vectors. If they are linearly independent, then no $\Lambda_{\obfv}$ can contain $L$ and $N(L)=1$; otherwise, the only possible neighbor is $\Lambda_{\ov{e_{1}}} =\Lambda_{\ov{e_{2}}}=\Lambda_{\obfv}$. We may suppose $\bfv=(1,0,0)$, $\bfv'=(0,1,0)$ and $\bfw=(0,0,1)$, thus $$e_{i}=\alpha_{i}\bfv+2\gamma_{i}\bfv'+2\beta_{i}\bfw,\ \alpha_{i},\ \beta_{i},\ \gamma_{i}\in\Z_{2},\ \alpha_{i}\in\Z_{2}^{\times}$$ and since  $$\peter{e_1,e_2}=2w=8\beta_{1}\beta_{2}+2(\alpha_{1}\gamma_{2}+\alpha_{2}\gamma_{1})$$
%either $\gamma_{1}$ or $\gamma_{2}$ is in $\Z_2^{\times}$. This is not possible if $b\geq 2$ so $N(L)=1$ in that case. If $b=1$, then $\gamma_{1},\ \gamma_{2}\in \Z_2^{\times}$. Writing $\bfv_{1}=\bfv/2,\ \bfv'_{1}=2\bfv'$, we have $$e_{i}=2\alpha_{i}\bfv_{1}+2\beta_{i}\bfw+\gamma_{i}\bfv_{1}'$$ and we see that the only neighbor of $\Lambda_{\bfv}$ that could contain $e_{1}$ and $e_{2}$ is $(\Lambda_\bfv)_{\bfv'_{1}}=\Lambda$. To summarize, for $a=1$ we have
% $$N(L)\leq 2.$$

\subsubsection{The case $a\geq 2$} We consider again the two rank $2$ lattices
$$L_1:=\Z_2 e'_1+\Z_2 e_2,\ L_2:=\Z_2 e_1+\Z_2 e'_2,\ e'_i=e_i/2$$
which contain $L$ and on which $Q$ is $\Z_2$-valued:
$$Q(e'_1)=u2^{b-2},\ Q(e'_2)=v2^{b-2},\ \peter{e'_1,e_2}= \peter{e_1,e'_2}=w2^{a-1}.$$

 We describe now the type and the invariants of $L_1$ --- by symmetry $L_2$ behaves the same way.

If $a=b$ we may solve the equation in $\beta\in\Z_2^\times$
$$0=\peter{e_2+\beta e'_1,e'_1}=w2^{a-1}+\beta u2^{b-1}$$
and so $Q_{|L_1}$ is of diagonal form \eqref{2adicform1} in the basis $\{e_2+\beta e'_1,e'_1\}$. Furthermore, since
 $$\peter{e_2+\beta e'_1,e_2+\beta e'_1}=2Q(e_2+\beta e'_1)=v2^{b+1}+\beta w2^{a-1}$$
it has invariants $(a-2,b-2)$.

If $a<b$, take $\beta=2^{b-a}$: in the basis $\{e_2+\beta e'_1,e'_1\}$, $Q_{|L_1}$ takes the non-diagonal form \refs{2adicform2} with $(a',b')=(a-1,b-2)$. Finally $Q_{|L_{1}+L_{2}}=Q_{|L/2}$ takes the form \refs{2adicform2} with $(a'',b'')=(a-2, b-2)$.

We then conclude exactly as in \S \ref{padic>=2} by proving that either $N(L)=1$ or
 \refs{inclusion} holds. This implies once more the desired bound.
%and yield either $N(L)=1$ or the inclusion
%$$\mcR(L)\subset \mcR(L_1)\cup \mcR(L_2)\cup \bigcup_{\Lambda'\in\mcR(L/2)}B(\Lambda',1).$$

\subsection{Proof of Proposition \ref{2-3reps}}\label{appaproof}

We now show how the previous subsections combine to the proof of Proposition \ref{2-3reps}.

Recall that we are bounding the number of representations $N(L)$ of the quadratic form $q(x,y)=a_1x^2+a_2xy+a_3y^2$ by the ternary quadratic form $Q$ up to $SO_Q(\Z)$.
For any $p$ let us write $a_p$ and $b_p$ for the invariants of $q$ over $\Z_p$ as in Section~\ref{unramified-local}.  Let $f^2|\gcd(a_1,a_2,a_3)$ be the greatest common square divisor of the coefficients of $q$. Then $a=v_p(f)$.

By the discussion in Section \ref{counting-local}-\ref{unramified-reduction}
we know that
\[
 N(L)\ll \prod_\stacksum{p}{Q\ p-\mathrm{isotropic}} N(L_{p}).
\]
Also recall from Section \ref{unramified-reduction} that for
bounding $N(L_p)$ for $p|\disc(Q)$ we may replace $Q$ by $xy+z^2$ and $q$ by a fixed multiple $q'$ of $q$, where the factor only depends on $Q$.  From this we see that Lemma \ref{padic}-\ref{2adic} also hold for $p|\disc(Q)$ for $q$ and $Q$, except that the implicit constant depends for those primes also on $Q$.

Notice that for any prime $p>2$ we have $a_p+b_p=v_p(\disc(q))$ and $a_p=v_p(\gcd(a_1,a_2,a_3))$.
For $p=2$ we have $v_2(\disc(q))=a+b+2$ in the diagonal case and $v_2(\disc(q))=2a$ in the non-diagonal case.
Also let $c\geq 1$ be the implied constant in Lemmas \ref{padic}.
Together with Lemma \ref{padic}-\ref{2adic} this gives
\[
 N(L)\ll \prod_{p|2\disc(q)} c (v_p(\disc(q))+1)^2  p^{v_p(f)}\ll_\epsilon
 f \max(a_1,a_2,a_3)^\epsilon,
\]
as desired.

\section{Entropy, Bowen balls and uniqueness of measure of maximal entropy}\label{Appendix-B}
\subsection{Statement of main results}
We recall some notations: we work in the space $X=\Gamma\backslash G$ with $G=\SL_2(\R)$, and let $T$
 denote the time-one-map of the geodesic flow, i.e.\ the map \[
 T:x\mapsto xa \qquad\text{with }a = \begin{pmatrix}e^{1/2}&0\\0&e^{-1/2}\end{pmatrix}.
 \]
 We define a Bowen $(N,\eta)$-ball in this space to be any set of the form $xB_{N,\eta}$ with $x\in X$ and
\[
  B_{N,\eta}=\bigcap_{n=-N}^N a^{-n}
  B_\eta^{G}(e)a^n
\]
(in the sections above $\eta$ remained fixed and was omitted from the notations, but here it will be convenient to be able to use Bowen balls of varying $\eta$).

If $\Gamma$ is cocompact, for all $\eta$ sufficiently small, the Bowen $(N, \eta)$-ball $xB_{N, \eta}$ coincides with the set
\begin{equation*}
xB_{N, \eta} = \left\{ y: d (T^n(x), T^n(y))< \eta \quad \text{for all $-N \leq n \leq N$} \right\}
.\end{equation*}
This is \textbf{not} true any more for noncompact quotients, where in general the right hand side can be significantly bigger than the left hand side which is the source of some complications.

The following theorem was proved for compact quotients by Bowen in \cite{Bowen}. It is certainly well known also in the finite volume case, and proofs using leafwise measures can be found e.g. \cite[Prop.~9.6]{Margulis-Tomanov} and the more recent lecture notes \cite[Thm.~7.9]{Pisa-notes}).

\begin{Theorem} \label{max-entropy}
Let $X=\Gamma \backslash \SL_2(\R)$ and $T: X \to X$ be as above.  Then for any $T$-invariant probability measure $\nu$ the entropy satisfies
$h_\nu(T)\leq 1$. Moreover, equality holds if and only if $\nu=\mu_X$ is the $\SL_2(\R)$-invariant
probability measure on~$X$.
\end{Theorem}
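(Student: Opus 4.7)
My plan has two parts: the upper bound $h_\nu(T) \leq 1$, and the characterization of the equality case. For the upper bound I would invoke Lemma~\ref{cover-entropy-main} to reduce matters to a covering estimate on $BC(N, \epsilon)$. Given $\epsilon > 0$, since $\nu$ is a probability measure we may choose $M$ large enough that $\nu(X_{\leq M}) \geq 1 - \epsilon$. The set $X_{\leq M}$ has compact closure in $X$ and bounded Haar volume. A two-sided Bowen $(N, \eta)$-ball has the shape of a tube of width $\eta e^{-N}$ in each of the stable and unstable directions $U^\pm$ and of width $\eta$ in the flow direction $A$, hence Haar volume $\asymp \eta^{3} e^{-2N}$. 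A routine packing argument then covers $X_{\leq M}$ by $\ll_{M, \eta} e^{2N}$ such balls, giving $BC(N, \epsilon) \leq C_{\epsilon, \eta} e^{2N}$. Plugging this into Lemma~\ref{cover-entropy-main} yields $h_\nu(T) \leq 1$.

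For the equality case, I would first reduce to the ergodic case by ergodic decomposition: since entropy is affine in $\nu$ and bounded above by $1$, having $h_\nu(T) = 1$ forces $\nu$-a.e.\ ergodic component to have entropy $1$, so it suffices to treat ergodic measures. Suppose then $\nu$ is ergodic with $h_\nu(T) = 1$. The core is to extract from this an invariance property, namely that $\nu$ is invariant under the unstable horocycle subgroup $U^+$. The natural framework is the theory of leafwise measures $\nu_x^{U^+}$ along the $U^+$-foliation, together with a one-dimensional Ledrappier--Young identity relating $h_\nu(T)$ to the transverse dimension of $\nu$ along $U^+$: writing $\chi^u = 1$ for the unstable Lyapunov exponent, one has $h_\nu(T) \leq \chi^u \cdot \dim \nu_x^{U^+} \leq 1$, and equality forces $\nu_x^{U^+}$ to be locally Haar on $U^+$, which is equivalent to $\nu$ being $U^+$-invariant. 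A clean way to prove this within the appendix, without importing the general machinery, is a direct Bowen-ball argument: if $h_\nu(T) = 1$, then by a Brin--Katok style calculation the mass $\nu(xB_{N, \eta})$ of two-sided Bowen balls decays at the maximal rate $e^{-2N}$ for $\nu$-a.e.\ $x$, and combined with the uniform rate-$e$ expansion of $U^+$ by $a$ this forces the conditional of $\nu$ on each local unstable manifold to be proportional to Haar.

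Once $U^+$-invariance is established, applying the same reasoning to $T^{-1}$ (whose expanding direction is $U^-$) and using $h_\nu(T^{-1}) = h_\nu(T) = 1$ gives $U^-$-invariance of $\nu$ as well. Since $A$, $U^+$, $U^-$ generate $\SL_2(\R)$ and $\nu$ is $A$-invariant by hypothesis, $\nu$ is $\SL_2(\R)$-invariant; because $\SL_2(\R)$ acts transitively on $X = \Gamma \bash \SL_2(\R)$, the only $\SL_2(\R)$-invariant probability measure is $\mu_X$, and we conclude $\nu = \mu_X$. The main technical obstacle is establishing the entropy identity / Brin--Katok estimate in the noncompact finite-volume setting, where orbits can escape into the cusp; however, because the unstable expansion rate is uniform on $X$, the argument is local along unstable leaves and carries over from the cocompact case, provided one controls the integrability of the log-mass of leafwise measures, which itself is guaranteed by the bound $h_\nu(T) < \infty$ already established.
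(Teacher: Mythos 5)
Your argument is correct in outline, but it is a genuinely different proof from the one in the paper. For the inequality $h_\nu(T)\leq 1$ you do essentially what the paper does (volume-packing of $X_{\leq M}$ by $\ll_{M,\eta}e^{2N}$ injective Bowen balls plus Lemma~\ref{cover-entropy}), so that part matches. For the equality case, however, you invoke the leafwise-measure/Ledrappier--Young route: maximal entropy forces the conditionals of $\nu$ along $U^+$ to be Haar, hence $U^+$-invariance, then $U^-$-invariance via $T^{-1}$, then $\SL_2(\R)$-invariance and uniqueness of Haar measure. This is precisely the ``known'' proof the paper cites (Margulis--Tomanov, and the Pisa lecture notes) and deliberately avoids: the appendix instead runs a Bowen/Adler--Weiss style argument, taking $\nu$ singular with respect to $\mu_X$, choosing a test function and a compact set $Y_T$ with $\nu(Y_T)>1-\epsilon$ but $\mu_X(Y_T)<\epsilon$ (pointwise ergodic theorem for $\mu_X$), covering $Y_T$ by $\ll\epsilon e^{2N}$ Bowen balls by a Haar-volume count, and then using the bootstrapping Lemma~\ref{cover a little cover a lot} to convert this gain in the constant into a gain in the exponent at time $kN$, so that Lemma~\ref{cover-entropy} yields $h_\nu(T)<1$. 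What your route buys is conceptual clarity and extra structural information (maximal entropy implies horospherical invariance, a statement that generalizes well beyond $\SL_2$); what the paper's route buys is self-containedness -- it uses only the two covering lemmas that are in any case needed for Theorem~\ref{Measure} and Theorem~\ref{entropy-cusp}, and it sidesteps conditional/leafwise measure theory entirely, which was the stated purpose of the appendix. Also note a small slip: the hypothesis gives invariance only under the time-one map $T$, not under all of $A$; this is harmless since $U^+$ and $U^-$ already generate $\SL_2(\R)$ and the cited equality criterion only needs invariance under the single element $a$.

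One caveat on your proposed shortcut: the claim that a Brin--Katok computation (``$\nu(xB_{N,\eta})$ decays at rate $e^{-2N}$ a.e.'') ``combined with the uniform rate-$e$ expansion of $U^+$ forces the conditional of $\nu$ on each local unstable manifold to be proportional to Haar'' is not a proof as stated -- passing from the maximal decay rate of Bowen-ball masses to Haar leafwise measures is exactly the hard equality analysis in the references, and in the noncompact quotient the identification of $xB_{N,\eta}$ with the dynamical ball, and hence the Brin--Katok statement itself, already fails without extra care (this is why the appendix proves Lemma~\ref{cover-entropy} directly rather than quoting Brin--Katok). So if you keep the leafwise-measure framework and cite the known theorem, your proof is fine but not self-contained; if you want a self-contained argument, that sketch needs to be replaced by real work, for instance by the covering and bootstrapping argument of the paper.
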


 We give here a direct proof not using leafwise measures, based on Lemma~\ref{cover-entropy} (which is identical to Lemma~\ref{cover-entropy-main} and was needed for the proofs in \S\ref{ergodicsection}), in the spirit of Bowen's proof (that in itself was inspired by a proof by Adler and Weiss \cite{Adler-Weiss} of the uniqueness of measure of maximal entropy in irreducible shifts of finite type).

\begin{Lemma}\label{cover-entropy}
Let $\mu$ be an $A$-invariant measure on $X=\Gamma \backslash \SL(2,\R)$. Fix $\eta > 0$ and $\epsilon\in(0,1)$. For any $N \geq 1$ we let $BC_\eta(N,\epsilon)$ be the minimal
number of Bowen $(N, \eta)$-balls needed to cover any subset of $X$ of measure bigger than $1-\epsilon$.
Then
\begin{equation}\label{cover entropy equation}
h_\mu(T)\leq \lim_{\epsilon \rightarrow 0}\liminf_{N \rightarrow \infty}\frac{\log BC_\eta(N,\epsilon)}{2N}.
\end{equation}
\end{Lemma}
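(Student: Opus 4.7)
The strategy is via the Kolmogorov--Sinai formula $h_\mu(T) = \sup_\Pcr h_\mu(T, \Pcr)$ (supremum over finite measurable partitions), combined with a direct comparison between atoms of the refined partition $\Pcr^{[-N, N]} := \bigvee_{n=-N}^N T^{-n}\Pcr$ and Bowen $(N, \eta)$-balls. It suffices to bound $h_\mu(T, \Pcr)$ by the claimed right-hand side for a cofinal family of partitions $\Pcr$.

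Given $\epsilon' > 0$, I would first construct a finite partition $\Pcr = \{Q, P_1, \ldots, P_r\}$ of $X$ with $\mu(\partial P) = 0$ for each atom, with $P_1, \ldots, P_r$ contained in a compact set $K \subset X$ of injectivity radius $\geq 2\eta$ and of diameter less than $\eta/4$, and with the ``boundary layer'' $B := Q \cup \bigcup_i \{x : d(x, \partial P_i) < \eta\}$ satisfying $\mu(B) < \epsilon'$. This can be arranged using inner regularity of $\mu$ together with a thickening argument for the boundaries, for instance by averaging boundaries over small translates of a net in $K$. By $T$-invariance and Markov's inequality applied to the Birkhoff average of $\mathbf{1}_B$, the set $G_N$ of points $y$ with $|\{n \in [-N, N] : T^n y \in B\}| < \sqrt{\epsilon'}(2N+1)$ has $\mu(G_N) \geq 1 - \sqrt{\epsilon'}$.

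For fixed $N$, let $F_N$ be a measurable set of measure $\geq 1-\epsilon$ covered by $k_N := BC_\eta(N, \epsilon)$ Bowen balls. The key geometric observation is that if $y_1, y_2$ both lie in a common Bowen ball $x_j B_{N, \eta}$ and $T^n y_1 \notin B$, then $T^n y_2$ lies in the same atom of $\Pcr$ as $T^n y_1$: indeed, $T^n y_1$ sits in the $\eta$-interior of its atom while $d(T^n y_1, T^n y_2) < 2\eta$, and the injectivity-radius condition on $K$ ensures the inequality holds in the quotient. Consequently, each Bowen ball intersects at most $(r+1)^{2\sqrt{\epsilon'}(2N+1)}$ distinct atoms of $\Pcr^{[-N, N]}$ through points of $G_N$, so at most $k_N (r+1)^{2\sqrt{\epsilon'}(2N+1)}$ atoms of $\Pcr^{[-N, N]}$ meet $F_N \cap G_N$. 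Combining the trivial bound $H_\mu(\mathcal{Q}) \leq \log |\mathcal{Q}|$ applied separately to the restriction of $\Pcr^{[-N,N]}$ to $F_N \cap G_N$ and to its complement, together with subadditivity of entropy (applied to the refinement by $\{F_N \cap G_N, X \setminus (F_N \cap G_N)\}$), yields
$$
\frac{H_\mu(\Pcr^{[-N, N]})}{2N+1} \leq \frac{\log k_N}{2N+1} + (3\sqrt{\epsilon'} + \epsilon)\log(r+1) + o(1).
$$

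Taking $\liminf_N$, then sending first $\epsilon \to 0$ and subsequently $\epsilon' \to 0$ with $r = r(\epsilon')$ growing only polynomially in $1/\epsilon'$ (so that $\sqrt{\epsilon'}\log(r+1) \to 0$), gives $h_\mu(T, \Pcr) \leq \lim_{\epsilon \to 0}\liminf_N \frac{\log BC_\eta(N, \epsilon)}{2N}$ for each partition in the cofinal family; taking supremum over $\Pcr$ via the Kolmogorov--Sinai formula concludes the proof. The main obstacle is the noncompactness of $X$: one must coordinate the cusp atom $Q$ (whose measure is controlled via inner regularity since $\mu$ is a probability measure) with the partition boundaries, and then verify that the Birkhoff-average control of the boundary layer remains effective despite the recurrence of trajectories near the cusp -- a difficulty that in the cocompact setting simply does not arise and explains why Brin--Katok applies there without modification.
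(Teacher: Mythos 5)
Your bookkeeping (the itinerary count inside a single Bowen ball, the conditional-entropy split along $\{F_N\cap G_N, X\setminus(F_N\cap G_N)\}$, and the order of limits $N\to\infty$, $\epsilon\to 0$, $\epsilon'\to 0$) is sound, but the partition you start from does not exist, and this is where the proof breaks. First, as written it is internally inconsistent: if each $P_i$ has diameter $<\eta/4$, then every point of $P_i$ is within $\eta/4<\eta$ of $\partial P_i$, so your boundary layer $B$ contains all of $\bigcup_i P_i\cup Q=X$ and $\mu(B)=1$, not $<\epsilon'$. Second, and more seriously, the inconsistency cannot be repaired while keeping the comparison scale equal to the fixed $\eta$ of the Bowen balls: your geometric observation forces the boundary layer to have thickness comparable to $\eta$ (indeed at least $2\eta$, since two points of a common $(N,\eta)$-Bowen ball are only $2\eta$-close, a separate factor-of-$2$ slip), while $\mu(B)<\epsilon'\to 0$ forces the atoms to be ``fat'' at scale $\eta$ with very sparse boundaries. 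For the measures one actually needs to handle (e.g.\ $\mu=\mu_X$, or any $\mu$ with large support and positive entropy), an expansion/Cheeger-type argument shows that if the $\eta$-neighbourhood of all atom boundaries has measure $<\epsilon'$ with $\eta$ fixed, then one atom must carry almost all the mass, whence $h_\mu(T,\Pcr)\le H_\mu(\Pcr)$ is small. So the family you invoke is not cofinal --- in fact ``$\mu(B)<\epsilon'$'' and ``$h_\mu(T,\Pcr)$ close to $h_\mu(T)$'' are mutually exclusive once $\epsilon'$ is small --- and the appeal to Kolmogorov--Sinai has nothing to act on. (The cofinality of any fixed-scale family is in any case a nontrivial point, related to tail entropy, which the proposal does not address.)

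The paper's proof circumvents exactly this tension by decoupling the two scales. It starts from an arbitrary partition $\Pcr=\{Q,S_1,\dots,S_\ell\}$ with $h_\mu(T,\Pcr)>h_\mu(T)-\delta$ whose boundaries satisfy the quantitative regularity $\mu\bigl((\partial S_i)B_\kappa^{G}\bigr)<C\kappa$ for all $\kappa>0$, and compares the atoms of $\Pcr^{[-N,N]}$ not with $(N,\eta)$-balls but with the much smaller $(N,\eta')$-balls, $\eta'=\eta N^{-2}$: outside a bad set of measure $\ll N\cdot\eta N^{-2}=O(1/N)$ (no ergodic averaging needed, just summing the boundary-thickening over the $2N+1$ time steps), each atom of $\Pcr^{[-N,N]}$ \emph{contains} a $(N,2\eta')$-ball, so a cover of $1-\epsilon$ of the space by $BC_\eta(N,\epsilon)$ many $(N,\eta)$-balls --- each of which is covered by $\ll N^6$ many $(N,\eta')$-balls, a polynomial and hence entropy-negligible factor --- yields a union of $\le e^{2Nf}$ atoms covering $1-2\epsilon$ of the space; the remaining mass is handled by the same conditional-entropy split you use, with error $4\epsilon N\ell$. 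If you want to keep the flavour of your argument, the essential repair is this change of scale ($\eta\rightsquigarrow\eta N^{-2}$) together with the boundary regularity in $\kappa$, which makes the bad set small automatically and removes any constraint tying the partition geometry to $\eta$.
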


\medskip
It is easy to see that for any $\eta,\eta'>0$ a Bowen $(N,\eta)$-ball can be covered by $\ll 1$ Bowen $(N,\eta')$-balls. Therefore,
\begin{equation}\label{one lim}
\liminf_{N \rightarrow \infty}\log BC_\eta(N,\epsilon)/2N
\end{equation} is independent of $\eta$.
One can show that if $\mu$ is ergodic, equality holds in \eqref{cover entropy equation}, and moreover that the quantity in \eqref{one lim} is independent of $\epsilon$. If $\mu$ is not ergodic, then in general equality in \eqref{cover entropy equation} fails: in this case $h _ \mu (T)$ is the average of the entropy of the ergodic components of $\mu$ and the right hand side of \eqref{cover entropy equation} gives the essential supremum of the entropies of the ergodic components of $\mu$. We shall not need either fact (nor will we prove them), but will use the following related estimates for $\mu$ ergodic:

\begin{Lemma}\label{cover a little cover a lot}
	Assume that $\mu$ is in addition ergodic for $T$. Then
	for any sufficiently small $\eta$ (depending only on $X$)
	and for any $\epsilon \in(0, 1)$ and any large enough $N$ (depending on~$\mu,\epsilon$), for any
	$ \epsilon _ 1 \in (0,\epsilon)$,  if $k$ is sufficiently large (depending on~$\epsilon _ 1, \epsilon, N, \mu, \eta$) then
\begin{equation*}
\log BC_\eta(kN, \epsilon _ 1) \leq k (1-2 \epsilon) \log BC_\eta (N, \epsilon) + 4 \epsilon Nk +qk.
\end{equation*}
Here $q$ is some absolute constant.
\end{Lemma}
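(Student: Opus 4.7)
The plan is to decompose the time interval $[0, 2kN]$ into $k$ consecutive windows of length $2N$ and cover each orbit $y$ by specifying, for each window, a Bowen $(N,\eta)$-ball containing the window midpoint $T^{s_j}y$. First, fix an optimal cover $\mathcal{B} = \{B^1, \ldots, B^L\}$ of some $X'$ with $\mu(X') \geq 1-\epsilon$ and $L = BC_\eta(N,\epsilon)$; since each Bowen ball is bounded, $X' \subseteq X_{<H_0}$ for some $H_0$. Enlarging $H \geq H_0$ so that $\mu(X_{\geq H}) \leq \epsilon$, I cover $X_{<H}$ by $K$ Bowen $(N,\eta)$-balls with $\log K \leq 2N + O(1)$ via a standard volume argument; the implicit constant depends on $X, \eta, H$ and will be absorbed into $qk$ by choosing $N$ sufficiently large depending on $\mu, \epsilon$.

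For each shift $\tau \in \{0,\ldots,2N-1\}$, put $s_j^{(\tau)} = (2j-1)N + \tau$ for $j = 1, \ldots, k$. The central technical point is that for $\mu$-most $y$ there is a shift $\tau_y$ for which $T^{s_j^{(\tau_y)}}y \in X'$ for at least $(1-2\epsilon)k$ values of $j$. A direct application of Birkhoff to $T^{2N}$ would demand ergodicity of $T^{2N}$ which is not assumed, but summing over $\tau$ gives the identity $\sum_{\tau=0}^{2N-1} \#\{j : T^{s_j^{(\tau)}} y \notin X'\} = \#\{n \in [N,(2k+1)N) : T^n y \notin X'\}$, which is a Birkhoff sum for $T$ itself. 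Ergodicity of $T$ and Egorov's theorem bound the right side by $(1+o(1))\epsilon \cdot 2kN$ on a set $Y$ of measure $\geq 1 - \epsilon_1/2$ for $k$ large, and dividing by $2N$ followed by pigeonhole over $\tau$ produces the desired $\tau_y$ for each $y \in Y$. Applying the same argument to $1_{X_{\geq H}}$ ensures also that $T^{s_j^{(\tau_y)}}y \in X_{<H}$ for all $j$ after discarding a further small-measure subset.

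Encode each such $y$ as a pair $(\tau_y, c)$ with $c = (c_1, \ldots, c_k)$, where $c_j \in \mathcal{B}$ if $T^{s_j^{(\tau_y)}}y \in X'$ and $c_j$ lies in the $X_{<H}$-cover otherwise; at most $\lfloor 2\epsilon k\rfloor$ entries are of the latter type. Any two points sharing $(\tau, c)$ lie within $2\eta$ in the Bowen metric on each window $W_j^{(\tau)} = [s_j^{(\tau)}-N, s_j^{(\tau)}+N]$ (as both are in the common radius-$\eta$ ball $c_j$); since these windows tile $[\tau, \tau+2kN]$, the two points lie in a common Bowen $(kN, 2\eta)$-ball, itself covered by $O_\eta(1)$ Bowen $(kN, \eta)$-balls. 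The number of $(\tau, c)$ is at most $2N \cdot \binom{k}{\lfloor 2\epsilon k\rfloor} \cdot L^{(1-2\epsilon)k} \cdot K^{2\epsilon k}$; using the bounds $\binom{k}{\lfloor 2\epsilon k \rfloor} \leq 2^k$ and $\log K \leq 2N + O(1)$, its logarithm is at most $(1-2\epsilon)k\log BC_\eta(N,\epsilon) + 4\epsilon Nk + qk$, where the $qk$ term absorbs $k\log 2$, $\log(2N)$, an $O(\epsilon k)$ contribution from $\log K - 2N$, and the $O_\eta(1)$ reconstruction factor, all controllable once $k$ is taken large enough depending on $\epsilon_1, \epsilon, N, \mu, \eta$.

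The principal obstacle I expect is this time-scaling mismatch: Birkhoff for $T$ controls visits at \emph{all} times but not on the arithmetic progression of window midpoints, which would naively need the ergodicity of $T^{2N}$. The $\tau$-shift averaging trick resolves this using only the ergodicity of $T$ already assumed, at the cost of a multiplicative factor $2N$ in the count of codes, which is innocuous. A secondary complication is the non-compactness of $X$: the ``bad'' windows must be handled by a fallback cover on a bounded region $X_{<H}$, and the associated volume constant must be kept under control by taking $N$ large in comparison to $\log\mathrm{vol}(X_{<H})$ so that the bad-window contribution remains $4\epsilon Nk + O(\epsilon k)$.
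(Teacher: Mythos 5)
Your combinatorial skeleton is the same as the paper's: windows of length $2N$, the pointwise ergodic theorem combined with a pigeonhole over the $2N$ possible phases to find, for most $y$, a shift for which at least $(1-2\epsilon)k$ window midpoints land in the covered set, coding each such $y$ by the Bowen ball containing each good midpoint, and the count $2N\cdot 2^{k}\cdot BC_\eta(N,\epsilon)^{(1-2\epsilon)k}\cdot e^{4\epsilon Nk}$. The genuine gap is in the decoding step. You assert that two points sharing a code lie in a common Bowen $(kN,2\eta)$-ball because they are $2\eta$-close in the Bowen sense on each window and the windows tile the time range. This is exactly the inference the paper warns against right after defining Bowen balls: on a noncompact quotient, the set of $y'$ with $d(T^ny,T^ny')<2\eta$ for all $n$ in a range can be much larger than a Bowen ball. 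In group terms, window $j$ only produces some $h_j\in B_{N,C\eta}$ with $T^{s_j}y'=(T^{s_j}y)h_j$; to place $y'$ in a single $(kN,C\eta)$-Bowen ball around $y$ you must glue the $h_j$ into one element of $G$, and the gluing at a junction time $s_j+N$ needs the injectivity radius at $T^{s_j+N}y$ to exceed roughly $\eta$. Your code controls the height of the orbit only at window midpoints; within time $N$ of a midpoint of height $<H$ the orbit can climb to height of order $He^{N/2}$, where the injectivity radius is $\ll\eta$ and the window displacements may differ by conjugated lattice elements, so there need be no common Bowen ball at all. Coping with precisely this is the role of the paper's injective shifted sub-Bowen balls and properties (Bowen-1)--(Bowen-2), which patch window by window at an absolute cost $q$ per junction while keeping every piece inside the injective balls of the original cover (injective because they meet $X_{\leq M}$ and $N$ is large given $M$).

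Two further points would need repair even granting a patching mechanism. First, you cannot force \emph{all} window midpoints into $X_{<H}$: if $\mu(X_{\geq H})>0$, then for a.e.\ $y$ and for every phase $\tau$ a positive proportion of midpoints lies in $X_{\geq H}$; the ergodic theorem only makes this proportion small, so high midpoints must be treated as extra bad windows, and covering their possible positions again runs into the injectivity problem since the relevant region extends into the cusp. Second, your quantifiers are circular: $H\geq H_0$ is determined by the optimal cover, which depends on $N$, while the injectivity and the bound $\log K\leq 2N+O(1)$ for the fallback cover require $N$ large depending on $H$; the paper avoids this by fixing $M$ with $\mu(X_{\leq M})>1-\epsilon/2$ first and discarding from the optimal cover the balls that miss $X_{\leq M}$. (Relatedly, your per-bad-window constant from the volume argument scales like $\log(1/\eta)$, so the error term is not $qk$ with $q$ absolute as stated; this is minor for the intended application but another sign that the constants must be organized as in the paper.)
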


For our proof of Theorem~\ref{max-entropy} it is crucial that the second error term ($qk$) does \emph{not} depend on $N$. Roughly speaking the lemma says, if we manage to cover some set of measure bigger than $1-\epsilon$ by relatively few Bowen $(N,\eta)$-balls, then a set of size $1-\epsilon'$ can also be covered
by relatively few Bowen $(Nk,\eta)$-balls.

The reader may wish to look now at the proof of Theorem \ref{max-entropy} in Subsection \ref{final-proof} to see how the above two lemmas are used in combination to imply the uniqueness of the measure of maximal entropy.

\subsection{Proof of Lemma~\ref{cover-entropy}}
In the proof we will need the notion of relative entropy for partitions: For two partitions
$\Pcr=\{S_1,\ldots,S_\ell\}$ and $\mathcal{Q}=\{Q_1,\ldots,Q_m\}$ of a probability space $(X,\mu)$
the relative entropy of $\Pcr$ given $\mathcal{Q}$ is defined by
\[
  H_\mu(\Pcr|\mathcal{Q})=-\sum_{i,j}\mu(S_i\cap Q_j)\log\frac{\mu(S_i\cap Q_j)}{\mu(Q_j)},
\]
and it is easy to see that it gives  the following additivity of entropy
\begin{equation}\label{additivity of entropy}
     H_\mu(\Pcr\vee\mathcal{Q})=H_\mu(\mathcal{Q})+H_\mu(\Pcr|\mathcal{Q}).
\end{equation}
We should also use the notation $\mathcal{P} (x)$ to denote the elements of the finite or countable partition $\mathcal{P}$ containing $x$.

\begin{proof}
 Let $\Pcr=\{Q,S_1,\ldots,S_\ell\}$ be a finite partition where $Q$ is the only
 unbounded set, all boundaries $\partial S_i$ are null sets which satisfy additionally
 $$
  \mu((\partial S_i) B_\kappa^{G})<C\kappa
 $$
 for some constant $C>0$ and all $\kappa>0$, and finally
 $h_\mu(T,\Pcr)>h_\mu(T)-\delta$. Here
  $$
    h_\mu(T,\Pcr)=\lim_{N\rightarrow \infty} \frac{H_{\mu}(\Pcr^{[-N,N]})}{2N+1}
  $$
 is the expression over which one needs to takes the supremum to define $h_\mu(T)$. Such a partition
 exists since (i) by the general theory of entropy $h_\mu(T)$ can be approximated by $h_\mu(T,\mathcal{P})$
 once $\mathcal{P}$ is a sufficiently fine partition, and (ii) one can find for every $x\in X$
 arbitrary small $r>0$ for which $\mu\bigl((\partial B_r(x))B_\kappa^{G}\bigr)<C\kappa$ for all $\kappa>0$
 (since for every $x$ the function $r \mapsto \mu(  B_r(x))$ is monotone increasing hence differentiable for a.e.\ $r$.)

 We claim that for most points $x \in X$
(we shall quantify this presently) it holds that
\begin{equation}\label{in Pcr}
\Pcr^{[-N,N]}(x) \supset xB_{N, 2\eta '} \qquad \text{for $\eta ' = \eta N^{-2}$},
\end{equation}
hence if $y \in xB_{N, \eta '}$, then $yB_{N, \eta '} \subset\Pcr^{[-N,N]}(x)$.
To show this, suppose $y=xh \not\in \Pcr^{[-N,N]}(x)$ for $h \in B _ {N, \eta '}$.  Then for some $n$ with $|n|\leq N$ the elements
$$
xa^n\mbox{ and }
xha^n
$$
belong to different elements of $\Pcr$. It follows that at least one of the elements $xa^n$
belong to $(\partial P) B_{2 \eta'}^{G}$ for some $P \in \Pcr$, $|n|\leq N$. Therefore, $x$ belongs to
\begin{equation}\label{boundary-set}
\bigcup_{n=-N}^N T^n \bigcup_{S \in\Pcr} (\partial S) B_{2\eta'}^{G}
\end{equation}
which has measure less than $2(2N+1)\ell C \eta N^{-2}\ll N^{-1}$. This proves the above claim.

 Roughly speaking $B_{N,\eta}$ has length $\eta$ in the direction of $A$ and $\eta e^{-N}$ along stable and unstable
 horocycle directions while $B_{N,\eta'}$ has $\eta N^{-2}$ and $\eta N^{-2}e^{-N}$ instead. From this one can
 easily deduce that one needs at most $\ll N^6$ many translates of $B_{N,\eta'}$ to cover $B_{N,\eta}$. Choose $f>\lim_{\epsilon\rightarrow 0}\liminf_{N\rightarrow\infty}\frac{\log BC(N,\epsilon)}{2N}$. Then
 %by the comment above regarding covering $B_{N,\eta}$ using translates of $B_{N,\eta'}$,
 for any
 $\epsilon>0$, there is some large $N\geq 1$ depending on $\epsilon$ such that the measure of the set in \eqref{boundary-set} is less than $\epsilon$,
 and moreover such that $1-\epsilon$ of the space
 can be covered by less than $e^{2Nf}$ many translates of the set $B_{N,\eta'}$.

 Say $y_1B_{N,\eta'},\ldots, y_kB_{N,\eta'}$
 (with $k\leq e^{2Nf}$) cover $X_1\subset X$ with $\mu(X_1)\geq 1-\epsilon$. If $x\in X_1$
 is not in the union in \eqref{boundary-set}. Since $x\in y_jB_{N,\eta'}$ for some $j$,
 it follows from \eqref{in Pcr} that
 $y_jB_{N,\eta'}\subset \Pcr^{[-N,N]}(y_j)$. In other words, it follows that
 $1-2\epsilon$ of the space can be covered by $e^{2Nf}$ elements of the partition $\Pcr^{[-N,N]}$.

  Let
  $P$ be the union of these partition elements and let $\mathcal{P}=\{P,X\setminus P\}\subset\sigma(\Pcr)$
  be the associated partition. Write $\mu_B=(\mu(B))^{-1}\mu|_B$ for the normalized restriction
  of the measure $\mu$ to a Borel set $B$. It follows now from \eqref{additivity of entropy} that
 \begin{multline*}
 H_\mu(\Pcr^{[-N,N]})=H_\mu(\mathcal{P})+H_\mu(\Pcr^{[-N,N]}|\mathcal{P})\\
  =H_\mu(\mathcal{P})+\mu(P)H_{\mu_P}(\Pcr^{[-N,N]})+\mu(X\setminus P)H_{\mu_{X\setminus P}}(\Pcr^{[-N,N]})\\
  \leq \log 2+2N f+4\epsilon N\ell
 \end{multline*}
 since the entropy of a partition with $K$ elements is at most $\log K$. For $N\rightarrow\infty$
 this shows that
 \[
  h_\mu(T)-\delta < h_\mu(T,\Pcr)\leq f+2\epsilon\ell,
 \]
 which implies the lemma since $\delta$ and $\epsilon$ were arbitrary.
 (Note that $\ell$ depends on $\delta$ but not on $\epsilon$.)
\end{proof}

\subsection{Proof of Lemma~\ref{cover a little cover a lot}}
We shall say a Bowen ball $y B _ {N, \eta}$ is \emph{injective} if the map $g \mapsto y g$ is injective on $B _ {N, \eta}$.
Let $\eta _ 0>0$ be such that $2 \eta _ 0$ is smaller than the length of any closed geodesic in $X$.
An easy compactness argument shows that if $\eta \leq \eta _ 0$ for any compact $F \subset X$ there is a $N _ 0$ so that if $N > N _ 0$ and $y \in F$ the Bowen ball $y B _ {N, \eta}$ is injective.
In the proof we shall also make use of shifted $(s,t;\eta)$-Bowen balls --- sets of the form $y B _{ s ,t; \eta}$ where $B _{ s ,t; \eta}: = \bigcap_ {i = s} ^ t a ^i B_\eta ^G a^{-i}$ and $(s,t;\eta)$ sub-Bowen balls which  are simply sets of the form $y B$ for some $B \subset B _ {s, t; \eta}$.
A shifted $(s,t;\eta)$-Bowen ball $y B _{ s ,t; \eta}$ (respectively, a $ (s, t; \eta)$ sub-Bowen ball $y B $) is injective if the map $g \mapsto yg$ is injective on $B _ {s,t; \eta}$ (or $B$).
We note the following important properties of shifted Bowen balls:
\begin{enumerate}[label=(Bowen-\arabic*).,align=left,leftmargin=*]
\item For any $s \leq t \leq r$, the intersection of an injective $(s, t; \eta)$ sub-Bowen ball with an injective $(t, r; \eta)$ sub-Bowen ball can be covered by at most q injective $(s, r; \eta)$ sub-Bowen balls;
\item For any $s \leq t \leq r$,  an injective $(s, t; \eta)$ sub-Bowen ball can be covered by at most $qe^{r-t}$ injective $(s, r; \eta)$ sub-Bowen balls.
\end{enumerate}

\begin{proof}[Proof of claims]
Both claims can easily be reduced to their special cases where $t=0$ and where we only consider Bowen balls
of the form $gB_{s,r;\eta}$ in $G$ instead of injective sub-Bowen balls in $X$.

For the proof of (Bowen-1) notice that there exists some $C>0$ so that
\begin{equation}\label{pos-bowen-claim}
 g_1B_{s,0;\eta} \subset g_1 B_{C\eta}^{U^+}B_{C\eta e^s}^{U^-}B_{C\eta}^A,
\end{equation}
where $B_r^H$ denotes the $r$-ball around the identity in a subgroup $H\subset\SL_2(\R)$.
Similarly,
\begin{equation}\label{pos-bowen-claim-2}
 g_2B_{0,r;\eta} \subset g_2 B_{C e^{-r}\eta}^{U^+}B_{C\eta}^{U^-}B_{C\eta}^A.
\end{equation}
We can now decompose each of the balls appearing on the right hand side of \eqref{pos-bowen-claim}--\eqref{pos-bowen-claim-2} into $\ll 1$ many balls with certain smaller radius and obtain that $g_1B_{s,0;\eta}\cap g_2B_{0,r;\eta}$ is the union of $\ll 1$ many sets of the form
$$
 O=(g_1 u^+_1B_{\eta/8}^{U^+}u_1^-B_{\eta e^s/8}^{U^-}a_1 B_{\eta/8}^A)\cap
  (g_2 u_2^+B_{\eta e^{-r}/8}^{U^+}u_2^-B_{\eta/8}^{U^-}a_2B_{\eta/8}^A).
$$
where $ u_1^+\in B_{C\eta}^{U^+},u_2^+\in B_{C\eta e^{-r}}^{U^+}, u_1^-\in B_{C\eta e^s}^{U^-},u_2^-\in B_{C\eta}^{U^-}, a_1,a_2\in B_{C\eta}^A$.
If $g\in O$ and $\eta_0$ is sufficiently small so that conjugation by an element of distance $C\eta_0$ does not increase the distance to the identity significantly, it follows that $O\subset g B_{(s,r;\eta)}$ which proves the first claim.

The second claim follows similarly by splitting the set $B_{s,0;\eta}$ as in \eqref{pos-bowen-claim}
into $\ll e^r$ many sets of the form
$$
O= g_1 u^+_1B_{\eta e^{-r}/8}^{U+}u_1^-B_{\eta e^s/8}^{U^-}a_1 B_{\eta/8}^A
$$
with $u^+\in B_{\ll\eta}^{U^+}$ and $u^-\in B_{\ll\eta e^s}^{U^-}$, and showing that for $g\in O$ we have $O\subset g B_{s,r;\eta}$.
\end{proof}

\begin{proof}[Proof of Lemma~\ref{cover a little cover a lot}]
Let $\eta\in(0,\eta_0)$ where $\eta_0$ is as defined above, and let $M$ be sufficiently large so that $\mu (X _ {\leq M}) > 1 - \epsilon / 2$ and similarly choose $M _ 1$  so that $\mu (X _ {\leq M _ 1}) > 1 - \epsilon _ 1 / 2$.
We require that $N$ be sufficiently large so that any $(N,\eta)$-Bowen ball $y B _ {N, \eta}$ intersecting $X _ {\leq M}$ is injective, and we choose $k_1$ so that a similar statement holds for any $(k_1N, \eta)$-Bowen ball intersecting $ X _ {\leq M _ 1}$.

\IGNORE{
Since the volume (according to Haar measure) of $B_{kN, \eta/2}$ is $ C \eta e ^ {-2kN}$ the injectivity statement above implies that for $k \geq k_1$ there can be at most $C' \eta^{-1} e ^ {2kN}$ disjoint $(kN, \eta/2)$-Bowen balls intersecting $X _ {\leq M _ 1}$. By maximality, the corresponding collection of $(kN, \eta)$-Bowen balls must cover $X _ {\leq M _ 1}$ and we get the simple estimate that for $k \geq k_1$
\begin{equation*}
BC_\eta(kN, \epsilon') \leq C' \eta^{-1} e ^ {2kN}.
\end{equation*}
%In particular, Lemma~\ref{cover a little cover a lot} is trivial if $B C (N, \epsilon)\geq e^{2N}$, hence we may safely assume that the opposite inequality $B C (N, \epsilon)< e^{2N}$ holds.
}

Let $\Xi$ be a collection of $(N, \eta)$-Bowen balls of cardinality $BC_\eta (N, \epsilon)$ covering a subset of $X$ with $\mu$ measure at least $1 - \epsilon$. Then
\begin{equation*}
\Xi ' = \left\{ B \in \Xi: B \cap X _ {\leq M} \neq \emptyset \right\}
\end{equation*}
has $\mu \left (\bigcup_ {B \in \Xi'} B \right) \geq 1 - \frac {3 \epsilon }{ 2}$.
Let $Y = \bigcup_ {B \in \Xi '} B$. By the pointwise ergodic theorem, there is a $k _ 2 \geq k _ 1$ and a subset $Y _ 1 \subset X _ {\leq M _ 1}$ of $\mu$-measure $\geq 1 - \frac {3 \epsilon _ 1 }{ 4}$ so that
points in $Y_1$ spend most of their time in $Y$ in the following sense:
\begin{equation}\label{equation: many times}
\frac {1 }{ 2 n} \sum_ {s = - n } ^ {n-1} 1 _ Y (T ^ s (y)) > 1 - 2 \epsilon \qquad \text{for all $n \geq k _ 2 N$ and $y \in Y _ 1$}.
.\end{equation}

To complete the proof of Lemma~\ref{cover a little cover a lot} we will show that for any $k\geq k_3$ there is a collection $\Xi _ 1$ of $(kN, \eta)$-Bowen balls covering $Y _ 1$ of cardinality
\begin{equation*}
\absolute {\Xi _ 1} \ll
  N (2q)^{k} BC_\eta (N, \epsilon) ^ {k(1-2 \epsilon)} e ^ {(4 \epsilon k + 4)N}
\end{equation*}
Let $c$ be the implied multiplicative constant. Then for large enough $q'$ (depending only on $q$ and some absolute constants above) we have $cN(2q)^{k}e^{4 N}\leq e^{q'k}$ for  all sufficiently large $k$ (where the bound is allowed to depend on $N$).
Therefore, the existence of $\Xi _ 1$ as above will establish the lemma.

Fix $k \geq k _ 2 $ and let $y\in Y_1$. We partition the finite orbit $\{T^{-kN}(y),\ldots, T^{kN-1}(y)\}$ into the $2N$
finite orbits of the form $\{T ^ {-kN+\ell}(y), T^{(-k+2)N+\ell}(y),\ldots,T^{(k-2)N+\ell}(y)\}$ for $\ell\in\{ 0,\ldots, 2N-1\}$.  By equation \eqref{equation: many times} there must for any $y \in Y _ 1$ exist some $\ell (y) \in \left\{ 0, \dots, 2N -1 \right\}$ so that
\begin{equation*}
\frac {1 }{  k} \sum_ {s=0} ^ {k -1} 1 _ Y (T ^ {(-k+2s)N +\ell (y) } (y)) \geq 1 - 2 \epsilon.
\end{equation*}

Let $L= {\lceil  (1-2\epsilon) k \rceil}$. It follows that there are $0 \leq t_1 < t_2 \dots< t_ L< k$
with $T^{(-k+2t_i)N+\ell(y)}(y)\in Y$. Furthermore, there exist injective $(N, \eta)$-Bowen balls $B _ 1, \dots, B _ L \in \Xi$ so that
\begin{equation*}
y \in \bigcap_ {i = 1} ^ L T^{-(-k+2t_i)N-\ell(y)}B_i.
\end{equation*}
Recall that $\Xi$ has $BC_\eta(N, \epsilon)$ many elements.
We now apply the
properties (Bowen-1) and (Bowen-2), and  we conclude that
the set of all $y \in Y_1$ with a given value of $\ell (y)$ and $t_1, \dots, t_L$ can be covered by
$$
 \ll BC_\eta(N, \epsilon)^{k(1-2\epsilon)+1} e^{4 N k\epsilon +2N}q^{k+1}
$$
injective $(kN,\eta)$-Bowen balls. Since there are at most $2N 2^{k}$ choices of $\ell (y)$ and $t_1, \dots, t_L$ we are done.
\end{proof}

\subsection {Proof of Theorem~\ref{max-entropy}}\label{final-proof}
We begin with the observation that the $\SL (2, \R)$-invariant measure $\mu _ X$ on $X$ achieves the upper bounds stated on the entropy, and moreover is ergodic under $T$. Let $\nu \neq \mu _ X $ be another $T$-invariant probability measure and without loss of generality we may assume that $\nu$ is  singular with respect to $\mu _ X$ (which is the case e.g.\ if $\nu$ is also ergodic), and let $\eta _ 0$ be as in the proof of Lemma~\ref{cover a little cover a lot}.

Let $f$ be a nonnegative, continuous, compactly supported function so that
\begin{equation} \label{for sandwich}
\int f \,d \mu _ X  < \int_ 0 ^ 1 dt \int f (xa_t) \,d \nu,
\end{equation} $R$ some real number strictly between the left-hand side and right-hand side of \eqref{for sandwich} and set
\begin{equation*}
Y _ T = \left\{ x: \frac1T \int_ 0 ^ T f (x a _ t) dt > R \right\}
.\end{equation*}
By construction $Y _ T$ is compact, and  (for $\epsilon > 0$ arbitrary) by the pointwise ergodic theorem if $T$ is large enough $\mu _ X (Y _ T) < \epsilon$ and $\nu (Y _ T) > 1 - \epsilon$. In fact, if $T$ is large enough, for any sufficiently large $N$ it holds that
\begin{equation} \label{Y sub T slightly thickened}
\mu _ X (Y_T B_{N, \eta_0}) <2 \epsilon
.\end{equation}
Fix such a $T$, and chose $N$ so that the \eqref{Y sub T slightly thickened} holds and moreover any $(N, \eta _ 0)$-Bowen ball intersecting $Y _ T$ is injective.

Now choose a maximal collection of disjoint $(N, \eta _ 0/2)$-Bowen balls intersecting $Y_T$. Each of these balls has $\mu _ X$ volume $\gg_{\eta_0}e^{-2N}$ (the implicit constant is independent of $\epsilon$ and $N$). In view of \eqref{Y sub T slightly thickened}, it follows that the cardinality of this collection is $\ll_{\eta_0} \epsilon e^{2N}$, and by maximality the corresponding collection of $(N, \eta _ 0)$-Bowen balls covers $Y _ T$.
As $\nu(Y_T)>1-\epsilon$  we obtain
 $BC_{\eta_0} (N, \epsilon, \nu) \ll_{\eta_0} \epsilon e ^ {2 N}$ (note that since we are simultaneously discussing two measures we have added $\nu$ to the notation $B C (\cdot)$).

Roughly speaking the above upper bound should lead  to $h_\nu(T)<1$ by using Lemma \ref{cover-entropy}:  most of the space with respect to $\nu$ is covered by relatively few, namely $\leq C \epsilon e^{2N}$, Bowen $(N,\eta)$-balls. However, as \eqref{cover entropy equation} first takes the limit as $N\to\infty$ this inequality is not directly implying $h_\nu(T)<1$. To overcome this we introduce an
$\epsilon'\in(0,\epsilon)$ and will use Lemma \ref{cover a little cover a lot}
to obtain the bound on the covering number for $\epsilon'$ and $kN$.
Indeed applying Lemma~\ref{cover a little cover a lot} we conclude that for any $\epsilon '\in(0,\epsilon)$ if $k$ is sufficiently large
\begin{multline*}
\log B C _ {\eta _ 0} (kN, \epsilon ', \nu) \leq k (1-2 \epsilon)
(2N+ \log (C\epsilon)) + 4 \epsilon kN + q k \\
\leq k (1-2 \epsilon)2N +\frac12k\log (C\epsilon) + 4 \epsilon kN + q k= 2Nk + \Bigl(q + \frac12\log(C \epsilon)\Bigr)k,
\end{multline*}
where we also assumed $\epsilon<1/4$ and $C\epsilon<1$.
Hence we obtain for any $\epsilon '\in(0,\epsilon)$ that
\begin{equation*}
\liminf_ {k \to \infty} \frac {1 }{ 2kN} \log B C _ {\eta _ 0} (kN, \epsilon ', \nu) \leq 1 + \frac {2q + \log( C\epsilon)}{4N}
.\end{equation*}
However, for sufficiently small $\epsilon$ the right hand side is $<1$. Hence by Lemma~\ref{cover-entropy} we get $h _ \nu (T)<1$. Therefore, $m_X$ is the only probability measure on $X$ with $h_{m_X}(T)\geq 1$.

\bibliographystyle{plain}
%\bibliography{ELMV2main}
\def\cprime{$'$}

\end{document}